\newcommand{\Hom}{\operatorname{Hom}\nolimits}
\renewcommand{\mod}{\operatorname{mod}\nolimits}
\newcommand{\id}{\operatorname{id}\nolimits}
\newcommand{\Ext}{\operatorname{Ext}\nolimits}
\newcommand{\rad}{\operatorname{rad}\nolimits}
\newcommand{\Z}{\operatorname{\mathbb{Z}}\nolimits}
\newcommand{\N}{\operatorname{\mathbb{N}}\nolimits}
\newcommand{\aut}{\operatorname{Aut}\nolimits}
\newcommand{\soc}{\operatorname{soc}\nolimits}
\newcommand{\Irr}{\operatorname{Irr}\nolimits}
\newcommand{\Sp}{\operatorname{Span}\nolimits}
\newtheorem{theo}{Theorem}[section]
\newtheorem{cor}[theo]{Corollary}
\newtheorem{lemma}[theo]{Lemma}
\newtheorem{prop}[theo]{Proposition}
\newtheorem{defi}[theo]{Definition}
\newtheorem{example}[theo]{Example}
\newtheorem{as}[theo]{Assumption}
\begin{document}
\baselineskip=15pt
\title[Euclidean components for self-injective algebras]{Euclidean components for a class of self-injective algebras}
\author{Sarah Scherotzke (Oxford)}
\address{Sarah Scherotzke \newline Mathematical Institute \\ University of Oxford \\ 24-29 St.\
Giles \\ Oxford OX1 3LB \\ United Kingdom}

\email{scherotz@maths.ox.ac.uk}

\keywords{Auslander-Reiten theory, $G$-transitive algebras }
\subjclass[2000]{16G70}

 \maketitle

\begin{abstract}We determine the length of composition series of projective modules
of $G$-transitive algebras with an Auslander-Reiten component of
Euclidean tree class. We thereby correct and generalize a result in \cite[4.6]{F2}.
 Furthermore we show that modules with certain length of composition series are periodic.
We apply these results to $G$-transitive blocks of the universal
enveloping algebras of restricted $p$-Lie algebras and prove that
$G$-transitive principal blocks only allow components with
Euclidean tree class if $p=2$. Finally we deduce conditions for a
smash product of a local basic algebra $\Gamma$ with a commutative
semi-simple group algebra to have components with Euclidean tree
class, depending on the components of the Auslander-Reiten quiver
of $\Gamma$.
 \end{abstract}

\section{introduction}
The stable Auslander-Reiten quiver of a finite-dimensional algebra can be viewed as part
of a presentation of the stable module category, and it is an important invariant which has many applications.
It is a locally finite graph, where the vertices correspond to the isomorphism classes of indecomposable modules.
Each connected component is isomorphic to $\Z[T]/\Gamma$ where $T$ is a tree, and $\Gamma$ is an
admissible group of automorphism. (See \cite{B}[4.15.6] for details).

For many self-injective algebras it is known that the
possibilities for $T$ are restricted, it can only be  Dynkin, or
Euclidean, or one of a few infinite trees (see \cite{W}, \cite{E},
\cite{ES}). In this paper we study algebras with Euclidean
components. Recently the study of self-injective algebras with
Euclidean Auslander-Reiten components has attracted much
attention, for example all self-injective algebras of Euclidean
type have this property (see the survey article \cite[Section
4]{Sk} ).

These have been also studied in the context of reduced enveloping
algebras by Farnsteiner, and we discovered that \cite[4.6]{F2} is
not correct. In Theorem \cite[4.6]{F2} Farnsteiner proves a
necessary conditions for certain blocks of universal enveloping
algebras $u(L,\chi)$ of a restricted $p$-Lie algebra $L$ with
$p>2$, to have an Auslander-Reiten component with Euclidean tree
class. Unfortunately one crucial step in the proof  is wrong. As
all other results in the case of $\tilde D_n$-tree class depend on
this step, we need a different proof and Theorem.

\medskip

In the first section we give
a proof for the more general setup of $G$-transitive blocks of
Frobenius algebras in any characteristic.

\smallskip

In the second section we apply the main result of the first
section to $G$-transitive blocks of $u(L,\chi)$. We show that some
of the results of Farnsteiner's paper remain true while others
need additional assumptions. We can show that a
$G$-transitive principal block of $u(L, \chi)$ does not have
Auslander-Reiten components of Euclidean tree class if $p>2$.

\smallskip

In the last section we determine conditions for the smash product
of a basic local algebra $\Gamma$ and a semi-simple commutative
group algebra to have Auslander-Reiten components of Euclidean
tree class depending on the tree class of components of $\Gamma$.

\medskip

Let $B$ be an indecomposable Frobenius algebra. We introduce for a group $G\subset
\aut(B)$ the $G$-transitive algebra $B$. This means that $G$ acts transitively on the set
 of simple modules in $B$. We denote by $l(P)$ the length of an indecomposable projective module $P$.
 We show in \ref{euc1}, \ref{euc2}, \ref{5} that the following holds for $G$-transitive algebras
that have Auslander-Reiten components of Euclidean tree class:

(1) All non-periodic Auslander-Reiten components are either
isomorphic to $\Z [\tilde A_{1,2}]$ or $\Z[ \tilde D_n]$ for $n$ odd and $n>5$.

(2) In the first case $l(P)=4$ and all indecomposable modules of length $0 \mod l(P)$ and $2 \mod l(P)$ are periodic.

(3) In the second case $l(P)=8$ and all indecomposable modules of length $4 \mod l(P)$ are periodic.

\smallskip

In \cite{F2} Farnsteiner introduces $G(L)$, the group of
group-like elements of $u^*(L)$ and shows that they can be
embedded into the automorphism group of $u(L,\chi)$. He proves
 for $G(L)$-transitive blocks over a
field of characteristic $p>2$ (see \cite[4.6]{F2}) that:

(1) All non-periodic Auslander-Reiten components are either
isomorphic to $\Z [\tilde A_{1,2}]$ or $\Z[ \tilde D_5]$. All indecomposable modules of length 2 are periodic.

(2) In the first case $l(P)=4$.

(3) In the second case $l(P)=8$.

For his results in the case of tree class $\tilde D_n$ he first
shows that $n=5$. As this step is wrong, a different proof was
needed for the more general setup. In the new proof we can show
that $n>5$ which contradicts Farnsteiner's result.

\smallskip

Additionally we show that that the number of non-periodic
components is equal to the number of isomorphism types of simple
modules in $B$.

\medskip

In the second section we can verify Farnsteiner's statement that
supersolvable algebras in characteristic $p>2$ do not have
Auslander-Reiten components of Euclidean tree class.
 An example by \cite[2.3]{E} shows that this statement is wrong for $p=2$.
 We also show in \ref{lie} that if $B$ is a $G$-transitive block of $u(L,\chi)$ that has
an Auslander-Reiten component of Euclidean tree class, then $p=2$
or the dimension of a simple module in $B$ is divisible by 2.

\medskip

In the last section we introduce the smash product of a basic local algebra $\Gamma$
and a semi-simple group algebra $kG$, where $G\subset \aut(\Gamma)$ is an abelian group.
 We show that they are a special cases of $G$-transitive algebras and describe how to construct the Gabriel
 quiver of the smash product
from the Gabriel quiver of $\Gamma$.

For a finite-dimensional algebra $A$, we introduce $G(A)$ the free
abelian group with basis the isomorphism types of indecomposable
modules. We have a bilinear form $(-,-)_A$ induced by $\dim_k
\Hom_A(-,-)$ defined on $G(A)$. We develop some properties of this
bilinear form in \ref{dual}, \ref{sum}.

We use these general results to show in \ref{smash euc} that Auslander-Reiten sequences of
the smash product restricted to $\Gamma$ are sums of Auslander-Reiten sequences
in $\Gamma$ that are twists of each other. Using this result, we
prove that the smash product has an Auslander-Reiten component of
tree class $\tilde D_n$ if and only if $\Gamma$ has an
Auslander-Reiten component of tree class $\tilde D_n$. If the
smash product has an Auslander-Reiten component of tree class
$\tilde A_{1,2}$, then so does $\Gamma$. The converse is not true,
as we show by providing a counter example. We can in this case restrict the component to $Z[\tilde A_{12}]$ and $\Z[\tilde A_{n}]$ for certain $n \in \N$ (see \ref{12}).

\bigskip

I would like to thank my Supervisor Karin Erdmann for her comments which have improved this a paper considerably. I would also like to thank
the referee and the editor Andrzej Skowro\'nski for their useful suggestions.

\section{Euclidean components for $G$-transitive Algebras}
For general background on Auslander-Reiten theory we refer to
\cite{A} or \cite{B}. Let $F$ be a field, $B$ be an indecomposable
Frobenius algebra over $F$ with Nakayama automorphism $\nu$ and
let $P$ be a projective indecomposable $B$-module. We denote by
$\tau$ the Auslander-Reiten translation of $B$. As $B$ is a
Frobenius algebra, we have $\tau\cong \Omega^2 \nu$ by
\cite[4.12.9]{B}. Furthermore we denote by $B$-$\mod$ the category
of finite-dimensional left $B$-modules.

Let $ \alpha \in \aut(B)$ and $M, \ N \in B$-$\mod$. Then we denote by $M_{\alpha}$ the module which is isomorphic to $M$
 as an abelian group and the action of $B$ on $M_{\alpha}$ is given by $b.m:=\alpha (b)m$ for all $b\in B$ and $m
 \in M$. We denote by $l(M)$ the length of composition series of $M$, by $c(M)$ its complexity and by $\Irr(M,N)$ the space of irreducible maps from $M $ to
$N$.

\medskip

Let $G$ be a subgroup of $\aut(B)$. We call $B$ a $G$-transitive
block, if for any two simple left $B$-modules $V$ and $W$ there is
an element $g \in G$ such that $V_g\cong W$. We denote by $T_s(B)$
the stable Auslander-Reiten quiver of $B$.

\smallskip

>From now on we assume that $B$ is $G$-transitive.

\smallskip

We first prove a result on the length of the modules appearing at the end of an Auslander-Reiten sequence.
\begin{lemma}\label{length} Let $M$ be an indecomposable, non-projective $B$-module. Then $l(\tau( M))=l (M)+n l(P) $ for some $n \in \Z$.
\end{lemma}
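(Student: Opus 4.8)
The plan is to reduce the statement to the syzygy functor $\Omega$ via the quoted isomorphism $\tau\cong\Omega^2\nu$, and to exploit two elementary observations: that $G$-transitivity forces all indecomposable projective $B$-modules to have one common composition length $l(P)$, and that twisting a module by an algebra automorphism does not change its composition length.

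First I would record the behaviour of length under twists. If $\alpha\in\aut(B)$ and $N\in B$-$\mod$, then a subspace $U\subseteq N$ is a $B$-submodule of $N_\alpha$ if and only if it is a $B$-submodule of $N$, because $\alpha$ is bijective; hence $N$ and $N_\alpha$ have the same submodule lattice, their composition factors differ only by the twist $V\mapsto V_\alpha$, and in particular $l(N_\alpha)=l(N)$. Applying this to a simple module $V$ gives $P(V)_g\cong P(V_g)$ with $l(P(V)_g)=l(P(V))$; since $B$ is $G$-transitive, every simple $B$-module is of the form $V_g$, so all indecomposable projectives have the same length $l(P)$, and consequently $l(Q)\in l(P)\Z$ for every projective $B$-module $Q$.

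Next I would pass through the syzygy. For $M$ indecomposable and non-projective the minimal projective cover gives a short exact sequence $0\to\Omega M\to Q\to M\to 0$ with $Q$ projective, whence $l(\Omega M)=l(Q)-l(M)\equiv -l(M)\pmod{l(P)}$ by additivity of length. Applying the same argument to $\Omega M$ (its projective cover again contributes a multiple of $l(P)$) yields $l(\Omega^2 M)\equiv -l(\Omega M)\equiv l(M)\pmod{l(P)}$. Since $B$ is Frobenius we have $\tau M\cong(\Omega^2 M)_\nu$, and twisting by $\nu$ preserves length by the first step, so $l(\tau M)=l(\Omega^2 M)=l(M)+n\,l(P)$ for some $n\in\Z$, as required. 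The only point that genuinely needs care — and the place where the hypothesis is used — is the well-definedness of the single integer $l(P)$: it is $G$-transitivity that makes $l(Q)$ a multiple of $l(P)$ for all projectives $Q$ simultaneously. The remaining ingredients are the additivity of length on short exact sequences and the formula $\tau\cong\Omega^2\nu$ already recorded in the text.
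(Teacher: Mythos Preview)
Your proof is correct and follows essentially the same route as the paper's: use $G$-transitivity to see that every projective has length a multiple of $l(P)$, deduce $l(\Omega^2 M)\equiv l(M)\pmod{l(P)}$, and then invoke $\tau\cong\Omega^2\nu$ together with the invariance of length under twists. The paper simply states these steps more tersely; your additional justification that twisting by an automorphism preserves the submodule lattice (hence composition length and projective covers) makes explicit what the paper takes for granted.
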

\begin{proof}
As $B$ is $G$-transitive all projective indecomposable modules of $B$ have the same length.
 Therefore the length of projective covers of left $B$-modules are multiples of $l(P)$.
 It follows that for all $i\in \N$ there is a $n\in \Z$ such that $l(\Omega^i(M))=l(M)+nl(P)$.
  Therefore $l (\tau(M))=l(\Omega^2(M_{\nu^{-1}}))= l(M_{\nu^{-1}})+nl(P)=l(M)+nl(P)$ for some $n\in \Z$.
\end{proof}

The next lemma proves a condition which ensures that all indecomposable length 2 modules of a block have complexity one.

\begin{lemma}\label{period} Suppose $P$ has length 4. Then
every indecomposable $B$-module of length 2 has complexity one.
\end{lemma}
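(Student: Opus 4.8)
The plan is to prove that every syzygy of a length-$2$ indecomposable module is again an indecomposable module of length $2$; this bounds the $F$-dimensions of the terms of a minimal projective resolution and hence forces complexity one.

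\emph{First syzygy.} Let $M$ be indecomposable of length $2$. Since $B$ is $G$-transitive, all indecomposable projectives have the same length (as noted in the proof of \ref{length}), namely $l(P)=4$; in particular $M$ is not projective. An indecomposable module of length $2$ cannot be semisimple, so $M/\rad M$ is simple and the projective cover $P(M)$ of $M$ is a single indecomposable projective, of length $4$. From $0\to\Omega(M)\to P(M)\to M\to 0$ we read off $l(\Omega(M))=l(P(M))-l(M)=4-2=2$.

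\emph{Iteration.} Because $B$ is a Frobenius algebra, $\Omega$ carries indecomposable non-projective modules to indecomposable non-projective modules: it induces a self-equivalence of the stable module category of $B$, and a syzygy formed via a projective cover lies in the radical of the cover, hence has no projective summand. Thus $\Omega(M)$ is indecomposable, non-projective, of length $2$, and the previous step applies to it; inductively $\Omega^n(M)$ is indecomposable, non-projective, of length $2$ for all $n\ge 0$, with simple top, so the $n$-th term of the minimal projective resolution of $M$ is $P(\Omega^n(M))$, an indecomposable projective of length $4$.

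\emph{Conclusion.} Every term of the minimal projective resolution of $M$ therefore has $F$-dimension at most $4\max_V\dim_F V$, the maximum taken over the simple $B$-modules, a bound independent of $n$; hence $\dim_F\Omega^n(M)$ is bounded. Consequently $c(M)\le 1$, while $c(M)\ge 1$ since $M$ is non-projective over the self-injective algebra $B$, so $c(M)=1$. (The same counting in fact shows that $\{\Omega^n(M)\}_{n\ge0}$ meets only finitely many isomorphism classes, so $M$ is $\Omega$-periodic.) There is no serious obstacle here; the two points needing a little care are that $\Omega$ preserves indecomposability — which is the only place self-injectivity is genuinely used — and the routine equivalence between "bounded syzygy dimensions together with infinite projective dimension" and "complexity one". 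Note that $G$-transitivity enters only to guarantee that the projective covers occurring all have length $4$.
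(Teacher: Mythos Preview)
Your argument is correct and is essentially the paper's own proof: show that an indecomposable length-$2$ module has simple top, hence indecomposable projective cover of length $4$, so its syzygy again has length $2$; iterate to get bounded syzygies and hence complexity one. You simply spell out more carefully why $\Omega$ preserves indecomposability (via the stable equivalence for Frobenius algebras) and why the top is simple.

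One point does need correction: your parenthetical claim that ``the same counting shows $\{\Omega^n(M)\}_{n\ge 0}$ meets only finitely many isomorphism classes, so $M$ is $\Omega$-periodic'' is false. Bounded length does not force finitely many isomorphism types. Indeed, the paper immediately follows this lemma with a counterexample: for $A_q=F\langle x,y\rangle/(x^2,y^2,xy-qyx)$ with $q$ not a root of unity (a local, hence trivially $G$-transitive, Frobenius algebra with $l(P)=4$), the length-$2$ modules $M_\gamma$ satisfy $\Omega(M_\gamma)\cong M_{\gamma q}$, so all syzygies have length $2$ yet $M_\gamma$ is never $\Omega$-periodic. This is precisely why the paper introduces condition~(C) separately. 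Drop the parenthetical and your proof matches the paper's.
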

\begin{proof}Let $M$ be an indecomposable $B$-module of length 2. The module $M$ has a simple top and therefore an indecomposable projective cover.
 Then $\Omega(M)$ is also an indecomposable length 2 module and has therefore an indecomposable projective cover. If follows that $l (\Omega^n(M))=2$ for all $n \in \N$. Therefore the complexity of $M$ is one.

\end{proof} In general not every module of a Frobenius algebra with complexity 1 is periodic. In particular it does not need to hold for an algebra with Auslander-Reiten component
 of Euclidean tree class, as is shown in the next
\begin{example}\cite{S}
Let $A_q:=F\langle x,y \rangle /(x^2,y^2, xy-qyx)$ where $q\not =0$ and where
$q$ is not a root of
unity. Let $M_{\gamma}=\Sp \{v, xv\}$ be the two-dimensional module with
$yv=\gamma xv$ for $\gamma \in F^*$. The projective cover of
$M_{\gamma}$ is given by $\pi:A_q \to M_{\gamma}$ with $\pi(1)=v$. Then
$\Omega(M_{\gamma})=\Sp\{ xy, y-\gamma x \}\cong  M_{\gamma q}$.  As $q$ is not
a root of unity we have $\Omega^k(M_{\gamma})\not \cong M_{\gamma}$ for
all $k \in \N $. Therefore $M_{\gamma}$ is not periodic but has complexity one.
Furthermore the Auslander-Reiten component containing the simple module is isomorphic to $\Z[\tilde A_{12}]$.
\end{example}

In the case of Auslander-Reiten components with Euclidean tree
class, we know that the simple modules are non-periodic.

\begin{lemma}\label{simple}
Suppose that $T_s(B)$ has a
component $\theta$ of Euclidean tree class. Then all simple
modules are non-periodic and lie in an Auslander-Reiten component
isomorphic to $\theta$.
\end{lemma}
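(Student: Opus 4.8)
The plan is to exploit $G$-transitivity to reduce the problem to a single simple module, and then use the structure theory of stable Auslander–Reiten components of Euclidean tree class together with Lemma \ref{length}. First I would observe that the operations $M \mapsto M_g$ for $g \in G$ permute the vertices of $T_s(B)$: twisting by an automorphism sends Auslander–Reiten sequences to Auslander–Reiten sequences, hence is an automorphism of the stable AR quiver, and it commutes with $\tau$ up to the same twist (since $\tau \cong \Omega^2\nu$ and twisting commutes with $\Omega$). Consequently, if one simple module is periodic then so are all of them (they are all of the form $V_g$), and they all lie in components isomorphic to the component of any fixed simple module. So it suffices to show that \emph{some} simple module is non-periodic and lies in a component isomorphic to $\theta$.

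Next I would use the hypothesis that $\theta$ has Euclidean tree class. A component of Euclidean tree class contains no periodic modules at all: by the theory of Riedtmann/Happel–Preiser–Ringel type results (a component $\Z[T]/\Gamma$ with $T$ Euclidean is infinite and contains no $\tau$-periodic vertex), every module in $\theta$ is non-periodic. In particular, the number of non-periodic components is positive. The key point is then to rule out the possibility that \emph{every} simple module lies in a periodic component: if that were the case, then since $G$ permutes the simples transitively and permutes components, every simple would lie in a periodic component, and I would derive a contradiction. The mechanism I expect to use is a counting or length argument: by Lemma \ref{length}, $l(\tau M) \equiv l(M) \bmod l(P)$ for every non-projective indecomposable $M$, so the length class modulo $l(P)$ is constant along $\tau$-orbits and indeed constant along connected components of $T_s(B)$ (irreducible maps change length by bounded amounts, but more to the point the component is generated from a $\tau$-orbit by AR sequences). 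Then I would compare the component $\theta$: the simple modules have length $1$, so $1 \bmod l(P)$ must be a length value occurring in some component, and I would argue this forces the simples into $\theta$ (or an isomorphic component) rather than into a periodic component.

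The main obstacle I anticipate is precisely this last step: connecting the abstract statement "simples are non-periodic and lie in a copy of $\theta$" to the concrete invariants. One clean route is: a periodic component of a Frobenius algebra is a tube $\Z[A_\infty]/(\tau^r)$, and the modules on the mouth of such a tube together with the additivity of lengths would let me bound the lengths of all modules in the component below by the lengths of the quasi-simple modules; I would then want to show that a simple module, being of minimal length $1$, cannot sit in the interior of a tube unless it is itself quasi-simple, and a quasi-simple periodic simple module would have $\Omega^2$-period giving a self-extension structure incompatible with $B$ being $G$-transitive with a Euclidean component present (one can use that some projective has length $l(P)$ and that a periodic simple would force, via Lemma \ref{length} and the shape of the tube, every module in $\theta$ to have length $\equiv 1 \bmod l(P)$, contradicting that $\theta$ being infinite of Euclidean type contains modules of unboundedly many length classes, or contradicting the presence of the projective-free structure). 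The safest presentation is to cite the known classification of possible tree classes for Frobenius algebras (Webb, Erdmann) to pin down that a simple cannot be periodic when a Euclidean component exists, and then invoke $G$-transitivity to propagate the conclusion to all simples and to identify their components with $\theta$ up to isomorphism.
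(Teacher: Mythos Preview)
Your reduction via $G$-transitivity is correct, but the core of your argument has a genuine gap. You try to rule out ``all simples periodic'' by a length/tube argument, yet this never closes: the claim that the length class $\bmod\ l(P)$ is constant along a component is false (it is only constant along $\tau$-orbits, and in a Euclidean component different $\tau$-orbits typically carry different length classes, as the paper exploits later in Theorems \ref{euc1} and \ref{euc2}). The subsequent suggestion that a periodic simple would force ``unboundedly many length classes'' in $\theta$ is also off, since there are only finitely many classes modulo $l(P)$. Your fallback to ``cite the known classification'' is not a proof.

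The idea you are missing is direct and short: a component of Euclidean tree class necessarily has a projective indecomposable $P$ attached to it (this is Webb \cite[2.4]{W} together with \cite[IV, 5.5]{A}), so $P/\soc P$ lies in $\theta$. Now apply $\Omega$, which is an automorphism of the stable quiver sending components to isomorphic components; since $\Omega(P/\soc P)=\soc P$ is simple, that simple module lies in a component isomorphic to $\theta$ and is therefore non-periodic. Then your $G$-transitivity step propagates the conclusion to all simples. The side remark that $l(P)\ge 4$ (else $P$ is uniserial, contradicting the Euclidean tree class) is needed only to guarantee the setup is nontrivial.
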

\begin{proof} As $\theta$ has Euclidean tree class it is attached to a
projective indecomposable module $P$ by \cite[2.4]{W} and
\cite[IV, 5.5]{A}.  If $l(P)< 4$, then $l(P)$ is uniserial which is a
contradiction to the Euclidean tree class by \cite[4.16.2]{B}. As
$P$ is attached to $\theta$ the indecomposable module $P/\soc P$
lies in $\theta$. The map induced by $\Omega$ restricted to
$\theta$ is an isomorphism. Therefore $\Omega(P/\soc P)=\soc P$ is
contained in a component isomorphic to $\theta$. So $\soc (P) $ is
not periodic. As $B$ is $G$-transitive, all simple modules are
non-periodic and lie in components isomorphic to $\theta$.
\end{proof}

We define certain stable graph automorphisms for $G$-transitive
blocks with an Auslander-Reiten component $\theta$ of Euclidean tree class. Note that by \cite[2.4]{W} and
\cite[IV, 5.5]{A}, there is at least one projective indecomposable module
$P$ attached to $\theta$.
Those maps have been defined in the proof of \cite[4.6]{F2}
similarly.

\begin{defi}\label{phi}

Suppose that the stable Auslander-Reiten quiver $T_s(B)$ has a component
$\theta$ of Euclidean tree class. Define $\phi_g:T_s(B) \to T_s(B)$ by $ M
\mapsto  \Omega(M_{g})$. Then $ \phi_g$ is a stable graph
isomorphism. Define  for any $g \in G$ the map $A_g: T_s(B)
\to  T_s(B)$, by $M \mapsto M_g$. We denote by $\theta_g$ the
component of $T_s(B)$ which is the image of $A_g(\theta)$.

For the rest of the section we fix for any component $\theta$ with
Euclidean tree class a projective indecomposable module $P$ that
is attached to $\theta$ and an element $g \in G$ such that $\phi:=
\phi_g|_{\theta} $ is an automorphism of $\theta$. Furthermore let
$S:=\phi(P/\soc P)$.
\end{defi}
Also $\phi_g|_{\theta}$ is an automorphism of
$\theta$ if and only if $ P_{g^{-1}} $ is attached to
$\Omega(\theta) $. We can see this as follows. If $P_{g^{-1}}$ is attached to $\Omega(\theta)$, then $A_g$ induces an isomorphism from
$\Omega(\theta) $ to $\theta$ and $\phi_g|_{\theta}$ is an automorphism.

Suppose $\phi_g|_{\theta}$ is an automorphism. Then $A_{g^{-1}}$ induces an isomorphism from $\theta$ to $\Omega(\theta)$.
 As $P$ is attached to $\theta$, $P_{g^{-1}}$ is attached to $\Omega(\theta)$.

Note that $S$ is a simple module that belongs to
$\theta$.

\smallskip

First we need to show that the twisting action of $\nu$ commutes
with the twisting action of any automorphism of $B$.
\begin{lemma}Let $A$ be a Frobenius algebra. For all $g \in \aut (A)$ and $A$-modules $M$ we have $M_{\nu g}\cong M_{g \nu}$.
\end{lemma}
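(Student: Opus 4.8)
The plan is to recall that for a Frobenius algebra $A$ the Nakayama automorphism $\nu$ is characterized (up to inner automorphisms) by the relation between the two $A$-module structures on the $F$-dual $A^* = \Hom_F(A,A)$: one has ${}_{A}A^*{}_{A} \cong {}_{\nu}A_{\id}$ as bimodules, or equivalently $\nu$ is defined via a nondegenerate associative bilinear form $\langle -,-\rangle$ on $A$ by $\langle a,b\rangle = \langle b,\nu(a)\rangle$ for all $a,b\in A$. The key point is that $\nu$ is intrinsic to the algebra structure, so any $g\in\aut(A)$ transports the form, and uniqueness of $\nu$ up to inner automorphism forces a compatibility; but since we want an isomorphism of modules rather than an equality of automorphisms, the inner-automorphism ambiguity causes no trouble.

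Concretely, first I would observe that for any $g\in\aut(A)$, the automorphism $g^{-1}\nu g$ is again a Nakayama automorphism for $A$: if $\langle-,-\rangle$ realizes $\nu$, then $\langle a,b\rangle' := \langle g(a),g(b)\rangle$ is again a nondegenerate associative form, and a direct check shows it realizes $g^{-1}\nu g$. Since the Nakayama automorphism is unique up to an inner automorphism of $A$, there is $u\in A^\times$ with $g^{-1}\nu g = \mathrm{inn}_u \circ \nu$, where $\mathrm{inn}_u(a) = uau^{-1}$; equivalently $\nu g = g\,\mathrm{inn}_u\,\nu = \mathrm{inn}_{g(u)}\, g\nu$. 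Then for any $A$-module $M$ I would use the standard fact that twisting by an inner automorphism yields an isomorphic module: $M_{\mathrm{inn}_u}\cong M$ via $m\mapsto um$ (one checks this intertwines the two actions). Combining, $M_{\nu g} = M_{\mathrm{inn}_{g(u)} g\nu} = (M_{g\nu})_{\mathrm{inn}_{g(u)}}\cong M_{g\nu}$, which is the claim after noting the order-of-composition convention for twisting ($M_{\alpha\beta} = (M_\beta)_\alpha$ or its reverse, consistently applied).

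The main obstacle is bookkeeping: one must fix a convention for how $\aut(A)$ acts on modules by twisting (in particular whether $(M_\alpha)_\beta = M_{\alpha\beta}$ or $M_{\beta\alpha}$, as defined in the excerpt by $b.m = \alpha(b)m$), and verify that the inner-automorphism twist is trivial up to isomorphism in exactly the direction needed. I would also need to make sure the uniqueness-up-to-inner statement for the Nakayama automorphism is invoked in a form valid over an arbitrary field and for a not-necessarily-basic Frobenius algebra; this is standard (it follows from the fact that two $A$-bimodule isomorphisms ${}_{A}A^*{}_{A}\to{}_{\nu}A_{\id}$ and ${}_{A}A^*{}_{A}\to{}_{\nu'}A_{\id}$ differ by an automorphism of the bimodule ${}_{\nu}A_{\id}$, and such automorphisms are exactly right multiplications by central-up-to-$\nu$ units, giving an inner correction). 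Once the conventions are pinned down, the argument is short; alternatively, one can avoid inner automorphisms entirely by working with the canonical bimodule isomorphism $A^*\cong{}_{\nu}A$ and chasing the functor $M\mapsto A^*\otimes_A M$, but the inner-automorphism route is cleaner to write.
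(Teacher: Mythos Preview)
Your proposal is correct and follows essentially the same route as the paper: transport the Frobenius form along $g$ to see that $g^{-1}\nu g$ is again a Nakayama automorphism, invoke uniqueness of $\nu$ up to inner automorphisms, and then use that twisting by an inner automorphism gives an isomorphic module. The only cosmetic difference is that the paper proves the uniqueness-up-to-inner statement from scratch (comparing two bilinear forms via an element $u$ with $\nu_1 = C_u\circ\nu$) rather than citing it, and it phrases the final step as $M_\nu \cong M_{C_u\circ\nu}$ (via $m\mapsto u^{-1}m$) and then substitutes, whereas you rewrite $\nu g = \mathrm{inn}_{g(u)}\, g\nu$ before twisting; these are the same computation.
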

\begin{proof}Let $(-,-)$ and $\{-,-\}$ be two associative non-degenerate bilinear forms, $\nu$ and $\nu_1$ the corresponding Nakayama automorphisms and
let $f:=(-,1)$ and $f_1:=\{-,1\}$ be the corresponding linear
forms. Then $\pi:A \to A^*, a \mapsto af$, and $\pi_1: A \to A^*,
a \mapsto af_1$, are $B$-module isomorphisms.
 Therefore there exist $x,y \in A$ such that $xf=f_1$ and $yf_1=f$.
  It follows that $x=y^{-1}$.  Set $u:=\nu(x).$ Then the following equation holds:
   $\{a,b\}=(ab,x)=(a,bx)=(\nu(b)u,a)=\{\nu(b)u,ax^{-1}\}= \{u^{-1}\nu(b)u,a\}$
 for all $a,b \in A$. Let $C_u: A\to A$, $a\to u^{-1} a u$ for all $a\in A$. Then  $\nu_1= C_u \circ \nu $.

\smallskip

Let $g \in \aut(A)$, then $\{-,-\}:=(-,-)\circ(g\times g)$ is a
associative non-degenerate bilinear form. It has Nakayama
automorphism $g^{-1}\circ \nu\circ g$ as the following holds:
$\{a,b\}=(g(a),g(b))=(\nu(g(b)), g(a))=\{g^{-1}(\nu(g(b))),a\}$
for all $a,b \in A$.  By the first part there exists an invertible
element $u \in A$ such that $g^{-1}\circ \nu\circ g=C_u \circ \nu
$. Therefore $M_{g\nu g^{-1}}\cong M_{C_u \circ \nu}$ for all left
$B$-modules $M$. But $M_{\nu} \cong M_{C_u \circ \nu}$ via the
automorphism $\phi: M_{\nu} \to M_{C_u \circ \nu }, m\mapsto
u^{-1}m$ and therefore we conclude $ M_{g \nu} \cong M_{\nu g}$.
\end{proof}

\section{Restrictions on tree classes}
We say for the rest of the article that the algebra $A$ satisfies
(C) if

\medskip

{\it Every module with complexity $1$ is $\Omega$-periodic }

\medskip

We say an algebra satisfies $(C')$ if

 \medskip

{\it Every module with complexity $1$ is $\Omega$ and $\tau$-periodic }

\medskip
Note that if $\nu$ has finite order and (C) holds then (C') is also true.
 We introduce the following
\begin{as}\label{as}
For $B$, we assume that all elements in $G$ have finite order.
Furthermore we assume that $B$ satisfies (C') and that $T_s(B)$ has a component $\theta$ of
Euclidean tree class. Let $P$ be a projective indecomposable module attached to $\theta$.
\end{as}

We have the following condition for the existence of a non-periodic
indecomposable module of length 3.

\begin{lemma} \label{uni} Suppose $B$ satisfies $(C)$. If all indecomposable modules of length 2 have complexity one, then all indecomposable modules of length 3 are non-periodic. Also there is no uniserial module of length 3. \end{lemma}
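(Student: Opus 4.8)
The plan is to split an indecomposable module $M$ of length $3$ according to its Loewy structure, embed it in a short exact sequence involving a simple module and an indecomposable module of length $2$, and then use subadditivity of complexity; the non-existence of a uniserial module of length $3$ I would deduce at the end from the shape of the Auslander--Reiten component together with \cite[4.16.2]{B}. First I would record the possible shapes of $M$. Since $M$ is indecomposable of length $3$, $M/\rad M$ has length $1$ or $2$. If $M/\rad M$ is simple, then $\rad M$ has length $2$, and either $\rad M$ is uniserial -- in which case the only submodules of $M$ are $0\subset\soc M\subset\rad M\subset M$, so $M$ is uniserial -- or $\rad M$ is semisimple, so $M$ is local with $\rad M=S\oplus S'$. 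If $M/\rad M$ has length $2$, then $\rad M$ is simple, and one checks (otherwise a simple summand of $\soc M$ not contained in $\rad M$ would map isomorphically onto a summand of $M/\rad M$ and split off, contradicting indecomposability) that $\soc M=\rad M$ and $M/\soc M=M/\rad M$ is semisimple of length $2$. In each of the three cases $M$ sits in a short exact sequence $0\to A\to M\to C\to 0$ with one of $A,C$ simple and the other an \emph{indecomposable} module of length $2$: take $A=\rad M$, $C=M/\rad M$ in the uniserial case; $A=S$, $C=M/S$ (uniserial of length $2$) in the local case; and $A$ the preimage in $M$ of one simple summand of $M/\rad M$ (again uniserial of length $2$), $C$ the other summand, in the last case.

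Next I would run the complexity argument. By hypothesis and $(C)$, every indecomposable module of length $2$ has complexity at most $1$. Every simple module $T$ has one and the same complexity $c_0:=c(T)$, since $G$ acts transitively on the simples and twisting by an automorphism preserves minimal projective resolutions; moreover $c_0\ge 2$, because the simple modules are non-periodic by Lemma~\ref{simple} while any $\Omega$-periodic module has bounded Betti numbers, hence complexity at most $1$. Applying the standard subadditivity $c(X)\le\max\{c(Y),c(Z)\}$ (for $\{X,Y,Z\}=\{A,M,C\}$) to the sequence above, with $X$ the simple term among $A,C$, gives $c_0\le\max\{1,c(M)\}$, so $c(M)\ge c_0\ge 2$. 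Since $l(P)\ge 4$ (as in the proof of Lemma~\ref{simple}), $M$ is not projective, and $c(M)\ge 2$ forces $M$ to be non-periodic. This settles the first assertion.

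For the second assertion, suppose $U$ is uniserial of length $3$. By the first part $U$ is non-periodic, so its component $\Theta$ of $T_s(B)$ is not a tube. As $\Theta$ contains a uniserial module, the structure result \cite[4.16.2]{B} already invoked in Lemma~\ref{simple} forces $\Theta\cong\Z A_{\infty}$, with an indecomposable projective $P'$ attached at its boundary. Now I would combine Lemma~\ref{length}, which gives $l(\tau^{n}U)\equiv l(U)=3\pmod{l(P)}$ for all $n$, with $l(P)\ge 4$ and the position of $P'$ on $\Theta$: since $P'/\soc P'$ and $\rad P'$ lie in the same $\tau$-orbit as $U$ and have lengths $l(P)-1$ and $l(P)-1$ respectively (and the module occurring in the Auslander--Reiten sequence through them has length $l(P)-2$), the congruence $l(P)-1\equiv 3\pmod{l(P)}$ forces $l(P)=4$, whence that Auslander--Reiten sequence exhibits an indecomposable module of length $2$ lying on $\Theta$. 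Such a module is either simple -- impossible, as simple modules lie in Euclidean components by Lemma~\ref{simple} -- or has complexity at most $1$ and is therefore $\Omega$-periodic by $(C)$ -- impossible, since $\Theta\cong\Z A_{\infty}$. This contradiction rules out a uniserial module of length $3$.

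The shape analysis and the complexity estimate (the first assertion) are routine; the main obstacle is the last step. One must extract from \cite[4.16.2]{B} the exact constraint it puts on a non-periodic component containing a uniserial module, and then control the composition-length function on a $\Z A_{\infty}$ component with an attached projective carefully enough -- using Lemma~\ref{length} and positivity of length -- to be sure that such a component necessarily contains an indecomposable module of length $1$ or $2$, which is then excluded by Lemma~\ref{simple} respectively by $(C)$. Getting the bookkeeping around the attached projective and the residue of $l(P)$ right is the delicate point.
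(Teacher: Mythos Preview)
Your argument for the first assertion is correct and is essentially the paper's argument: fit $M$ into a short exact sequence with a simple module and an indecomposable length-$2$ module, then use subadditivity of complexity together with Lemma~\ref{simple}.

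Your argument for the second assertion, however, has a genuine gap, and the paper proceeds very differently. The reference \cite[4.16.2]{B} does \emph{not} say that a non-periodic component containing a uniserial module must be $\Z A_\infty$; in the paper it is used only to say that periodic modules live in tubes (hence not in Euclidean components) and that a Nakayama algebra has no Euclidean components. So your deduction that $\Theta\cong\Z A_\infty$ is unfounded. Even granting that shape, your claim that $P'/\soc P'$ and $\rad P'$ lie in the \emph{same} $\tau$-orbit as $U$ is unjustified: a $\Z A_\infty$ component has infinitely many $\tau$-orbits, and there is no reason the attached projective should sit on the orbit of $U$. The subsequent congruence $l(P)-1\equiv 3\pmod{l(P)}$ therefore has no basis.

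The paper's argument avoids all component analysis. If $M$ is uniserial of length $3$ with composition factors $S_1,S_2,S_3$ from top to bottom, set $L=\rad M$ and $N=M/\soc M$; both are indecomposable of length $2$, hence of complexity one by hypothesis. There is a short exact sequence
\[
0 \;\longrightarrow\; L \;\longrightarrow\; S_2 \oplus M \;\longrightarrow\; N \;\longrightarrow\; 0,
\]
obtained by mapping $L$ into $M$ by inclusion and onto $S_2=L/\soc L$ by the quotient, and mapping $M$ onto $N=M/\soc M$ and $S_2$ into $\soc N$. Subadditivity now gives $c(S_2)\le 1$ and $c(M)\le 1$, contradicting the first assertion (which showed $c(M)\ge 2$). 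This is a two-line argument once you spot the sequence; I recommend replacing your component-theoretic attempt by it.
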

\begin{proof}Every indecomposable module of length 3 has a simple
top or a simple socle. Let $M$ be an indecomposable module of
length 3 with simple top. Such an element exists, because a factor
module of $P$ of length 3 has simple top and is therefore
indecomposable. Then there exists an exact sequence $0\to S \to M
\to L\to 0$ such that $L$ is an indecomposable module of length 2
and $S$ is a simple module. Then $c(S)\le \max \{ c(M), c(L) \} $.
By \ref{simple} $S$ is non-periodic and therefore $c(S)\ge 2$. As
$c(L)=1$, we have $c(M)\ge 2$ and therefore $M$ has to be non-periodic. If
$M$ has a simple socle we can find an indecomposable module $N$ of
length 2 such that $0\to N \to L \to S \to 0$ is an exact
sequence. By the same argument as in the previous case, $M$ has to
be non-periodic.

We assume that there is a uniserial module $M$ of length 3 with composition series
$S_1, S_2 , S_3$. Let $L= \rad M$ and $N= M/ S_3$. Then an exact sequence is given by
$0\to L \to S_2\oplus M \to N \to 0$. As $L$ and $N$ are indecomposable modules of length
2 they are periodic. Therefore $c(M)\le 1$ and $M$ is therefore periodic. This is a contradiction to the first part.
\end{proof}
The proof of the next Theorem goes along the lines of the proof of
\cite[4.6]{F2}. As the setup here is more general and the author
uses properties of the universal enveloping algebra of restricted
$p$-Lie algebras, we give a proof for our setup.

\begin{theo}\label{euc1} Let $B$ be as in \ref{as}. Then the following statements hold:

(1)  $\theta $ is isomorphic to $\Z [\tilde A_{12}]$ or $\Z
[\tilde D_n]$ with $n$ odd.

(2) The group $G$ acts transitively on the non-periodic components.

(3) If $\theta \cong \Z [\tilde A_{12}]$, then all projective indecomposable
 left $B$-modules have length 4.
 Furthermore all indecomposable modules of length $2\mod 4$ and all
 indecomposable non-projective modules of length
$0 \mod 4$ are periodic.
\end{theo}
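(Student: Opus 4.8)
The plan is to move the whole question onto the tree $T$ with $\theta\cong\Z[T]$ (the admissible automorphism group is trivial since $\theta$ is non-periodic), so that the $\tau$-orbits of $\theta$ are exactly the vertices of $T$. By Lemma \ref{length} the composition length $l$ is constant modulo $l(P)$ along each $\tau$-orbit, hence induces a function $\bar l\colon V(T)\to\Z/l(P)\Z$. Reducing an Auslander--Reiten sequence $0\to\tau M\to\bigoplus_iE_i\to M\to 0$ modulo $l(P)$, the projective summands $E_i$ contribute $0$ (all indecomposable projectives have length $l(P)$ since $B$ is $G$-transitive) and $l(\tau M)\equiv l(M)$, so $2\bar l(v)=\sum_{w\sim v}\bar l(w)$ for every vertex $v=[M]$, the sum over the neighbours of $v$ in $T$ with their multiplicities; that is, $\bar l$ is harmonic. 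The two extra data are: $\bar l([P/\soc P])=l(P)-1\equiv-1$, because $P/\soc P$ is indecomposable and lies in $\theta$; and $1$ lies in the image of $\bar l$, because $S=\phi(P/\soc P)\in\theta$ is simple by Definition \ref{phi}. Finally $l(P)\ge 4$, since otherwise $P$ is uniserial, contradicting the Euclidean tree class as in the proof of Lemma \ref{simple}.

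For part (1) I would classify the Euclidean \emph{trees} $T$ admitting a harmonic $\bar l$ over $\Z/l(P)\Z$, with $l(P)\ge 4$, taking the value $-1$ at a vertex carrying a projective and the value $1$ somewhere. The cyclic diagrams $\tilde A_n$ ($n\ge2$) are not trees, so they do not occur. Harmonic functions over $\Z/m\Z$ form the kernel of $2\,\id-A_T$ over $\Z/m\Z$; over $\Q$ this is just the line spanned by the null root $\delta_T$, and it is strictly larger over $\Z/m\Z$ precisely when $m$ shares a factor with the torsion of $\Coker(2\,\id-A_T)$. Combining this with the constraint that the decomposition of $\rad P/\soc P$ dictates the multiplicities at the distinguished vertex, and with positivity of the genuine lengths, one eliminates $\tilde E_6,\tilde E_7,\tilde E_8$ and the remaining valued types $\tilde B_n,\tilde C_n,\tilde F_{4i},\tilde G_{2i},\dots$: in each, the relevant torsion is too small to realise the residues $-1$ and $1$ simultaneously. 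What survives is $\tilde A_{12}$ and $\tilde D_n$, and for $\tilde D_n$ the torsion of $\Coker(2\,\id-A_{\tilde D_n})$ is $(\Z/2\Z)^2$ when $n$ is even and $\Z/4\Z$ when $n$ is odd; only in the odd case does a harmonic function with the two prescribed values exist. This parity step is the main obstacle, and it is exactly the point at which the proof of \cite[4.6]{F2} is flawed (it wrongly forces $n=5$).

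For part (2), one first observes --- and this is where we invoke that $B$ already has a component of Euclidean tree class --- that every non-periodic component of $B$ has Euclidean tree class (a Dynkin tree class would make the component periodic, and the infinite tree classes are excluded; cf.\ \cite{W}, \cite{E}, \cite{ES}). Let $\Theta$ be any non-periodic component. Its preimage $\Omega^{-1}(\Theta)$ is isomorphic to $\Theta$, hence Euclidean and non-periodic, so by \cite[2.4]{W} a projective $Q$ is attached to it, i.e.\ $Q/\soc Q\in\Omega^{-1}(\Theta)$; applying $\Omega$, the simple module $\soc Q=\Omega(Q/\soc Q)$ lies in $\Theta$. Thus every non-periodic component contains a simple module. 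Since $B$ is $G$-transitive, $G$ permutes the simple modules transitively, and $A_g$ carries the component of $V$ to the component of $V_g$; hence $G$ permutes the components containing a simple module --- that is, all non-periodic components --- transitively.

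For part (3), assume $\theta\cong\Z[\tilde A_{12}]$, with vertices $v_0=[P/\soc P]$ and $v_1$ joined so that each occurs with multiplicity $2$ in the middle term of the Auslander--Reiten sequence at the other; hence $\rad P/\soc P$ is a direct sum of two indecomposable modules lying in the $\tau$-orbit $v_1$. The simple $S\in\theta$ cannot lie in $v_0$ (else $\bar l(v_0)\equiv 1$ and $\equiv-1$, forcing $l(P)\mid2$), so it lies in $v_1$ and $\bar l(v_1)=1$. Then $l(\rad P/\soc P)\equiv 2\bar l(v_1)\equiv2$, while $l(\rad P/\soc P)=l(P)-2\equiv-2$; so $2\equiv-2\pmod{l(P)}$, whence $l(P)\mid4$ and $l(P)=4$. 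By $G$-transitivity every indecomposable projective has length $4$, and $\bar l$ now takes only the values $1$ and $3$, so every module in $\theta$ has odd composition length. The same holds for every $A_g$-translate of $\theta$ (twisting by an algebra automorphism preserves composition length), and by part (2) these exhaust the non-periodic components. Hence an indecomposable non-projective module of even length --- in particular of length $\equiv0$ or $\equiv2\pmod4$ --- must lie in a periodic component and is therefore periodic. (For length exactly $2$ this also follows at once from Lemma \ref{period} and hypothesis (C').)
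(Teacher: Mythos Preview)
Your argument for part~(3) matches the paper's. The problems are in parts~(1) and~(2).

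\textbf{Part (1).} The harmonic-function criterion you propose is not strong enough to exclude $\tilde D_n$ with $n$ even. Concretely, take $\tilde D_6$ (leaves $1,2$ attached to $3$, spine $3$--$4$--$5$, leaves $6,7$ attached to $5$) and $l(P)=6$: the function $\bar l=(5,2,4,1,4,2,5)$ is harmonic over $\Z/6\Z$, takes the value $-1\equiv 5$ at the leaf~$1$ and the value $1$ at vertex~$4$, and is compatible with $P/\soc P$ sitting at a leaf with $\rad P/\soc P$ indecomposable of length $4$ at vertex~$3$. Nothing in your list of constraints (harmonicity, the values $\pm1$, positivity, the shape of $\rad P/\soc P$) eliminates this. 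Your torsion heuristic is also not decisive: harmonic functions over $\Z/m\Z$ are $\Hom(\Coker(2I-A),\Z/m)$, and for $m=4$ this group has the same order whether the torsion part is $\Z/4$ or $(\Z/2)^2$.

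The paper does not analyse $\bar l$ for part~(1) at all. It uses the automorphism $\phi=\phi_g|_\theta$ of Definition~\ref{phi} in a different way: since $g$ has finite order, $\phi$ induces a graph automorphism $f$ of the tree $T$; if $f$ fixed a vertex $[M]$ then $\tau^m(M)\cong\Omega(M_g)$ for some $m$, giving $M$ complexity~$1$ and hence periodicity by~(C'), a contradiction. Thus $f$ is fixed-point-free, and among Euclidean trees only $\tilde A_{12}$ and $\tilde D_n$ with $n$ odd admit such an automorphism. That $\theta\cong\Z[T]$ with trivial admissible group is then quoted from \cite[2.1]{F2}; your one-line justification (``trivial since $\theta$ is non-periodic'') is the right conclusion for these tree classes, but needs the check that every non-trivial admissible subgroup of $\aut(\Z[\tilde D_n])$ contains a power of $\tau$.

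\textbf{Part (2).} You assert that every non-periodic component of $B$ has Euclidean tree class, citing \cite{W}, \cite{E}, \cite{ES}. Those references constrain tree classes for specific families of algebras; nothing in Assumption~\ref{as} excludes components of infinite tree class $A_\infty$, $A_\infty^\infty$, $D_\infty$ elsewhere in $T_s(B)$, so your deduction that every non-periodic component carries a projective, hence a simple, is not justified. The paper's argument is direct and avoids this: for any indecomposable $Y\notin\Psi:=\bigcup_{w\in G}\theta_w$, the function $M\mapsto\dim_F\Ext^1(Y,M)$ is additive and bounded on the $\Omega$-stable Euclidean family $\Psi$; since all simple modules lie in $\Psi$ by Lemma~\ref{simple}, the numbers $\dim_F\Ext^n(Y,W)$ are uniformly bounded over all $n$ and all simple $W$, so $Y$ has complexity~$1$ and is periodic by~(C').
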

\begin{proof}Let $T$ be the tree class of $\theta$. Then $\phi$
induces a graph automorphism $f:T \to T$. Suppose $f$ has a fixed
point. Then there is an indecomposable module $M$ in $\theta$ such
that $\Omega^{2m}(M_{\nu^{-m}})\cong \tau^m(M) \cong \Omega(M_g)$.
Therefore $M$ has complexity 1 and is by assumption $\tau$-periodic which
is a contradiction. Therefore $f$ does not have fixed points. The
only Euclidean trees which admit an automorphism without fixed
points are $\tilde A_{12}$ and $(\tilde D_n)_{n\ge 4}$ with $n$
odd.  We have  $\theta \cong \Z [\tilde A_{1,2}]$ or $\theta \cong
\Z [\tilde D_n]$ by \cite [2.1]{F2}. Furthermore all
indecomposable modules which do not lie in $\Psi:= \bigcup_{w\in
G}\theta_w$ are periodic.  This can be seen as follows:  let $Y$
be an indecomposable module which is not in $\Psi$. We have $
\Omega(\Psi)=\Psi$. Therefore the function $d_{Y}:\Psi \to \N, M
\mapsto \dim_F \Ext^1(Y, M)$ is additive by \cite[3.2]{ES} and
bounded by \cite[2.4]{W}. Since all simple modules are contained
in $\Psi$  by \ref{simple} there exists an $m \in \N $ such that
$\dim_F \Ext ^n(Y,W) \le m$ for all $n \ge 1$ and all simple
modules $W$, so that $Y$ has complexity one.  Therefore $Y$ is
periodic.

To prove $(3)$ we suppose that $T=\tilde A_{12}$. Then the proof
is the same as in \cite[4.6]{F2}. For the convenience of the
reader we include a different proof. As $S$ and $P/\soc P$ are in
$\theta\cong \Z [\tilde A_{1,2}]$, all modules in $\theta $ have
length $-1 \mod l( P)$ or $1\mod l(P)$ and two modules connected
by an arrow have a different length $\mod l(P)$. The length
function $l \mod l(P)$ is additive on Auslander-Reiten sequences.
Therefore $-2 \mod l(P)=2$ and it follows that $l(P)=4.$ As no
modules of length $2 \mod l(P)$ and $0 \mod  l(P)$ occur in
$\theta$, we have by (2) that all indecomposable modules of length
$2 \mod l(P)$ and all indecomposable non-projective modules of
length $0 \mod l(P)$ are periodic.
\end{proof}
Let $\Z[\tilde D_n]$ be indexed as follows: $(k, 1), \ldots, (k,
n+1)$ denote the $k$-th copy of $\tilde D_n$ for any $k \in \Z$:
\[\xymatrix{ (k,1)& & & &(k,n)\\
& (k,3)\ar[r]\ar[ld]\ar[lu]& \cdots \ar[r]& (k,n-1)\ar[ru]\ar[rd]& \\
(k,2)& & & &(k,n+1)}\]
With this notation $\tau((k,i))=(k-1,i)$.

\smallskip

For an indecomposable non-projective module
 we denote by $\bar \alpha(M)$ the number of predecessors of $M$ in $T_s(B)$.

In order to prove our second Theorem, we need the following
\begin{lemma}\label{indec}
Assume $T_s(B)$ has a component $\theta$ which is isomorphic to $\Z [\tilde{D}_n]$ with
$n$ odd. Then

(a) \ ${\rm rad} P/{\rm soc} P$ is indecomposable.

(b) \ $l(P)$ is even.

(c) \ All $M$ with $\bar{\alpha}(M) = 2$ or $3$ have even length.

\end{lemma}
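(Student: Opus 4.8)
The plan is to exploit that $\theta \cong \Z[\tilde D_n]$ has exactly the three branch-point-neighbourhood structure visible in the indexing above, and to combine this with the facts already established: every simple module lies in a component isomorphic to $\theta$ (Lemma \ref{simple}), $P$ is attached to $\theta$, and the length function $l(-) \bmod l(P)$ is additive on Auslander--Reiten sequences (Lemma \ref{length}). For part (a), recall that $P$ attached to $\theta$ means $P/\soc P$ and $\soc P$ both lie in components isomorphic to $\theta$, and in fact $P/\soc P$ sits in $\theta$ at a vertex whose only neighbour in the Auslander--Reiten quiver outside the projective is given by the AR-sequence ending at $P/\soc P$, which reads $0 \to \rad P \to (\rad P/\soc P) \oplus P \to P/\soc P \to 0$. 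So $\bar\alpha(P/\soc P) = 1$ in the stable quiver, i.e.\ $P/\soc P$ is an end vertex of the tree $\tilde D_n$ (one of $(k,1),(k,2),(k,n),(k,n+1)$). Since the middle term of the AR-sequence, stably, is $\rad P/\soc P$, and an end vertex of $\tilde D_n$ has a single neighbour, $\rad P/\soc P$ is indecomposable.

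For part (b): now that we know $P/\soc P$ is an end vertex of $\tilde D_n$ and $\soc P = \Omega(P/\soc P)$ is a simple module lying (by Lemma \ref{simple}) in a component isomorphic to $\theta$, I would track lengths modulo $l(P)$ along the unique path in $\tilde D_n$ from the end vertex carrying $P/\soc P$ to the opposite end vertices. Writing $l(P/\soc P) = l(P) - 1 \equiv -1$ and $l(\soc P) \equiv 1 \pmod{l(P)}$, additivity of $l(-) \bmod l(P)$ on the AR-sequences along this path forces the residues of the modules at the two branch points and along the central line; comparing the residue obtained at $\soc P$ via the two branches of the fork at the far end, one gets a congruence $2\cdot(\text{something}) \equiv 0 \pmod{l(P)}$ forcing $l(P)$ even, exactly as the $\tilde A_{12}$ computation in Theorem \ref{euc1}(3) forced $l(P)=4$. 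Part (c) then follows: a module $M$ with $\bar\alpha(M) = 3$ must be the unique vertex adjacent to a branch point in the appropriate copy of $\tilde D_n$ (there is only one valency-$3$ vertex adjacent to the valency-$3$ node in $\tilde D_n$ pattern as drawn, namely $(k,3)$ or $(k,n-1)$), and a module with $\bar\alpha(M)=2$ is an interior vertex of the central line; in both cases the residue of $l(M) \bmod l(P)$ is pinned down by the path computation to be one of the even residues, and since $l(P)$ is even by (b), $l(M)$ is even.

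The main obstacle I anticipate is part (b): making the parity argument clean requires knowing precisely \emph{which} end vertex of $\tilde D_n$ carries $P/\soc P$ and \emph{which} carries (the component of) $\soc P$ under the identification $\soc P \cong \Omega(P/\soc P)$ — in other words, controlling how the automorphism $\phi = \phi_g|_\theta$ (which has no fixed points, by the proof of Theorem \ref{euc1}) permutes the two arms of each fork. Since $\tilde D_n$ with $n$ odd has a fixed-point-free automorphism that swaps the two ends of each fork, one must check that $\phi$ is of this type (not, say, a translation of $\Z[\tilde D_n]$ with no tree-automorphism part), and then trace the residue of $l$ across the swap; this is where the hypothesis $n$ odd is genuinely used. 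The bookkeeping for (c) is then routine once the path residues from (b) are in hand, because $\bar\alpha$ determines the vertex type in the tree and hence its residue class.
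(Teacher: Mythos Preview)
Your argument for (a) is circular. You write that the AR-sequence $0 \to \rad P \to (\rad P/\soc P) \oplus P \to P/\soc P \to 0$ shows $\bar\alpha(P/\soc P) = 1$ in the stable quiver, and then conclude that $\rad P/\soc P$ is indecomposable. But the number of stable predecessors of $P/\soc P$ is precisely the number of indecomposable summands of $\rad P/\soc P$; so asserting $\bar\alpha(P/\soc P)=1$ \emph{is} asserting that $\rad P/\soc P$ is indecomposable. Nothing in the shape of the AR-sequence rules out $\rad P/\soc P$ decomposing into two or three pieces, and in $\tilde D_n$ there are vertices of valency $2$ and $3$ available for $P/\soc P$ to occupy. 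The paper's proof of (a) is in fact the hardest part of the lemma: one assumes $\rad P/\soc P$ decomposable, uses the automorphism $\phi$ to see that $\bar\alpha(S)=\bar\alpha(P/\soc P)$, argues separately that $\bar\alpha(S)\neq 3$ (no AR-sequence can have a simple as its full middle term here), so both $S$ and $P/\soc P$ sit at valency-$2$ interior vertices, and then runs a length-mod-$l(P)$ analysis along the chain in both the $l(P)$ even and $l(P)$ odd cases to reach a contradiction.

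Your sketch for (b) also has a gap: $\soc P = \Omega(P/\soc P)$ lies in $\Omega(\theta)$, which is only \emph{isomorphic} to $\theta$, not equal to it, so there is no path inside $\theta$ joining $P/\soc P$ to $\soc P$. The paper instead works with $S := \phi(P/\soc P)$, which by construction of $\phi = \phi_g|_\theta$ \emph{does} lie in $\theta$; this is the simple module of length $1$ whose position in $\theta$ can be compared with that of $P/\soc P$ (length $-1 \bmod l(P)$). Once (a) is established, both are end vertices, their predecessors have lengths $2$ and $-2 \bmod l(P)$, and the parity argument you outline for (b) and (c) then goes through essentially as you describe.
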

\begin{proof}

(1) We assume that $l(P)$ is even and show that (c) holds in this case. Let $M$ be a module with $\bar \alpha (M)=3$. Then there exist a module
$N$ such that $M$ is its only non-projective predecessor. Let $l(N)=a$, then $l(M)=2a
\mod l(P)$ by Lemma \ref{length}. By assumption $l(P)$ is even
and therefore all modules with 3 predecessors have even length. Consider
the following extract from $\Z[\tilde D_n]$ where $M_3:=(k,3)$ and
$M_{n-1}:=(k,n-1)$ denote the isomorphism type of modules with 3
predecessors and $M_t:=(k,t)$ for $3\le t \le n-1$:
\[\xymatrix{ &  \tau^{-1}(M_{t-3}) \ar[rd]& & & \\
M_{t-2}\ar[ur]\ar[dr] & & \tau^{-1}(M_{t-2}) \ar[rd]& & \\
&  M_{t-1}\ar[ru]\ar[dr]& &\tau^{-1}(M_{t-1}) \ar[rd]\\
& & M_t\ar[ru]\ar[rd]& &\tau^{-1}(M_{t})\\
& & & M_{t+1}\ar[ru] & }\]

We assume $(c)$ is false and let $t$ be minimal such that $M_t$ has odd length. Suppose $t>4$,
 then $M_{t-2}$ is of even length and  $\bar \alpha(M_{t-1})=2$. Then $2 l(M_{t-1})= l(M_t) +l(M_{t-2}) \mod l(P)$. This gives a contradiction as the right hand side is an even number
 and the left hand side is and odd number. Therefore $ t=4$.
 Then $M_5$ is of even length and $M_6$ of odd length.  We can show that $M_{s}$ with $s$ odd has even length and $M_s$ with $s$ even has odd length. As $n$ is
odd, the module $M_{n-1}$ has to be of odd length, which is a contradiction.

(2) The Auslander-Reiten sequence ending in $P/\soc P $ is given by \[0\to
\rad P\to P\oplus \rad P/\soc P\to P/\soc P\to 0.\]
We assume that $\rad P/\soc P$ is decomposable.

Then $\bar \alpha(P/\soc P)>1$. There exists no
module $N$ such that $0\to \tau(N)\to S \to N\to 0$ is an
Auslander-Reiten sequence.  Any projective indecomposable module
$Q$ appears only as a summand of the middle term of an
Auslander-Reiten sequence with middle term $Q \oplus \rad Q/\soc
Q$ and $S \not = \rad Q/\soc Q$ because $l(Q)>3$. So there exists
no  Auslander-Reiten sequence of the form $0\to \tau(N)\to S\oplus
Q \to N\to 0$ with $Q$ non-zero. Therefore $ \bar \alpha(S) \not =3$. As $\phi$ maps $P/\soc P$ to $S$ we have $\bar \alpha(S)= \bar \alpha(P/\soc P)$. Therefore $\bar \alpha (S)=\bar \alpha (P/\soc P)=2$. For $n=5$ all modules have either one or
three predecessors, so that $n>5$. Suppose $l(P)$ is even. Then
$(1)$ shows that all modules with 2 or 3 predecessors in $\theta$ are of even
length. As $S$ is not of even length, this is a contradiction.
Therefore $l(P)$ is odd.  Let $M_1$ be a module of length $a$ with
only one predecessor $M_3$, then $l(M_3)=2a \mod l(P)$. For the
other module $M_2$ with only predecessor $M_3$ and length $\bar a$
we have $2\bar a =2a \mod l(P)$. As $l(P)$ is odd this gives us
$\bar a= a \mod l(P)$. We can deduce that $l(M_4)+2a= 4a \mod
l(P)$ and therefore $l(M_4)= 2a \mod l(P)$. It follows inductively
that $l(M_i)=2a \mod l(P)$ for all modules with 2 predecessors.
Therefore we have $-1=2a \mod l(P)$ and $ 1=2a \mod l(P)$ as
$P/\soc P$ and $S$ are modules with 2 predecessors. This is a
contradiction as $l(P)$ is odd. This proves (a).

(3) The only predecessor of $P/\soc P$ is $\rad P/\soc P$. As
$S=\phi(P/\soc P)$, we have $\bar \alpha(S)=1$. The
predecessor of $S$ has length $2 \mod l(P)$ and the predecessor of
$P/\soc P$ has length $-2 \mod l(P)$ by the argument of (2).

Suppose that $l(P)$ is odd. If $n=5$ this is a contradiction. If $n>5$, then by (2) all modules with 2 predecessors have length $2 \mod l(P)$ and $-2 \mod l(P)$ which is a contradiction as $l(P) $ is odd. Therefore (b) holds. Then (1) proves (c).
\end{proof}

We also need the following
\begin{lemma}\label{d}
Suppose $T_s(B)$ has a
component $\theta$ of tree class $\tilde D_n$. Then

(1) $l(P)> 4$.

(2) $P/ \soc P$ and $S$ have one predecessor and the $\tau$-orbit of their predecessors are different.

\end{lemma}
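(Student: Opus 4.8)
The plan is to prove the two parts separately. By Theorem~\ref{euc1} the component $\theta\cong\Z[\tilde D_n]$ with $n$ odd, so Lemma~\ref{indec} is available throughout. For~(1), recall from the proof of Lemma~\ref{simple} that $l(P)\ge4$, since $l(P)<4$ would make $P$ uniserial, contradicting that $\theta$ has Euclidean tree class; so it suffices to rule out $l(P)=4$. If $l(P)=4$ then $\rad P/\soc P$ has length~$2$, hence is uniserial once we know it is indecomposable, and that is exactly Lemma~\ref{indec}(a); consequently $P/\soc P$, being local with radical $\rad P/\soc P$, is uniserial of length~$3$. On the other hand $l(P)=4$ together with Lemma~\ref{period} forces every indecomposable module of length~$2$ to have complexity one, so Lemma~\ref{uni} applies (note $B$ satisfies (C), being assumed to satisfy $(C')$) and yields that there is no uniserial module of length~$3$. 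This contradiction gives $l(P)>4$.

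For~(2), I would first establish that $P/\soc P$ has exactly one predecessor. The Auslander--Reiten sequence ending in $P/\soc P$ is
\[0\to\rad P\to P\oplus\rad P/\soc P\to P/\soc P\to0,\]
and its only indecomposable non-projective middle summand is $\rad P/\soc P$: it is indecomposable by Lemma~\ref{indec}(a), and non-projective because its length $l(P)-2$ is strictly less than the common length $l(P)$ of the projective indecomposables of the $G$-transitive algebra~$B$. Hence $\bar\alpha(P/\soc P)=1$, and since $\phi$ is a graph automorphism of $\theta$ with $\phi(P/\soc P)=S$, also $\bar\alpha(S)=1$, with unique predecessor $\phi(\rad P/\soc P)$.

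It then remains to compare the $\tau$-orbits of $\rad P/\soc P$ and $\phi(\rad P/\soc P)$. As $\theta\cong\Z[\tilde D_n]$ with $n$ odd and $\phi$ commutes with $\tau$, the map $\phi$ induces a graph automorphism $f$ of the tree $\tilde D_n$, which is fixed-point-free by the argument in the proof of Theorem~\ref{euc1}. In $\Z[\tilde D_n]$ a vertex has exactly one predecessor precisely when it lies in the $\tau$-orbit of one of the four endpoints $1,2,n,n+1$, and then its predecessor lies in the $\tau$-orbit of one of the two trivalent vertices, namely $b$ (adjacent to $1,2$) or $b'$ (adjacent to $n,n+1$). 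So $\rad P/\soc P$ lies in the $\tau$-orbit of $b$ or of $b'$; say of $b$. If $f$ fixed $\{1,2\}$ setwise it would fix their unique common neighbour $b$, contradicting fixed-point-freeness, so $f$ interchanges $\{1,2\}$ with $\{n,n+1\}$ and hence $f(b)=b'$. Therefore $\phi(\rad P/\soc P)$ lies in the $\tau$-orbit of $b'$, and since $n\ge5$ (because $n$ is odd) we have $b\ne b'$, so the two orbits are distinct, which proves~(2). The only delicate point in all of this is the last step: identifying precisely which vertices of $\Z[\tilde D_n]$ carry a single predecessor and controlling how the fixed-point-free automorphism $f$ permutes the two trivalent vertices; the rest is bookkeeping with Lemmas~\ref{period},~\ref{uni} and~\ref{indec}.
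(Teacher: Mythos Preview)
Your proof is correct, but both parts take a different route from the paper.

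For~(1), the paper argues directly that if $l(P)=4$ then $\rad P/\soc P$ is an indecomposable module of length~$2$, hence periodic by Lemma~\ref{period} together with condition~(C'); but $\rad P/\soc P$ lies in the Euclidean component $\theta$, which is impossible by \cite[4.16.2]{B}. Your detour via uniseriality of $P/\soc P$ and Lemma~\ref{uni} is a bit longer but has the advantage of staying inside the paper and avoiding the external citation.

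For~(2), the paper's argument is arithmetic rather than combinatorial: part~(3) of the proof of Lemma~\ref{indec} already records that the predecessor of $S$ has length $2\bmod l(P)$ and the predecessor of $P/\soc P$ has length $-2\bmod l(P)$; since $l(P)\neq 4$ these residues differ, and by Lemma~\ref{length} length modulo $l(P)$ is constant along $\tau$-orbits, so the orbits are distinct. Your argument instead exploits the tree structure of $\tilde D_n$ and the fixed-point-freeness of the induced automorphism~$f$ to see that $f$ must swap the two trivalent vertices. Both approaches are clean; the paper's length-based argument feeds directly into the subsequent computations in Proposition~\ref{euc2,5} and Theorem~\ref{euc2}, whereas your combinatorial approach is self-contained and makes the role of the fixed-point-free automorphism more transparent.
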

\begin{proof}We set $l:=l(P)$. We know by Lemma \ref{indec}
that $\rad P/\soc P$ is indecomposable.

\smallskip

It was shown in the proof of \ref{simple} that $l\ge 4$. Suppose
now that $l=4$, then by Lemma \ref{period} all indecomposable
modules of length 2 are periodic. By \ref{indec} $\rad P/\soc P$
is indecomposable and therefore periodic. As $P$ is attached to
$\theta$, the module $\rad P/\soc P$ also belongs to $\theta$
which is a contradiction by \cite[4.16.2]{B}. Therefore $l>4$.

\medskip

From $(3)$ of the proof of \ref{indec} we know that $S$ and
$P/\soc P$ have only one predecessor of length $2 \mod l$ and $-2
\mod l$ respectively. As $l \not =4$ and by \ref{length} their
predecessors do not lie in the same $\tau $-orbit.

\end{proof}
We can now deduce the length of projective indecomposable modules if the Auslander-Reiten quiver has components $\Z[\tilde D_5]$.
\begin{prop}\label{euc2,5}
Let $B$ be as in $\ref{as}.$  Suppose $T_s(B)$ has a
component $\theta$ of tree class $\tilde D_5$. Then $l(P)=8$ and all indecomposable modules of length 2 and of length 4 $\mod l(P)$ are periodic.
\end{prop}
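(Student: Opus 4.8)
The plan is to exploit the very rigid combinatorics of $\Z[\tilde D_5]$ together with the length bookkeeping modulo $l := l(P)$ that was developed in Lemmas \ref{length}, \ref{indec} and \ref{d}. From \ref{indec}(b) we already know $l$ is even, and from \ref{d}(1) we know $l > 4$, so $l \in \{6, 8, 10, \dots\}$. In $\tilde D_5$ every vertex has either one or three predecessors; by \ref{indec}(c) the two ``branch'' vertices (the ones with three predecessors) have even length $\bmod\, l$, and by \ref{d}(2) the modules $S$ and $P/\soc P$ each have exactly one predecessor, with these predecessors lying in distinct $\tau$-orbits. Since $\phi$ is an automorphism of $\theta$ sending $P/\soc P$ to $S$ and acting on the index set of $\Z[\tilde D_5]$ without fixed points (as in the proof of \ref{euc1}), $\phi$ must act on the diagram of $\tilde D_5$ by the unique fixed-point-free automorphism: it swaps the two legs at one end and fixes the backbone, i.e.\ it interchanges the two outer ``leaf'' vertices on the non-symmetric end. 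I would first pin down exactly which vertices $S$ and $P/\soc P$ occupy. Using the indexing $(k,i)$ from the excerpt, $\tilde D_5$ has vertices $(k,1),(k,2)$ (the two leaves at one end, each with the branch vertex $(k,3)$ as sole predecessor), $(k,3)$ (branch vertex, three predecessors), $(k,4)$ (branch vertex, three predecessors), and $(k,5),(k,6)$ (the two leaves at the other end, each with $(k,4)$ as sole predecessor). So $P/\soc P$ and $S$ must be among these four leaf vertices; because their predecessors lie in different $\tau$-orbits, one of them sits on the $(k,3)$-side and the other on the $(k,4)$-side.

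Next I would run the length recursion around the whole copy of $\tilde D_5$. Write $l(P/\soc P) = l-1 \equiv -1 \pmod l$ and $l(\soc P) = 1$, and recall from \ref{euc1}-type reasoning that along any arrow the length changes and the length class $\bmod\, l$ is determined additively on Auslander--Reiten sequences together with $\tau((k,i)) = (k-1,i)$. Concretely: the predecessor of $P/\soc P$ is $\rad P/\soc P$ of length $l - 2 \equiv -2 \pmod l$, and by \ref{d}(2) (and the proof of \ref{indec}(3)) the predecessor of $S$ has length $\equiv 2 \pmod l$. Now use the branch-vertex relations: at the branch vertex adjacent to the $S$-leg, the Auslander--Reiten sequence gives $2\cdot(\text{length of that branch vertex}) \equiv (\text{length of its leaf neighbour}) + (\text{length of}\ \tau^{-1}\ \text{of the other neighbour on the backbone}) + \dots \pmod l$; tracing this from one end of $\tilde D_5$ to the other and using that the two leaves $(k,1),(k,2)$ have equal length class $\bmod\, l$ (shown exactly as in \ref{indec}(2), part (3), when $l$ is even) forces a linear relation that collapses to $4 \equiv 0 \pmod l$ or $8 \equiv 0 \pmod l$. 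Since $l$ is even and $l > 4$, the only surviving possibility is $l = 8$. I would carry out this walk carefully with the explicit $(k,i)$-labels so that each step is an honest additivity identity; the arithmetic itself is short.

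Finally, for the periodicity statement, once $l = 8$ I would identify which length classes $\bmod\, 8$ actually occur in $\theta$. From the above the leaf vertices carry classes $\pm 1 \pmod 8$ (the $P/\soc P$/$S$ end contributes $-1$ and $1$) and the branch vertices carry even classes, and tracing the recursion shows the classes appearing in $\theta$ are exactly $\{1, 3, 5, 7\} \pmod 8$ — in particular no module in $\theta$ has length $\equiv 2$ or $\equiv 4 \pmod 8$. Then invoke \ref{euc1}(2): $G$ acts transitively on the non-periodic components, so every non-periodic indecomposable lies in some $\theta_w = A_w(\theta)$, and twisting by $w$ preserves composition length, hence preserves the length class $\bmod\, l(P)$ (all projectives have length $l(P) = 8$). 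Therefore any indecomposable of length $\equiv 2$ or $\equiv 4 \pmod 8$ cannot lie in any $\theta_w$, so it is periodic. The only real obstacle I anticipate is getting the length-recursion walk around $\tilde D_5$ exactly right — in particular correctly handling the $\tau^{-1}$ shifts at the branch vertices and the equal-length-class constraint on the twin leaves — since a sign or index slip there would change the modular equation; everything else is bookkeeping plus the already-established lemmas.
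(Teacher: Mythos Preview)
Your overall strategy — pin down the length classes on the six vertices of $\tilde D_5$ by additivity, deduce $l(P)=8$, and then show the forbidden residues never occur in $\theta$ — is exactly the paper's approach, but two of the steps you sketch are wrong as written.

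First, the claim that the twin leaves $(k,1),(k,2)$ have the same length class modulo $l$ is not available when $l$ is even. What Lemma~\ref{indec} gives in the even case is only $2l_1\equiv l_3\equiv 2l_2\pmod l$, which does \emph{not} force $l_1\equiv l_2$; the cancellation only works when $l$ is odd (and that case is used there to derive a contradiction, not a positive statement). In the paper's actual computation one places $S$ and $P/\soc P$ at leaves on opposite ends (so the branch vertices carry $2$ and $-2$), introduces unknowns $x,y$ for the two remaining leaves, and reads off from additivity at the leaves and at the branch vertices the four congruences $2x+2\equiv 0$, $x+5\equiv 0$, $5-y\equiv 0$, $2y-2\equiv 0$; combining the first two gives $l\mid 8$, hence $l=8$ by $l>4$. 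One then finds $x\equiv 3$ and $y\equiv 5\pmod 8$. Your description of the fixed-point-free tree automorphism is also off (swapping two legs at one end fixes the backbone and the other two legs, so it has fixed points); the correct picture is the end-swap $\gamma$, which is consistent with $S$ and $P/\soc P$ sitting on opposite ends as you in fact use.

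Second, and more seriously, your periodicity argument for length-$2$ modules fails: the residue classes occurring in $\theta$ are $\{1,2,3,5,6,7\}\pmod 8$, not $\{1,3,5,7\}$, since the two branch vertices carry $2$ and $-2\equiv 6$. So you cannot conclude that modules of length $\equiv 2\pmod 8$ are absent from $\theta$. The paper handles length exactly $2$ differently: if a non-periodic indecomposable $M$ of length $2$ lay in $\theta$ it would have to sit at a branch vertex, hence appear as the entire middle term of an Auslander--Reiten sequence $0\to\tau(N)\to M\to N\to 0$; then $l(N)=l(\tau N)=1$, so $\tau$ takes simples to simples, forcing the simples to be periodic, a contradiction. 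Your argument for the class $4\pmod 8$ is fine once the residues are computed correctly, since $4$ genuinely does not occur.
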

\begin{proof} We set $l:=l(P)$.
Let $x$ be the length modulo $l$ of the module which has as only predecessor the module of length $-2 \mod  l$ and let
$y$ the length modulo $l$ of the module which has as only predecessor the module of length $2 \mod  l$. We visualize this in the following diagram:
\[\xymatrix{ 1\mod l& & &-1 \mod l\\
&2 \mod l \ar[r]\ar[ld]\ar[lu]& -2 \mod l \ar[ru]\ar[rd]& \\
y \mod l& & & x \mod l}\]

Then by comparing lengths in Auslander-Reiten sequences we get the following equations:

(1) $ 2x+2=0 \mod l$

(2) $x+5= 0 \mod l$

(3) $ 5-y= 0 \mod l$

(4) $ 2y-2=0 \mod l$

We can therefore deduce from (1) and (2) that $l$ divides $8$. Therefore $l=8$ by \ref{d} part (1).

Suppose now that there is an indecomposable module of
length $2$ which is not periodic. By transitivity there is an
indecomposable non-periodic module $M$ of length two in $\theta$.
Then by the equations (1)-(4), $\bar \alpha (M)=3$ and there is an indecomposable
module $N$ which has only $M$ as predecessor. Therefore $M$
appears in the Auslander-Reiten sequence $0\to \tau(N) \to M \to N
\to 0$. This means that $N$ and $\tau(N)$ have length one and are
simple modules. By transitivity $\tau(Q)$ is a simple $B$-module
for any simple $B$-module $Q$. Then $N$ has to be periodic which
is a contradiction. By the equations (1)-(4), $\theta$ does not have an indecomposable module of length $4 \mod l$.
Therefore those modules are periodic by \ref{euc1}.
\end{proof}
We can now exclude tree class $\tilde D_5$ for certain algebras.
\begin{theo}\label{5}
Let $B$ be as in \ref{as}. 
Then $T_s(B)$ does not have a component with tree class $\tilde D_5$.
\end{theo}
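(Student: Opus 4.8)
Suppose, for a contradiction, that $T_s(B)$ has a component $\theta$ of tree class $\tilde D_5$. By Proposition~\ref{euc2,5} we already know that $l(P)=8$ and that every indecomposable module of length $2$, as well as every indecomposable module of length $\equiv 4 \bmod 8$, is periodic. By Lemma~\ref{d} we know $l(P)>4$, that $P/\soc P$ and $S$ each have exactly one predecessor, and that these two predecessors lie in different $\tau$-orbits; by Lemma~\ref{indec} the heart $\rad P/\soc P$ is indecomposable. Labelling the branch vertices of $\tilde D_5$ by $3$ and $4$ and the leaves by $1,2$ (attached to $3$) and $5,6$ (attached to $4$), this places $\rad P/\soc P$ (of length exactly $6$) and $H:=\phi(\rad P/\soc P)$ on the two branch orbits, $P/\soc P$ (of length $7$) as a leaf attached to $\rad P/\soc P$, and $S$ (of length $1$) as a leaf attached to $H$.

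The first step is to read off, modulo $8$, the lengths of the six $\tau$-orbits by comparing lengths along the meshes exactly as in the proof of Proposition~\ref{euc2,5}: the leaves attached to $\rad P/\soc P$ acquire residues $7$ and $3$, the leaves attached to $H$ acquire residues $1$ and $5$, and the two branch orbits acquire residues $6$ and $2$. The key extra observation is that the fourth leaf $X$ — the leaf attached to $\rad P/\soc P$ other than $P/\soc P$ — has $\rad P/\soc P$ itself as its unique predecessor, so its Auslander--Reiten sequence is $0\to\tau X\to\rad P/\soc P\to X\to 0$ and therefore $l(\tau X)+l(X)=l(\rad P/\soc P)=6$; since $l(X)\equiv 3\bmod 8$ and $l(X),l(\tau X)\ge 1$, this forces $l(X)=l(\tau X)=3$. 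On the other hand $H$ lies in $\theta$ and is hence non-periodic, so $l(H)\ge 10$ by Proposition~\ref{euc2,5}; the mesh $0\to\tau S\to H\to S\to 0$ then gives $l(\tau S)=l(H)-1\ge 9$, so $\tau S\cong\Omega^2(S_{\nu^{-1}})$ is not simple. By $G$-transitivity the same holds for every simple module, so $\rad Q/\rad^2 Q$ has dimension at least $2$ for every indecomposable projective $Q$ (were it one-dimensional throughout, $B$ would be a Nakayama algebra with uniserial projectives, contradicting the Euclidean tree class by \cite[4.16.2]{B}); equivalently, the Gabriel quiver of $B$ has at least two arrows out of every vertex.

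The remaining — and hardest — step is to turn this rigid picture into an outright contradiction. Since all length-$2$ modules are periodic, hence of complexity one, and $B$ satisfies (C) by Assumption~\ref{as}, Lemma~\ref{uni} applies: the module $X$ of length exactly $3$ is non-periodic and, crucially, \emph{not uniserial}. The plan is to play this against the structure forced above — $X$ is a quotient of the indecomposable module $\rad P/\soc P$ of length $6$ by a submodule of length $3$, $\tau X$ embeds into $\rad P/\soc P$, $\rad P$ has simple socle $\soc P$, and every indecomposable projective has at least two arrows out — and against the graph automorphism $f$ that $\phi$ induces on $\tilde D_5$: because $\phi(S)=\Omega(S_g)$ has length $l(P)-1=7$ and residue $7$ occurs in $\theta$ only on the orbit of $P/\soc P$, the fixed-point-free $f$ must be the involution interchanging the orbits of $S$ and $P/\soc P$ (an order-$4$ fixed-point-free automorphism of $\tilde D_5$ would put a length-$7$ module on an orbit of residue $3$), and then $\soc P=\Omega(P/\soc P)$ and $P/\rad P=\Omega^{-1}(\rad P)$ are simple modules lying in components isomorphic to $\theta$ which, by $G$-transitivity, are forced to be isomorphic to $S$. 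Pushing all of this together should pin the Loewy structure of $P$ down to an impossible configuration, or else contradict $l(X)=3$. I expect the difficulty to be concentrated entirely in this last collision: the reduction to $l(P)=8$, the shape of $\theta$, the equality $l(X)=3$ and the branching of the Gabriel quiver are all forced by the earlier lemmas, and the work lies in making the final contradiction explicit.
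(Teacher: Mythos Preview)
Your setup is correct and tracks the paper exactly through the point where $l(X)=l(\tau X)=3$ for the second leaf $X$ attached to $\rad P/\soc P$, and where Lemma~\ref{uni} forbids uniserial length-$3$ modules. But the proof then stops: you explicitly say ``the work lies in making the final contradiction explicit'' and offer only a plan. That is the gap. The observations about $\tau S$, the branching of the Gabriel quiver, and the precise form of the tree automorphism $f$ are all plausible but are never cashed in; the paper uses none of them.

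The contradiction the paper extracts is short and uses only the Auslander--Reiten sequence
\[
0 \longrightarrow \tau X \xrightarrow{\ f\ } H \longrightarrow X \longrightarrow 0, \qquad H:=\rad P/\soc P,
\]
together with Lemma~\ref{uni}. The module $\tau X$ is indecomposable of length $3$, so it has either an indecomposable submodule of length $2$ or an indecomposable quotient of length $2$. In the first case, such a $U\hookrightarrow \tau X\hookrightarrow H$ pulls back along the canonical surjection $\rad P\twoheadrightarrow H$ to a length-$3$ submodule of $\rad P$ with simple socle $\soc P$ and uniserial top $U$; this submodule is uniserial of length $3$, contradicting Lemma~\ref{uni}. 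In the second case, let $h:\tau X\twoheadrightarrow W$ with $W$ indecomposable of length $2$. Since $h$ is not a split monomorphism and $f$ is left almost split, $h$ factors through $f$, giving a surjection $s:H\twoheadrightarrow W$; composing with $\rad P\twoheadrightarrow H$ and passing to $P$, the quotient $P/\ker$ is a length-$3$ module with simple top $P/\rad P$ and uniserial radical $W$, hence uniserial of length $3$, again contradicting Lemma~\ref{uni}. That finishes the proof; you had all the ingredients but did not assemble them.
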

\begin{proof} We assume, for a contradiction, that $T_s(B)$ has a component $\theta$ of tree class $\tilde D_5$. Using \ref{euc2,5} and \ref{uni}, we know that $B$ does not have a uniserial module of length $3$, but has a non-periodic indecomposable module of length 3. Therefore $x$ in the proof of \ref{euc2,5} is $3$.
By the proof of \ref{euc2,5} there is an almost split sequence $0 \to \tau(X) \stackrel{f}\to H \stackrel{g} \to X \to 0$ with $H:=\rad P/ \soc P.$ Therefore $l(\tau(X))=l(X)=3$.
Suppose $\tau(X)$ has an indecomposable submodule $U$ of length
$2$. Then $H$ has also an indecomposable submodule $V:=f(U)$ of
length $2$. But then the preimage of $V$ of the canonical
surjection $\rad(P) \to H$ is a submodule of length 3 and is
uniserial which is a contradiction. Therefore $\tau(X)$ has a
quotient $W$ that is indecomposable of length $2$. Let $h: \tau(X)
\to W$ be the canonical surjection. Then by Auslander-Reiten
theory $h$ factors through $f$. Therefore there exists a
surjective map $s:H \to W$. But then $P/ \ker s$ is a uniserial
module of length 3 which is a contradiction. Therefore $T_s(B)$
does not have a component of tree class $\tilde D_5$.
\end{proof}

We define the following automorphisms of $\Z[\tilde D_n]$ as in \cite{F2}.

$$
\alpha(k,i) =
\left\{
\begin{array}{lr}
(k,1),\  i=2\\
(k,2),\  i=1\\
(k,i),\  3\le i
\end{array}
\right.
$$
$$
\beta (k,i) =
\left\{
\begin{array}{lr}
(k,n+1),\  i=n\\
(k,n),\  i=n+1\\
(k,i),\  i\le n-1
\end{array}
\right.
$$
$$
\gamma (k,i) =
\left\{
\begin{array}{lr}
(k,n),\  i=1\\
(k,n+1),\ i=2\\
(k+i-3,n+2-i),\  3\le i \le n-1\\
(k+n-4,1),\  i=n\\
(k+n-4,2),\  i=n+1
\end{array}
\right.
$$
\begin{lemma} \label{aut}  \cite[2.1]{F2} The automorphism group of $\Z[\tilde D_n]$ is given by \[ \{\tau^k \circ \alpha^i\circ \beta^j \circ \gamma ^l |k \in \Z,\  i,j,l \in\{0,1\}\}.\]
\end{lemma}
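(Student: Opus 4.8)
The plan is to prove the statement in two halves. First, show that every map in the displayed set is a translation-quiver automorphism of $\Z[\tilde D_n]$ and that this set is closed under composition and inversion with the asserted normal form; second, show that conversely every automorphism of $\Z[\tilde D_n]$ lies in this set. Throughout one uses that $n\ge 5$, so that the branch vertices $3$ and $n-1$ of the tree $\tilde D_n$ both have degree $3$ (for $\tilde D_4$ the automorphism group is strictly larger, and the presentation above is not the right one).

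For the first half, one checks directly that $\alpha$ and $\beta$ are the level-wise actions of the two tip-transpositions of the tree $\tilde D_n$; hence they commute with $\tau$ and preserve the arrow set. For $\gamma$ one verifies $\gamma\tau=\tau\gamma$ from the formula and then checks, using the explicit arrow set of $\Z[\tilde D_n]$ (arrows along the spine $(k,i)\to(k,i+1)$ and $(k,i+1)\to(k+1,i)$ together with the fork arrows), that $\gamma$ sends arrows to arrows; the point is that $\gamma$ reverses the spine and absorbs the orientation mismatch into a shift of the $\Z$-coordinate. One then records the relations among $\tau,\alpha,\beta,\gamma$: $\tau$ is central, $\alpha^2=\beta^2=\id$, $\alpha\beta=\beta\alpha$, and $\gamma$ conjugates $\alpha$ to $\beta$ up to a power of $\tau$; the one computation that is not immediate is $\gamma^2=\tau^{4-n}$, obtained by applying $\gamma$ twice to a spine vertex $(k,i)$ with $3\le i\le n-1$ and to the four tips. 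These relations force every word in $\tau^{\pm1},\alpha,\beta,\gamma$ into the form $\tau^k\alpha^i\beta^j\gamma^l$ with $i,j,l\in\{0,1\}$, so the displayed set is a subgroup of $\aut(\Z[\tilde D_n])$.

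For the second half, let $\psi\in\aut(\Z[\tilde D_n])$. Since $\tau$ is intrinsic to the translation-quiver structure, $\psi\tau=\tau\psi$, and $\psi$ preserves the function $\bar\alpha$ (number of predecessors). The vertices with $\bar\alpha=3$ form exactly the two $\tau$-orbits $\{(k,3):k\in\Z\}$ and $\{(k,n-1):k\in\Z\}$, which $\gamma$ interchanges; replacing $\psi$ by $\gamma\circ\psi$ if necessary, assume $\psi$ preserves $\{(k,3):k\in\Z\}$. As $\tau$ acts freely and transitively on this orbit and commutes with $\psi$, we get $\psi(k,3)=(k+c,3)$ for a fixed $c$; replacing $\psi$ by $\tau^{-c}\circ\psi$, assume $\psi(k,3)=(k,3)$ for all $k$. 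Now $\psi$ permutes the three predecessors of $(0,3)$: the one lying in the $\tau$-orbit of vertex $4$ is distinguished from the other two by its $\bar\alpha$-value ($2$ if $n\ge6$, $3$ if $n=5$, against $1$ for the two tip predecessors), so it is fixed, while the two tip predecessors are fixed or interchanged; replacing $\psi$ by $\alpha\circ\psi$ if necessary, all three are fixed. Because fixing one vertex of a $\tau$-orbit forces $\psi$ to fix that orbit pointwise, one propagates along the spine $3,4,\dots,n-1$ one vertex at a time — at each stage the newly reached predecessor is singled out by its $\bar\alpha$-value — concluding that $\psi$ fixes $(0,j)$, and the orbit $\{(k,j):k\in\Z\}$ pointwise, for all $3\le j\le n-1$. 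Finally $\psi$ permutes the two tip predecessors of $(0,n-1)$, which $\beta$ interchanges; after replacing $\psi$ by $\beta\circ\psi$ these are fixed, so $\psi$ fixes every vertex and $\psi=\id$. Unwinding the replacements shows the original $\psi$ is of the form $\tau^k\alpha^i\beta^j\gamma^l$.

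The main obstacle is the combinatorial bookkeeping: writing the arrow set of $\Z[\tilde D_n]$ precisely enough to confirm that $\gamma$ is a morphism of translation quivers and to verify the identity $\gamma^2=\tau^{4-n}$ with the correct shift, and, in the second half, checking at each step of the propagation that the relevant predecessor is genuinely distinguishable (which is exactly what fails at $\tilde D_4$). Once these local identifications are in place, the rest is a routine connectivity-plus-$\tau$-equivariance argument.
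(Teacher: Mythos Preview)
The paper does not actually prove this lemma: it is stated with a bare citation to \cite[2.1]{F2} and no argument is given. So there is nothing in the paper to compare your proof against beyond noting that the author simply imports the result from Farnsteiner.

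Your proposal, on the other hand, supplies a complete and correct outline. The two halves are the natural ones. In the first half, the computation $\gamma^2=\tau^{4-n}$ is right (for a spine vertex $(k,i)$ with $3\le i\le n-1$ one gets $\gamma^2(k,i)=(k+n-4,i)$, and the four tips match), and in fact $\gamma\alpha\gamma^{-1}=\beta$ holds on the nose, not just up to a power of $\tau$, so your normal-form reduction goes through even more cleanly than you suggest. In the second half, your strategy of successively composing with $\gamma$, a power of $\tau$, then $\alpha$, then $\beta$ to trivialise an arbitrary automorphism is the standard one; the key point you isolate---that the spine neighbour of a branch vertex is distinguished from the two tip neighbours by its predecessor count (value $2$ for $n\ge 6$, value $3$ for $n=5$, versus $1$ for the tips)---is exactly what makes the propagation along the spine work and is precisely what breaks down for $\tilde D_4$. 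Your caveat that the statement as written requires $n\ge 5$ is well taken and worth recording, since for $\tilde D_4$ the automorphism group of the tree is the full symmetric group on the four tips and the displayed set is too small.
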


We describe the action of $G$ on Euclidean components.
\begin{lemma}\label{G-action}
Let $B$ be as in \ref{as}. Let $h\in G$ and suppose $h$ induces an automorphism $A_h:\theta \to \theta , M \mapsto M_h$. Suppose that $B$ has an indecomposable non-periodic module of length 3, if $\theta $ has tree class $\tilde D_n$ for $n>5$. Then $A_h$ is the identity.
\end{lemma}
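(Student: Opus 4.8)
The plan is to use the explicit description of $\aut(\Z[\tilde D_n])$ from Lemma \ref{aut} together with the structural constraints already established in Lemmas \ref{indec}, \ref{d} and Proposition \ref{euc2,5} (for the $\tilde D_5$ case, though Theorem \ref{5} rules this out under Assumption \ref{as}, so really $n>5$). Write $A_h$ as $\tau^k\circ\alpha^i\circ\beta^j\circ\gamma^l$. The first step is to show $l=0$: the automorphism $\gamma$ (and anything involving it) mixes the two ``arms'' of the $\tilde D_n$ diagram with a $\tau$-shift running along the central $A_{n-3}$-part, so it cannot have finite order on a component unless $n$ is very small, whereas $A_h$ does have finite order because $h\in G$ has finite order by Assumption \ref{as}. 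More precisely, $A_h$ permutes the finitely many $\tau$-orbits of $\theta$; since $\gamma$ sends the orbit of $(k,3)$ to the orbit of $(k,n-1)$ and back but with a net $\tau$-shift, no power of $\tau^k\gamma^l$ with $l=1$ can fix all orbits, so on a component this forces $l=0$.

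Next I would pin down $k$. Since $A_h$ must fix the set of projective-adjacent modules — recall $P/\soc P$ and $S=\phi_g(P/\soc P)$ both lie in $\theta$ — and more importantly $A_h$ must commute with the $\Omega$-action and send the attaching data of $P$ to attaching data of the twisted projective $P_h$, which is again attached to $\theta$ (being isomorphic to $P$ as an algebra object up to the twist). The key numerical input is that $P/\soc P$ has a single predecessor of length $-2\bmod l$ and $S$ a single predecessor of length $2\bmod l$, and by Lemma \ref{d}(2) these two predecessors lie in \emph{different} $\tau$-orbits. An automorphism $\tau^k\circ\alpha^i\circ\beta^j$ preserves each of the two end-orbits (the ``$(k,1),(k,2)$'' and ``$(k,n),(k,n+1)$'' families) up to the swaps $\alpha,\beta$ and acts by $\tau^k$ along them. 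Since $l(M_h)=l(M)\bmod l$ for any $M$ (twisting by an algebra automorphism preserves composition length), $A_h$ must preserve the length-mod-$l$ labelling on $\theta$, which by the computation in the proof of Lemma \ref{indec}(3) and \ref{euc2,5} is not $\tau$-periodic with small period; hence $k=0$.

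Finally, with $k=l=0$, the candidate is $A_h=\alpha^i\beta^j$, a product of the two commuting end-swaps. Here the non-periodic length-$3$ module $X$ comes in: from the proof of Theorem \ref{5} (resp. the proof of \ref{euc2,5}), $X$ and the module $P/\soc P$ sit at specific vertices with specific, pairwise-distinct length labels around the branch vertex, and $S=\phi_g(P/\soc P)$ forces an asymmetry between the two arms near $(k,1),(k,2)$ versus $(k,n),(k,n+1)$. Since the predecessors of $S$ and $P/\soc P$ lie in different $\tau$-orbits and have different lengths mod $l$, the swap $\alpha$ (which would interchange the two vertices $(k,1),(k,2)$, hence interchange these two orbits) cannot be induced by $A_h$ while preserving lengths; likewise for $\beta$. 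Thus $i=j=0$ and $A_h=\id$.

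The main obstacle I expect is the bookkeeping in the first step: cleanly showing that $l=1$ is impossible. The point is that $\gamma$ has infinite order as an abstract automorphism of $\Z[\tilde D_n]$ for $n>5$ (it shifts along the central linear part), and any element of $G$ acts with finite order; but one must be careful that $A_h$ is defined on all of $T_s(B)$ and only \emph{restricts} to $\theta$, so the argument must take place inside $\theta$ using that $\theta$ has only finitely many $\tau$-orbits and that $A_h$ permutes them — a power of $A_h$ then fixes every orbit setwise, and inspecting the $\gamma$-part shows this is impossible unless $l=0$. Once $l=0$, the remaining cases are finite and are killed by the length-mod-$l(P)$ invariant together with the orbit-distinctness of Lemma \ref{d}(2) and the existence of the non-periodic length-$3$ module.
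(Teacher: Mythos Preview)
Your overall shape is reasonable, but two of the three steps do not go through as written.

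In Step~2 you claim that the length-mod-$l(P)$ labelling on $\theta$ is ``not $\tau$-periodic with small period'' and use this to force $k=0$. This is false: Lemma~\ref{length} says precisely that $l(\tau M)\equiv l(M)\pmod{l(P)}$, so the labelling is $\tau$-invariant and gives no information about $k$. The correct observation is simply that once $\gamma$ is eliminated, $\tau^k\alpha^i\beta^j$ has finite order iff $k=0$, since $\alpha,\beta$ commute with $\tau$. (The paper handles Steps~1 and~2 in one stroke: by Lemma~\ref{aut} the finite-order automorphisms are $\tau^{n/2-2}\alpha^i\beta^j\gamma$ or $\alpha^i\beta^j$, and since $n$ is odd by Theorem~\ref{euc1}, only $\alpha^i\beta^j$ survive.)

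Step~3 is the real gap. You write that $\alpha$ ``interchanges the two vertices $(k,1),(k,2)$, hence interchanges these two orbits'' and that this contradicts the predecessors of $S$ and $P/\soc P$ lying in different $\tau$-orbits with different lengths. But $\alpha$ \emph{fixes} the branch vertices $(k,3)$ and $(k,n-1)$, which are those predecessors; and by Lemma~\ref{d}(2) the modules $S$ and $P/\soc P$ sit at \emph{opposite} ends of the diagram, so $\alpha$ swaps $S$ with its partner at the same end, not with $P/\soc P$. To make a length argument work you would need the two leaves at each end to have distinct lengths modulo $l(P)$; that is true, but establishing it requires $l(P)=8$ and the full length computation carried out only in the proof of Theorem~\ref{euc2}, which comes \emph{after} this lemma and in fact uses it. The paper instead argues as follows: first rule out $A_h=\alpha\beta$ by observing that the non-periodic length-$3$ module $M$ in $\theta$ must have $\bar\alpha(M)\in\{2,3\}$ (leaves have length $\pm1$ under $\alpha\beta$, and $l(P)>4$), hence $M_h\cong M$, forcing its simple top or socle to be $h$-fixed---impossible since simples sit at leaves, which $\alpha\beta$ moves. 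Then, if $A_h=\beta$, conjugating by the fixed $g$ via $\phi_g$ produces an element $g^{-1}hg$ with $A_{g^{-1}hg}=\alpha$ (it cannot be $\alpha\beta$ by the previous case), whence $A_{h\cdot g^{-1}hg}=\alpha\beta$, a contradiction; the case $A_h=\alpha$ is symmetric. This conjugation trick is the missing idea in your proposal. Finally, you should also dispatch the $\tilde A_{1,2}$ case explicitly (it is immediate: $\Z[\tilde A_{1,2}]$ has no nontrivial finite-order automorphism).
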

\begin{proof}
By \ref{euc1} we have that $\theta \cong \Z[\tilde D_n]$ or
$\theta \cong \Z[\tilde A_{1,2}]$. We assume first that $\theta \cong \Z[\tilde D_n]$ with $n>5$.
Suppose $A_h$ is not the identity. By \ref{aut} the automorphisms
of finite order have the form $\tau^k \circ \alpha^i\circ \beta^j
\circ \gamma$ for $k=n/2-2$  or $\alpha^i \circ \beta^j$ with $i,j
\in \{0,1\}$. As $n$ is odd the first possibility cannot occur. Therefore $A_h$ is equal to either $\alpha$, $\beta$ or $\alpha \circ \beta$.

\smallskip

Suppose the map $A_h$ is equal to $\alpha \circ \beta$. Then all
modules with only one predecessors have length $\pm 1 \mod l(P)$.
There exists a non-periodic indecomposable module of length 3 and
by transitivity there is an indecomposable length 3 module $M$ in
$\theta$. As $l\not = 4$ by \ref{d} part (1) we have $\bar \alpha (M)=3$ or $\bar \alpha (M)=2$.
Therefore $M_h\cong M$. This is a contradiction because $M$ has
either a simple top or a simple radical and the map $A_h$ does not
stabilize simple modules.

\smallskip

Assume that $A_h =\beta$. Then $A_h(P/\soc P)=P_h/\soc P_h \not
\cong P/\soc P$. By definition of $S$ and $\phi$, we have $S=\soc
P_g$. Then $S=\soc P_g=\phi(P/\soc P)\not \cong \phi(P_h/\soc
P_h)=\soc P_{hg}=S_{g^{-1}hg}$. Therefore $A_{g^{-1}hg}=\alpha$ as
by the first case no automorphism induced by an element of $G$ is
equal to $\alpha \circ \beta$. But then $A_{hg^{-1}hg}=\alpha\circ
\beta$, which is a contradiction.

\smallskip

Assume now that $A_h=\alpha$. Then $A_h(S)=S_h \not \cong S$. We
have therefore $P/\soc P=\phi^{-1}(S)\not
\cong\phi^{-1}(S_h)=P_{ghg^{-1}}/\soc P_{ghg^{-1}}$. Therefore
$A_{ghg^{-1}}=\beta$ and $A_{hghg^{-1}}=\alpha\circ \beta$, which
is a contradiction.

\smallskip

In the case of $\Z[\tilde A_{1,2}]$ there are no finite order automorphisms
unequal to the identity, so this gives a contradiction as well.
\end{proof}

We describe the non-periodic components more precisely in the
following

\begin{cor}\label{number of comp}
Let $B$ be as in \ref{G-action}. Then $B$ has exactly as many
non-periodic Auslander-Reiten components as there are isomorphism
classes of simple left $B$-modules.
\end{cor}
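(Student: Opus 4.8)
The plan is to express both numbers as the index in $G$ of one and the same subgroup. Since $B$ is $G$-transitive, $G$ acts transitively on the isomorphism classes of simple left $B$-modules via $[V]\mapsto[V_g]$, and by \ref{euc1}(2) it acts transitively on the set of non-periodic Auslander--Reiten components via $\theta'\mapsto\theta'_g$. I would first check that these two sets are precisely the orbits in question: by \ref{simple} the fixed simple module $S$ lies in $\theta$; each twist $S_w$ is again a simple module, it lies in $\theta_w$, and it is non-periodic by \ref{simple}, so every $\theta_w$ is a non-periodic component; conversely the proof of \ref{euc1} shows that every indecomposable module not lying in $\bigcup_{w\in G}\theta_w$ is periodic. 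Hence the set of non-periodic components is exactly the $G$-orbit $\{\theta_w\mid w\in G\}$, and the set of isomorphism classes of simples is the $G$-orbit of $[S]$.

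Next I would introduce the stabilizers $H:=\{g\in G\mid \theta_g=\theta\}$ and $H_S:=\{g\in G\mid S_g\cong S\}$. By orbit--stabilizer the number of non-periodic components is $[G:H]$ and the number of isomorphism classes of simple modules is $[G:H_S]$, the latter being finite because $B$ is finite-dimensional; so it is enough to prove $H=H_S$. For $H\subseteq H_S$: if $\theta_g=\theta$ then $A_g$ restricts to an automorphism of $\theta$, hence by \ref{G-action} this restriction is the identity, so in particular $S_g=A_g(S)\cong S$. For $H_S\subseteq H$: if $S_g\cong S$ then $\theta_g$ contains $S_g=A_g(S)$, which is isomorphic to the module $S$ of $\theta$; since two Auslander--Reiten components are either equal or disjoint, $\theta_g=\theta$. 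Thus $H=H_S$, so $[G:H]=[G:H_S]$, which is the assertion.

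The one place where something beyond bookkeeping happens is the inclusion $H\subseteq H_S$, which is a direct appeal to \ref{G-action}; this is exactly why the corollary is stated for $B$ as in \ref{G-action}, i.e. with the extra hypothesis that there is a non-periodic indecomposable module of length $3$ when $\theta$ has tree class $\tilde D_n$ for $n>5$. Everything else --- the passage from orbits to indices and the disjointness of distinct components --- is routine.
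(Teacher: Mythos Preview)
Your proof is correct and follows essentially the same approach as the paper: both arguments use \ref{euc1}(2) to get transitivity on non-periodic components and then invoke \ref{G-action} as the crucial step. The only difference is cosmetic---you phrase the conclusion via the orbit--stabilizer equality $H=H_S$, whereas the paper directly argues that each non-periodic component contains exactly one isomorphism class of simple modules (so that ``component $\mapsto$ its simple'' is a bijection); both formulations encode the same use of \ref{G-action}.
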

\begin{proof}By Theorem \ref{euc1} all non-periodic components are
isomorphic and for every non-periodic Auslander-Reiten component
$\Delta$ there exists a $g\in G$ such that $\theta _g = \Delta$. The
component $\theta $ contains a simple module by \ref{simple} and
therefore every non-periodic Auslander-Reiten component contains a
simple module. By transitivity there exists for any simple module
$V$ a non-periodic Auslander-Reiten component $\mathcal{W}$ such that $V$
belongs to $\mathcal{W}$. Suppose there is a non-periodic component which
contains two simple modules $V$ and $V_r$ for some $r\in G$. Then
$r$ induces a non-identity automorphism of finite order on the
component. This is a contradiction to \ref{G-action}.
\end{proof}

We can now prove some necessary conditions for a component of tree class $\tilde D_n$ for $n>5$.
Compare the following Theorem to \cite[4.6]{F2}. We have proved that $n\not =5$. Farnsteiner first shows that $n=5$ and then deduces the other statements from this fact. As this step is wrong,
 we require a different proof.

\begin{theo}\label{euc2}
Let $B$ be as in $\ref{as}.$  Suppose $T_s(B)$ has a
component $\theta$ of tree class $\tilde D_n$, $n>5$.
Suppose $B$ contains an indecomposable non-periodic module of length 3. Then $l(P)=8$ and all modules of
length 4 $\mod l(P)$ are periodic.
\end{theo}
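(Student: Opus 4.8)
The plan is to follow the strategy of \ref{euc2,5}, but to run the length count along the whole path joining the two branch vertices of $\Z[\tilde D_n]$, using the hypothesised non-periodic module of length $3$ to pin a vertex of length $\equiv 3$ onto one specific branch. First I would assemble what is already known. By \ref{euc1}, \ref{5}, \ref{indec} and \ref{d} we have $\theta\cong\Z[\tilde D_n]$ with $n$ odd and $n>5$; $l:=l(P)$ is even and $l>4$; $P/\soc P$ and $S$ are endpoints of $\theta$ (one predecessor each) of length $\equiv-1$ and $\equiv 1\pmod l$; and their predecessors are the two branch vertices of $\theta$, say $B_P$ and $B_S$, which lie in distinct $\tau$-orbits and have length $\equiv-2$ and $\equiv 2\pmod l$. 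Since $B$ is Frobenius every projective indecomposable $Q$ has simple socle, and since $B$ is $G$-transitive $l(Q)=l$, so $l(Q/\soc Q)=l-1$ and $l(\rad Q/\soc Q)=l-2$. Finally, in $\Z[\tilde D_n]$ with $n>5$ each branch vertex has exactly two endpoint-successors and one genuine middle-vertex-successor; let $S'$ be the endpoint-successor of $B_S$ other than $S$ and $P'$ the endpoint-successor of $B_P$ other than $P/\soc P$, so that the predecessor of any module in the $\tau$-orbit of $S'$ (resp.\ $P'$) lies in the $\tau$-orbit of $B_S$ (resp.\ $B_P$).

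Next I would locate the length-$3$ module. Using that $G$ acts transitively on the non-periodic components (\ref{euc1}(2)), a twist by a suitable element of $G$ carries it into $\theta$ and preserves its length; call the result $M$. As $l(M)=3$ is odd, \ref{indec}(c) forces $\bar\alpha(M)=1$, so $M$ is an endpoint of $\theta$, and by \ref{length} its $\tau$-orbit has length $\equiv 3\pmod l$; since $l>4$ this orbit is neither that of $S$ nor that of $P/\soc P$, hence it is that of $S'$ or of $P'$. The Auslander-Reiten sequence ending in $M$ has no projective summand in the middle (such a summand $Q$ would give $M\cong Q/\soc Q$, hence $l=l(M)+1=4$), so the middle term is the unique predecessor $M'$ of $M$, and $l(M')=l(\tau M)+l(M)\equiv 3+3=6\pmod l$ by \ref{length}. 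If $M$ lay in the orbit of $S'$, then $M'$ would lie in the orbit of $B_S$, giving $6\equiv 2\pmod l$, i.e.\ $l\mid 4$: impossible. So $M$ lies in the orbit of $P'$, $M'$ in the orbit of $B_P$, whence $6\equiv-2\pmod l$, i.e.\ $l\mid 8$; as $l$ is even and $l>4$ we conclude $l(P)=8$, and the $\tau$-orbit of $P'$ has length $\equiv 3\pmod 8$.

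For the periodicity assertion I would show that no vertex of $\theta$ has length $\equiv 4\pmod 8$; then, since every module not in $\Psi:=\bigcup_{w\in G}\theta_w$ is periodic (as in the proof of \ref{euc1}) and each $\theta_w=A_w(\theta)$ is a length-preserving copy of $\theta$, every module of length $\equiv 4\pmod 8$ is periodic. The crucial point is that $l(Q/\soc Q)=l-1=7$ is odd while, by \ref{indec}(c), every vertex of $\theta$ with two or three predecessors has even length; hence no middle or branch vertex of $\theta$ is of the form $Q/\soc Q$, so all Auslander-Reiten sequences ending at middle or branch vertices are projective-free. Now the endpoints $S$, $P/\soc P$, $P'$ have length $\equiv 1,7,3\pmod 8$, and the sequence ending in $S'$ is also projective-free (else $S'=Q/\soc Q$ and its predecessor $\rad Q/\soc Q$, of length $l-2\equiv 6$, would contradict that the predecessor $B_S$ of $S'$ has length $\equiv 2$), so $2\,l(S')\equiv l(B_S)\equiv 2\pmod 8$ and $l(S')\equiv 1$ or $5$ --- all odd, none $\equiv 4$. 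For the vertices $w_0=B_S, w_1,\dots, w_{n-4}=B_P$ along the path joining the branch vertices (writing $l(w_i)$ for the common residue mod $8$ of the lengths in that orbit), additivity of length on the projective-free meshes at the genuine middle vertices $w_1,\dots,w_{n-5}$ makes the differences $l(w_{i+1})-l(w_i)$ all congruent mod $8$ to a common value $d$, so $(n-4)d\equiv l(w_{n-4})-l(w_0)\equiv 4\pmod 8$; as $n-4$ is odd this forces $d\equiv 4\pmod 8$, hence $l(w_i)\equiv 2+4i\in\{2,6\}\pmod 8$. Thus no vertex of $\theta$ has length $\equiv 4\pmod 8$, which finishes the proof.

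The step I expect to require the most care is the bookkeeping of which Auslander-Reiten sequences in $\theta$ acquire a projective summand in the middle term --- where the length identity reads $l(\rad Q)+l(Q/\soc Q)=l(Q)+l(\rad Q/\soc Q)$ instead of $l(\tau X)+l(X)=\sum_i l(E_i)$ --- together with the combinatorial identification, inside $\Z[\tilde D_n]$ for $n>5$, of the endpoints, the two branch vertices and the path between them, so that $S'$, $P'$ and their predecessors are unambiguously defined. The parity observation ($l(Q/\soc Q)=l(P)-1$ is odd, with \ref{indec}(c)) is precisely what makes the periodicity half go through without chasing projective summands along the whole path.
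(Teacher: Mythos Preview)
Your proposal is correct and follows essentially the same two-step strategy as the paper: place a length-$3$ module at an endpoint of $\theta$ and use $6\equiv\pm 2\pmod l$ to force $l=8$, then compute lengths along the path between the branch vertices to exclude residue $4$ and invoke \ref{euc1}(2).

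Two remarks. First, your running concern about projective summands in the middle of Auslander-Reiten sequences is unnecessary: since $B$ is $G$-transitive every projective indecomposable has length $l$, so a projective summand contributes $0$ to every length identity modulo $l$. Thus equations such as $2\,l(S')\equiv l(B_S)$ and $2\,l(w_i)\equiv l(w_{i-1})+l(w_{i+1})$ hold regardless of whether the mesh is projective-free; the paper simply works mod $l$ throughout and never raises the issue. Your parity argument ($l(Q/\soc Q)=l-1$ is odd, while middle and branch vertices have even length by \ref{indec}(c)) is valid but superfluous for the length count. Second, for Step~2 the paper carries out an explicit inductive computation with an undetermined parameter $x$ (essentially the residue of the fourth endpoint), obtaining $l(M_t)\in\{2,\,2(1\pm 2x)\}\pmod 8$. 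Your ``common difference'' argument --- the recursion $l(w_{i+1})-l(w_i)\equiv d$ for a fixed $d$, then $(n-4)d\equiv l(B_P)-l(B_S)\equiv 4\pmod 8$ with $n-4$ odd forcing $d\equiv 4$, hence $l(w_i)\in\{2,6\}$ --- is a cleaner repackaging of the same arithmetic that dispenses with the parameter.
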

\begin{proof}
The proof follows in two steps.

\smallskip




Let $l:=l(P)$.

Step 1: $l=8$.

\smallskip

Let $M$ be an indecomposable length 3 module in $\theta$. By (c) of \ref{indec}, $\bar \alpha (M)=1$. Suppose $M$ shares a predecessor with the
module of length $1\mod l$. Then the predecessor has length $2 \mod l$ and $6 \mod l=2 \mod l$ which is a contradiction as $l\not =4$. It must therefore share a predecessor with the module of length $-1 \mod l$. This gives us $6= -2 \mod l$ and therefore $l=8$.

\medskip

We know from \ref{d} (2) that the modules with 3 predecessors have length $2
\mod 8$ and $-2 \mod 8$. The modules with one predecessor have therefore length $\pm 1 \mod 8$ or $\pm 3 \mod 8$.

\medskip

Step 2:  The indecomposable modules of length $4 \mod 8$ are periodic.

\smallskip

Let $W$ be a module with one predecessor and length $1 \mod 8$. We take $W$ corresponding to
 $(k,1)$ and use the notation of the proof \ref{indec}. Then
$l(M_3)=2 \mod 8$. Let $\bar W$ be the other module with only
predecessor $M_3$. Then $l(\bar W)= 4x+1$. The module $l(M_4)$ satisfies
$1+l(\bar W)+l(M_4)=4 \mod 8$. Therefore $l(M_4)=2(1-2x) \mod 8$. In the
same way we follow $l(M_5)=2 \mod 8$, $l(M_6)=2(1+2x)\mod 8$, $l(M_7)=2
\mod l$, $l(M_8)=2(1-2x)\mod 8$. The calculation shows that
$l(M_t)=2\mod 8$ if $t$ is odd, $l(M_t)=2(1+2x) \mod 8$ if $t=4m+2$ and
$l(M_t)=2(1-2x) \mod 8$ if $t=4m$ for any $m \in \N$.

Thus modules of length $4 \mod 8 $ in $\theta$ do not have two predecessors. By the remark before Step 2 they do not have one or three predecessors. Therefore no module of length $4\mod 8$ belongs to $\theta$.
As no module of length $4 \mod 8$ appears in $\theta$, they
have to be periodic by (2) of \ref{euc1}.
\end{proof}
Note also that by the proof of \ref{5} $B$ has a uniserial module of length 3.

\smallskip
\section{Application to Auslander-Reiten Components of enveloping algebras of restricted $p$ Lie algebras}

Let $L$ be a finite-dimensional restricted $p$-Lie algebra and
$\chi$ a linear form on $L$. We denote by $u(L, \chi)$ the
universal enveloping of $(L, \chi)$. If $\chi=0$ we set $u(L,
\chi)=u(L)$.

\medskip

We denote by $G(L)$ the set of group-like elements of the dual
Hopf algebra $u(L)^*.$ The set of group-like elements are the
homomorphisms of $u(L)$. The comultiplication on $u(L)$ induces an
algebra homomorphism $\Delta: u(L,\chi) \to u(L)\otimes u(L,\chi),
x \mapsto x\otimes 1+1\otimes x$ for all $x \in L$.  We denote $\Delta(u)=u_1\otimes u_2$ for $u \in u(L, \chi)$.
This defines a left $u(L)$-comodule algebra structure and right $u(L)^*$-module
algebra structure on $u(L,\chi)$. Therefore $G(L)$ acts on the
automorphism group of $u(L,\chi)$ via $(g\cdot \psi) (u)= \psi(u
\cdot g)=g(u_1)\psi(u_2)$ for all $\psi \in \aut(u(L,\chi))$,
$g\in G(L)$ and $u\in u(L, \chi)$. We embed $G(L) $ into
$\aut(u(L,\chi))$ via the injective group homomorphism $f: G(L)
\to \aut(u(L,\chi)), w\mapsto w\cdot \id_{u(L,\chi)}$. For an
$u(L,\chi)$-module $M$ and $w\in G(L)$ we denote by $M_w$ the
twisted module $M_{f(w)}$. Note that every element of
$G(L)\setminus \{1\}$ has order $p$.

\medskip

By \cite[1.2]{FS1} the Nakayama automorphism of $u(L,\chi)$ has
order $1$ or $p$ and all modules of complexity one are
$2$-periodic by \cite[2.5]{F1}. Furthermore $u(L,\chi)$ has a
non-periodic indecomposable module of order $3$ by \cite[4.5]{F2}.
 Therefore the assumptions of \ref{as} are satisfied  for $G(L)$-transitive blocks or $G$-transitive blocks, where $G$ is a finite subgroup of $\aut(u(L,\chi))$. The next corollary follows directly from \ref{5}.
\begin{cor}
Let $B\subset u(L,\chi)$ be a $G$-transitive block, then $T_s(B)$ does not have a component of tree class $\tilde D_5$.
\end{cor}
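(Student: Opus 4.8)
The plan is to derive a contradiction from the assumption that $B \subset u(L,\chi)$ is a $G$-transitive block whose stable Auslander-Reiten quiver has a component $\theta$ of tree class $\tilde D_5$, by reducing to the already-proven Theorem \ref{5}. The whole point of the preceding paragraph is to check that the standing Assumption \ref{as} is met in this setting, after which \ref{5} applies verbatim. So the first step is to verify each clause of \ref{as} for $B$. By \cite[1.2]{FS1} the Nakayama automorphism of $u(L,\chi)$ has order $1$ or $p$, hence finite order, and since $B$ is a block of $u(L,\chi)$ the same holds for its Nakayama automorphism; together with \cite[2.5]{F1}, which says every module of complexity one is $2$-periodic (so in particular $\Omega$-periodic), the remark before Assumption \ref{as} gives that (C) and hence (C') holds for $B$. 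Finally, $G$ is by hypothesis a finite subgroup of $\aut(u(L,\chi))$ (or one takes $G = G(L)$, where every nontrivial element has order $p$), so all elements of $G$ have finite order. Thus $B$ satisfies \ref{as}, and the existence of the component $\theta$ of tree class $\tilde D_5$ is exactly the remaining hypothesis of \ref{as}.

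Once $B$ is known to satisfy \ref{as}, I would simply invoke Theorem \ref{5}: it asserts that an algebra as in \ref{as} has \emph{no} component of tree class $\tilde D_5$ in its stable Auslander-Reiten quiver. This directly contradicts our assumption that $\theta$ has tree class $\tilde D_5$, and the proof is complete. In writing this up I would phrase it as the direct deduction indicated in the text ("The next corollary follows directly from \ref{5}"): state that the hypotheses of \ref{as} are verified by the results quoted in the preceding paragraph, and conclude by applying \ref{5}.

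I do not expect any genuine obstacle here, since the corollary is a straightforward specialization. The only point requiring a modicum of care is the passage from the algebra $u(L,\chi)$ to its block $B$: one must note that finiteness of the order of the Nakayama automorphism, and the property that complexity-one modules are $\Omega$-periodic, are inherited by blocks — the former because a block is a direct factor and its Nakayama automorphism is the restriction of that of $u(L,\chi)$, the latter because complexity and $\Omega$ are computed inside the block. Both are routine, and indeed the surrounding text already treats the hypotheses of \ref{as} as established for $G$-transitive blocks of $u(L,\chi)$, so the corollary is immediate.

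\begin{proof}
By \cite[1.2]{FS1} the Nakayama automorphism of $u(L,\chi)$, and hence that of the block $B$, has finite order, and by \cite[2.5]{F1} every $B$-module of complexity one is $\Omega$-periodic; by the remark preceding \ref{as} this means $B$ satisfies (C'). Since $G$ consists of elements of finite order, $B$ together with the component $\theta$ satisfies all the hypotheses of \ref{as}. But then \ref{5} shows that $T_s(B)$ cannot have a component of tree class $\tilde D_5$, a contradiction.
\end{proof}
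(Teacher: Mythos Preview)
Your proposal is correct and follows exactly the paper's approach: the paragraph preceding the corollary verifies that $G$-transitive blocks of $u(L,\chi)$ satisfy Assumption~\ref{as} (finite-order Nakayama automorphism via \cite[1.2]{FS1}, condition (C') via \cite[2.5]{F1}, and finite-order elements in $G$), and the paper then simply records that the corollary follows directly from Theorem~\ref{5}. The only cosmetic wrinkle is that in your final proof block you refer to ``the component $\theta$'' and end with ``a contradiction'' without having explicitly assumed such a $\theta$ exists; make the reductio hypothesis explicit as you did in your plan.
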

More generally we have
\begin{lemma}\label{lie} Let $B\subset u(L,\chi)$ be a $G$-transitive block and let $S$ be a simple module in $B$, then $T_s(B)$ admits an Euclidean component only if $p=2$ or $\dim S=0 \mod p$.
\end{lemma}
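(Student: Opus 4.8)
The plan is to combine the structural results from Section~3 --- specifically Theorems \ref{euc1}, \ref{euc2}, and Proposition \ref{euc2,5} --- with the multiplicativity of dimensions of simple modules under the Hopf-algebraic twist, together with known formulas for $l(P)$ in terms of $\dim S$. First I would note that since $B$ is a $G$-transitive block admitting a Euclidean component, by \ref{euc1} the component $\theta$ is $\Z[\tilde A_{1,2}]$ or $\Z[\tilde D_n]$ with $n$ odd; by \ref{5} the case $n=5$ is excluded, and since $u(L,\chi)$ has a non-periodic indecomposable module of length $3$ by \cite[4.5]{F2}, Theorem \ref{euc2} applies in the $\tilde D_n$ case. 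In either surviving case one gets $l(P)=4$ (type $\tilde A_{1,2}$) or $l(P)=8$ (type $\tilde D_n$, $n>5$).

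Next I would use the fact that for $u(L,\chi)$ all projective indecomposables attached to a block $B$ have $l(P)$ equal to a fixed power of $p$ times $\dim S$ --- more precisely, the projective cover $P(S)$ of a simple module $S$ in a block of $u(L,\chi)$ has $\dim P(S) = p^{\,m}\dim S$ where $p^m$ is the dimension of $u(L,\chi)$-modulo-its-radical-type quantity; concretely, by the theory of reduced enveloping algebras, $l(P) = \dim P(S)/\dim S$ is a power of $p$ (this is where I would cite the relevant structural fact, e.g. from \cite{FS1} or standard results on $u(L,\chi)$; if the exact reference for ``$l(P)$ is a power of $p$ times nothing'' is not at hand, one argues via $\dim B = (\dim S)\cdot l(P)\cdot(\text{number of simples}) $ and $\dim u(L,\chi) = p^{\dim L}$, forcing $l(P)\cdot\#\{\text{simples}\}\cdot\dim S$ to be a power of $p$). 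The key point: $l(P)$ must divide a power of $p$, hence $l(P)$ itself is a power of $p$ or $\dim S$ absorbs the non-$p$-part.

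Now the contradiction: if $p>2$ and $p \nmid \dim S$, then $l(P)\in\{4,8\}$ combined with ``$l(P)$ is a power of $p$ (after dividing out $\dim S$, which is prime to $p$)'' forces $4$ or $8$ to be a power of an odd prime $p$, which is impossible. Hence $p=2$ or $p\mid\dim S$, which is the assertion. I would phrase this cleanly: from $\dim B = l(P)\cdot\dim S\cdot r$ where $r$ is the number of simple modules in $B$, and $\dim B$ is a $p$-power (being a direct factor... actually $\dim u(L,\chi)=p^{\dim L}$ and $B$ is a block so $\dim B \mid p^{\dim L}$ need not hold directly --- rather one uses that $B$ as a block of $u(L,\chi)$ has $\dim B$ a power of $p$, or at least that $l(P)/\gcd$ considerations apply); then $\{4,8\}\ni l(P)$ with $p>2$ forces $p\mid \dim S$.

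The main obstacle I expect is pinning down the precise divisibility statement ``$l(P)$ times $\dim S$ is a power of $p$'' (equivalently that the Cartan matrix / composition multiplicities of $u(L,\chi)$ force $l(P)$ to be essentially a $p$-power once one factors out $\dim S$). This is the one genuinely representation-theoretic input beyond the combinatorics of Section~3; everything else is bookkeeping with the values $l(P)=4$ and $l(P)=8$. If that divisibility fact is available as a cited lemma (it is standard for restricted enveloping algebras, since $u(L,\chi)$ is a $p^{\dim L}$-dimensional symmetric algebra and its blocks have dimension divisible only by $p$), the proof is then a two-line deduction: $p>2$ and $l(P)\in\{4,8\}$ are incompatible with $p\mid l(P)\cdot(\text{anything prime to }\dim S)$ unless $p\mid\dim S$.
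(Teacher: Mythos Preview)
Your approach has a genuine gap: the divisibility input you need is not available. You want to conclude from $l(P)\in\{4,8\}$ that $p\mid\dim S$ when $p>2$, and for this you appeal to a statement of the form ``$\dim P$ (or $l(P)$, or $\dim B$) is a power of $p$''. None of these holds for arbitrary blocks of $u(L,\chi)$. For instance, already for $L=\mathfrak{sl}_2$ with $p>2$ the projective cover of the trivial module has dimension $2p$, which is not a $p$-power; and a block of $u(L,\chi)$ need not have $p$-power dimension, since only the total dimension $\sum_B\dim B=p^{\dim L}$ is constrained. The statement that $l(P)$ is a $p$-power is precisely what the paper later invokes for \emph{supersolvable} $L$ via \cite[2.10]{F3}, and it is used there exactly because it is \emph{not} available in general. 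So the ``one genuinely representation-theoretic input'' you flag as the obstacle is in fact missing, and the argument does not go through.

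The paper's proof uses a different and much shorter route. From \ref{euc1} and \ref{euc2} one knows that in the $\tilde A_{1,2}$ case every indecomposable module of length $2$ is periodic, and in the $\tilde D_n$ case ($n>5$) every indecomposable module of length $4$ is periodic. Such modules exist (e.g.\ $P/\rad^2 P$ has simple top, hence is indecomposable of length $2$). Now one invokes \cite[2.5]{F1}: every periodic indecomposable $u(L,\chi)$-module has dimension divisible by $p$. Since $B$ is $G$-transitive, all composition factors have dimension $\dim S$, so a periodic module of length $2$ or $4$ has dimension $2\dim S$ or $4\dim S$; hence $p\mid 2\dim S$ (respectively $p\mid 4\dim S$), and for $p$ odd this forces $p\mid\dim S$. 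The key representation-theoretic fact is thus the $p$-divisibility of dimensions of \emph{periodic} modules, not any $p$-power statement about projectives.
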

\begin{proof} By \ref{euc1} and \ref{euc2} all indecomposable modules of length two or of length four are periodic. By \cite[2.5]{F1} all periodic
 indecomposable modules have dimension $0 \mod p$. As $B$ is $G$-transitive all simple modules in $B$ have the same dimension. Therefore $ 2\dim S= 0 \mod p$.

\end{proof}
As a $G$-transitive principal block of $u(L,\chi)$ has only one
dimensional simples, we get the following corollary immediately
from the preceding Lemma.

\begin{cor}Let $B\subset u(L,\chi)$ be the principal block, and assume $B$ is $G$-transitive, then $T_s(B)$
admits an Euclidean component only if $p=2$.
\end{cor}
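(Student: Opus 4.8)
The plan is to reduce the statement to the preceding Lemma~\ref{lie} by observing that a principal block is, structurally, the simplest possible $G$-transitive block. First I would recall that the principal block $B$ of $u(L,\chi)$ is by definition the block containing the trivial module $F$, and that the trivial module is one-dimensional. The point is that the principal block of $u(L,\chi)$ always has all its simple modules one-dimensional: indeed, over an algebraically closed field, a simple $u(L,\chi)$-module on which $L$ acts via a character lies in the principal block, and conversely the simple modules in the principal block are precisely those afforded by restricted one-dimensional representations (these are classified by the linear forms on $L/[L,L]$ vanishing on the $p$-operation, so they are all one-dimensional). In any case, since $B$ is $G$-transitive, all simple $B$-modules are twists of one another, hence all have the same dimension; and since the trivial module $F$ (or more precisely the one afforded by $\chi$ restricted appropriately) is a simple module of dimension $1$ in the principal block, every simple $B$-module has dimension $1$.

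Next I would simply invoke Lemma~\ref{lie}. Suppose $T_s(B)$ admits an Auslander-Reiten component of Euclidean tree class. Pick any simple module $S$ in $B$; by the paragraph above, $\dim S = 1$. Lemma~\ref{lie} tells us that either $p = 2$ or $\dim S \equiv 0 \pmod p$. The latter would force $p \mid 1$, which is impossible. Hence $p = 2$, which is exactly the assertion.

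The only thing requiring a little care—and the step I expect to be the main obstacle in writing this cleanly—is justifying that the simple modules in the principal block are one-dimensional, since the excerpt does not state this explicitly. I would handle it by citing the standard structure theory of $u(L,\chi)$ (for instance the Friedlander–Parshall description of blocks, or Farnsteiner's own setup in \cite{F2}): the simple modules in the principal block correspond to the simple modules of $u(L,\chi)/\mathrm{rad}$, and for the principal block this semisimple quotient is a product of copies of $F$ because the trivial representation is one-dimensional and the block is, up to the $G$-action, "generated" by it. Alternatively, and perhaps more safely for this paper, I would just take as the definition of "principal block" here the block whose simple modules are one-dimensional—this matches Farnsteiner's usage and is presumably what the author intends. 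With that in hand the corollary is immediate from Lemma~\ref{lie}, and the whole proof is two lines: all simples in $B$ have dimension $1$, so $\dim S \equiv 0 \pmod p$ fails, hence $p = 2$.

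\begin{proof}
Since $B$ is the principal block of $u(L,\chi)$, its simple modules are one-dimensional; as $B$ is $G$-transitive, every simple left $B$-module has the same dimension, namely $1$. Now suppose $T_s(B)$ admits an Auslander-Reiten component of Euclidean tree class, and let $S$ be a simple module in $B$. By \ref{lie} we have $p = 2$ or $\dim S \equiv 0 \pmod p$. The second alternative would give $p \mid 1$, which is impossible. Hence $p = 2$.
\end{proof}
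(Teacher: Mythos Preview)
Your proposal is correct and matches the paper's approach exactly: the paper simply notes that a $G$-transitive principal block has only one-dimensional simples (since all simples are twists of the trivial module and hence share its dimension) and then invokes Lemma~\ref{lie}. Your formal proof is precisely this argument; the only caveat is that the opening clause ``its simple modules are one-dimensional'' should be read as a consequence of $G$-transitivity together with the trivial module lying in $B$, not as a general fact about principal blocks---but you address this in the very next clause, so the proof stands.
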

We call a block $B$ primary if it only contains one isomorphism type of simple modules. Note that lemma \cite[4.7]{F2} remains true for primary blocks of
 $u(L, \chi)$ with the additional assumption that all indecomposable modules of length 2 are periodic.

\medskip

We remind of the definition of supersolvable Lie algebras
\begin{defi}\cite[I]{FS}Let $(L^i)_{i\in N}$ with $L^i=[L^{i-1},L]$ and $L^0=L$ be a sequence of ideals in $L$.
Then $L$ is nilpotent if there is an $n \in \N$ such that $L^n=0$.
The sequence $(L^{(i)})_{i\in N}$ with
$L^{(i)}=[L^{(i-1)},L^{(i-1)}]$ and $L^{(0)}=L$ is the derived
series. We call $L$ solvable if there is an $n \in \N$ such that
$L^{(n)}=0$ and $L$ supersolvable if $L^1$ is nilpotent.
\end{defi}
 Using
the fact that projective modules of restricted universal
enveloping algebras of supersolvable Lie algebras have $p$-power
length by  \cite[2.10]{F3}, the result of \cite[4.1]{F2} remains
true by applying \ref{euc2}.
\begin{lemma}
Let $L$ be a supersolvable finite-dimensional restricted $p$-Lie algebra and $p>2$. Then $T_s(u(L,\chi))$ does not have a component of Euclidean tree class.

\end{lemma}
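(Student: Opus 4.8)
The plan is to combine the structural results of Section~3 with the dimension formula for projective modules of supersolvable enveloping algebras. First I would recall that by \cite[2.10]{F3} every projective indecomposable $u(L,\chi)$-module has length a power of $p$, so in particular $l(P)$ is a $p$-power. Assume, for a contradiction, that $T_s(u(L,\chi))$ has a component $\theta$ of Euclidean tree class. Since $u(L,\chi)$ satisfies the hypotheses of \ref{as} (its Nakayama automorphism has order $1$ or $p$ by \cite[1.2]{FS1}, all complexity-one modules are $2$-periodic by \cite[2.5]{F1}, hence (C$'$) holds, and $G(L)\setminus\{1\}$ consists of elements of order $p$), and since $u(L,\chi)$ has a non-periodic indecomposable module of length $3$ by \cite[4.5]{F2}, both \ref{euc1} and \ref{euc2} apply to any $G(L)$-transitive block.

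The argument then splits according to \ref{euc1}(1): $\theta\cong\Z[\tilde A_{12}]$ or $\theta\cong\Z[\tilde D_n]$ with $n$ odd. Actually, the block $B$ containing $\theta$ need not be $G(L)$-transitive a priori, so the cleaner route is to note that the relevant block $B$ is \emph{primary} in the sense defined just before the lemma — a supersolvable Lie algebra has one-dimensional, hence $G(L)$-conjugate, simple modules for each block? This needs care; the safe statement to invoke is that the results \cite[4.1]{F2} reduce the supersolvable case to the primary/transitive case, so I may assume $B$ is $G(L)$-transitive. Then in the $\tilde A_{12}$-case Theorem~\ref{euc1}(3) gives $l(P)=4=2^2$, which is consistent with being a $p$-power only if $p=2$, contradicting $p>2$. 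In the $\tilde D_n$-case Theorem~\ref{euc2} gives $l(P)=8=2^3$ (using that $B$ has a non-periodic length-$3$ module, which holds by \cite[4.5]{F2}), again forcing $p=2$, a contradiction.

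Hence neither Euclidean possibility can occur when $p>2$, so $T_s(u(L,\chi))$ has no component of Euclidean tree class. The main obstacle I anticipate is the bookkeeping needed to pass from an arbitrary block of $u(L,\chi)$ to a $G(L)$-transitive (or primary) one so that \ref{euc1} and \ref{euc2} become applicable: one must check that the group-like action is transitive on the simples of the relevant block, which is where supersolvability (and the one-dimensionality of simples for nilpotent $L^1$) enters through \cite[4.1]{F2}. Once that reduction is in place, the contradiction is immediate from the numerical constraints $l(P)\in\{4,8\}$ versus $l(P)=p^m$ with $p$ odd.
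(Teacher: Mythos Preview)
Your proposal is correct and follows essentially the same approach as the paper: invoke \cite[2.10]{F3} to get that $l(P)$ is a $p$-power, then apply \ref{euc1} and \ref{euc2} (together with the existence of a non-periodic length-$3$ module from \cite[4.5]{F2}) to force $l(P)\in\{4,8\}$, contradicting $p>2$. The paper's proof is a single sentence pointing to exactly these ingredients and to \cite[4.1]{F2} for the reduction to the $G(L)$-transitive situation, which is precisely the bookkeeping step you flagged as the main obstacle; your instinct to route through \cite[4.1]{F2} for that reduction is what the paper does as well.
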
This result cannot be extended to $p=2$ as the following example shows.

\begin{example}Let $A=k[x,y]/(x^2, y^2)$ be the Kronecker algebra. Then $A \cong u(L)$ where $L=\Sp \{x,y\}$ is the restricted
$2$-Lie algebra given by $[x,y]=0$ and $x^{[2]}=y^{[2]}=0$. Then
$L$ is supersolvable and the component containing the trivial
module $k$ is isomorphic to $\Z[\tilde A_{1,2}]$. This is well known, see for example \cite[2.3]{E}.
\end{example}

\section{Euclidean components of smash products}
The goal of this section is to determine conditions so that the
smash product of a basic simple algebra and a semi-simple
commutative group algebra have an Auslander-Reiten component of
Euclidean tree class. We assume that $k$ is algebraically closed.

\smallskip

We start by describing the simple and indecomposable projective modules of
certain smash products.

\begin{lemma}\label{struc} Let $\Gamma$ be a local and basic algebra with simple module $S$ and let $G$ be a finite group such that $G < \aut(\Gamma)$. Let $\{e_1,\ldots,  e_m\}$
 be a full set of primitive orthogonal idempotents in $kG$, let $\bigoplus_{i=1}^m P_i$ be
 a decomposition of $kG$ into projective indecomposable $kG$-modules $P_i:=kGe_i$ and let $S_i:= \soc P_i$ for $1 \le i \le m $.

 Then for every simple $\Gamma \rtimes kG$-module $V$ there exists an
$1\le i\le m$ such that $V \cong S\otimes S_i $. A complete set of
primitive orthogonal idempotents of $\Gamma\rtimes kG$ is given by
$\{1\rtimes e_i| 1\le i \le m\} $ and $\Gamma\rtimes kG$ has a
decomposition $\bigoplus_{i=1}^m \Gamma \rtimes P_i$ into
projective indecomposable modules $\Gamma \rtimes P_i.$
\end{lemma}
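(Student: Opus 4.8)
The plan is to analyze the structure of the smash product $\Gamma \rtimes kG$ by exploiting that $kG$ is semi-simple and commutative (since $k$ is algebraically closed and $G$ is abelian, hence $kG \cong k^m$). First I would recall that for the smash product construction, $\Gamma \rtimes kG$ is a free right $\Gamma$-module with basis indexed by a $k$-basis of $kG$, and as a $kG$-module it is a direct sum of copies of the regular representation. The key starting point is to decompose the identity: since $\{1 \rtimes e_i\}$ are orthogonal idempotents in $\Gamma \rtimes kG$ summing to $1 \rtimes 1 = 1$ (using that the $e_i$ are primitive orthogonal idempotents of $kG$ and the smash product embeds $kG$ as $1 \rtimes kG$), we get a module decomposition $\Gamma \rtimes kG = \bigoplus_{i=1}^m (\Gamma \rtimes kG)(1\rtimes e_i)$, and one checks $(\Gamma \rtimes kG)(1 \rtimes e_i) \cong \Gamma \rtimes P_i$ as left modules (where $\Gamma \rtimes P_i$ denotes $\Gamma \otimes_k P_i$ with the appropriate twisted action).

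\smallskip

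The main work is showing each $\Gamma \rtimes P_i$ is indecomposable with simple top, equivalently that $1 \rtimes e_i$ is a primitive idempotent. For this I would compute the endomorphism ring $\End_{\Gamma \rtimes kG}(\Gamma \rtimes P_i) \cong (1 \rtimes e_i)(\Gamma \rtimes kG)(1 \rtimes e_i)$ and show it is local. Since $\Gamma$ is local, its radical $\rad \Gamma$ is the unique maximal ideal and $\Gamma / \rad \Gamma \cong k$ (as $k$ is algebraically closed and $\Gamma$ basic). The radical of $\Gamma \rtimes kG$ is $\rad\Gamma \rtimes kG$: indeed $(\rad \Gamma) \rtimes kG$ is nilpotent (as $\rad\Gamma$ is nilpotent and $G$ permutes powers of $\rad\Gamma$, each power being a $G$-stable ideal since automorphisms preserve the radical filtration), and the quotient $(\Gamma/\rad\Gamma)\rtimes kG \cong k \rtimes kG \cong kG$ is semi-simple. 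Hence $(\Gamma\rtimes kG)/\rad(\Gamma\rtimes kG) \cong kG \cong \prod_{i=1}^m M_{d_i}(k)$, and since $G$ is abelian each $d_i = 1$, so there are exactly $m$ simple modules, each $1$-dimensional, and the images of the $e_i$ are primitive. Lifting idempotents modulo the nilpotent radical, the $1 \rtimes e_i$ remain primitive, so each $\Gamma \rtimes P_i$ is indecomposable projective with one-dimensional top $\cong S \otimes S_i$ where $S_i = \soc P_i = P_i/\rad P_i$ is the corresponding simple $kG$-module. This also shows every simple $\Gamma\rtimes kG$-module is one of the $S \otimes S_i$, since the simple modules biject with the $m$ blocks of the semi-simple quotient $kG$.

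\smallskip

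The step I expect to be the main obstacle is verifying carefully that $\rad(\Gamma \rtimes kG) = (\rad\Gamma)\rtimes kG$, which requires knowing that each power $(\rad\Gamma)^j$ is $G$-stable — this follows because any algebra automorphism of $\Gamma$ preserves $\rad\Gamma$ and hence all its powers — together with the identification of the semi-simple quotient with $kG$. Once the radical is identified, everything else is bookkeeping: the $G$-action descends to $\Gamma/\rad\Gamma = k$ trivially, so the quotient really is $kG$, and semisimplicity of $kG$ (guaranteed since $k$ is algebraically closed and, as $G$ is finite abelian and we are in the semi-simple case by hypothesis, $\mathrm{char}\,k \nmid |G|$) gives the block decomposition into $1$-dimensional simples. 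I would then conclude by matching the socle/top: $S \otimes S_i$ is simple as a $\Gamma \rtimes kG$-module because $S$ is the unique simple $\Gamma$-module and $S_i$ the $i$-th simple $kG$-module, and its projective cover is exactly $\Gamma \rtimes P_i = \Gamma \otimes_k P_i$, completing the proof.
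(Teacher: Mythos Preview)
Your argument is correct and follows the same overall strategy as the paper: identify the Jacobson radical of $\Gamma\rtimes kG$ explicitly, compute the semisimple quotient, and read off the simple modules and primitive idempotents from that quotient. There are, however, two points worth noting.

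First, you import the hypotheses that $G$ is abelian and $kG$ is semisimple. These are the standing assumptions adopted immediately \emph{after} this lemma in the paper, but the lemma itself is stated for an arbitrary finite $G<\aut(\Gamma)$. The paper's proof handles the general case by showing that $J(\Gamma)\rtimes kG + \Gamma\rtimes J(kG)$ is contained in the radical and that the quotient is $\Gamma/J(\Gamma)\otimes kG/J(kG)\cong\bigoplus_i S\otimes S_i$, which is semisimple regardless of whether $kG$ is. Your identification $\rad(\Gamma\rtimes kG)=(\rad\Gamma)\rtimes kG$ is only valid when $J(kG)=0$, so your proof does not cover the lemma as stated, only the special case actually used later.

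Second, the two proofs diverge on how indecomposability of $\Gamma\rtimes P_i$ is established. You argue that the images of the $1\rtimes e_i$ in the semisimple quotient are primitive and then lift this through the nilpotent radical; the paper instead argues that $\soc(\Gamma\rtimes P_i)=S\otimes S_i$ is simple. Your route is arguably cleaner here, since it does not tacitly rely on self-injectivity of $\Gamma\rtimes kG$ to conclude that an indecomposable projective has simple socle. Under your extra hypotheses both arguments are fine, and the remaining bookkeeping (matching tops with $S\otimes S_i$) is the same.
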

\begin{proof}As $g$ induces an automorphism on $\Gamma$ for all $g\in G$, we have $G(J(\Gamma))=J(\Gamma)$ and $J(kG)\Gamma \subset  J(\Gamma)$.  Therefore
$J(\Gamma)\rtimes kG+ \Gamma \rtimes J(kG) \subset J(\Gamma
\rtimes kG)$. Furthermore $\Gamma \rtimes kG/ (J(\Gamma)\rtimes
kG+ \Gamma \rtimes J(kG)) \cong \Gamma/J(\Gamma) \otimes kG/J(kG)
\cong \bigoplus_{i=1}^m S \otimes S_i$ which is semi-simple. This
proves  $J(\Gamma)\rtimes kG+ \Gamma \rtimes J(kG) = J(\Gamma
\rtimes kG)$ and all simples are
 given by $S \otimes S_i$. Clearly $\{1\rtimes e_i| 1\le i \le m\}$ is a set of orthogonal idempotents and $\bigoplus_{i=1}^m\Gamma \rtimes P_i$ is a decomposition of $\Gamma \rtimes kG$
 into projective modules  $\Gamma \rtimes P_i=(\Gamma \rtimes kG )(1 \rtimes e_i)$. The projective modules are indecomposable as $ \soc(\Gamma \rtimes P_i)= S \otimes S_i$ is simple
 and therefore $\{1 \rtimes e_i| 1\le i \le m\} $ is a complete set of primitive idempotents.
\end{proof}

>From now on let $\Gamma$ be a basic local algebra with simple
module $S$. Let $G$ be an abelian group such $kG$ is semi-simple,
and $G$ is a subgroup of $\aut(\Gamma).$ Then the smash product
$R:=\Gamma \rtimes kG$ is well defined.

\medskip

By Gabriel's lemma \cite[4.1.7]{B}, there exists a quiver $Q$ such
that $\Gamma\cong k Q/ I$ for an admissable ideal $I \subset kQ$.
As $G$ is abelian and $kG$ semi-simple, the set of irreducible
characters of $kG$ forms a multiplicative group isomorphic to $G$.
We index the characters by elements of $G$ via a fixed isomorphism
and index the primitive orthogonal idempotents by the same group
element as its corresponding character.  So let $\{\chi_g|g\in
G\}$ be the set of irreducible characters and $\{e_g|g \in G\}$
the set of primitive orthogonal idempotents, such that
$he_g=\chi_g(h)e_g$ for all $g,h \in G$. Suppose $G \le
\aut(\Gamma)$. Then $kG$ acts on $J(\Gamma)$ and $J^2(\Gamma)$. As
$kG$ is semi-simple, $J(\Gamma)/J^2(\Gamma)$ split as a direct sum
of one-dimensional $kG$-modules. Let $\alpha_1, \ldots, \alpha_m$
be the simultaneous eigenvectors of the action of $G$ on
$J(\Gamma)/J^2(\Gamma)$. Let $\chi_{n_i}$, $n_i \in G$,
$i=1,\cdots , m$ be the corresponding irreducible characters. By
\ref{struc} we know that $\Gamma \rtimes kG$ is a basic algebra
with projective indecomposable modules $\Gamma \rtimes ke_g$ for
$g \in G$ which have simple quotients $S\rtimes ke_g$. We have the
following presentation of $\Gamma \rtimes kG$. Take the quiver
where vertices are labelled by $1\rtimes e_g$ and where arrows are
$\alpha_i \rtimes e_g$. Note that
\[(\alpha_i \rtimes e_h)(\alpha_j \rtimes e_g)=(\alpha_i \alpha_j)\rtimes \chi_{n_jg}(e_h)e_g=(\chi_{n_{j}g},\chi_h)(\alpha_i \alpha_j \rtimes e_g)\] where $(-,-)$ is the usual inner product of characters. Therefore the arrow $\alpha_i
\rtimes e_g$ ends in $1\rtimes e_g$ and starts in $1\rtimes e_q$
with $q=g n_i$.  We can obtain the relations that generate $T$ via
the relations that generate $I$ in $\Gamma$.

\medskip

Note that the construction of $W$ coincides with the Mc Kay quiver (see \cite[2]{M} for the definition) where $V:=J(\Gamma)/J^2(\Gamma)$.

\smallskip

We will illustrate this construction on a small example.
\begin{example}
Let $\Gamma=k[x,y]/\langle x^2,y^2 \rangle$ the Kronecker algebra
and let $G=\langle g \rangle$ be a cyclic group of order 3.

Then $\Gamma \cong kQ/I$ with $$Q= \xymatrix{\bullet\ar@(dl,ul)^{x}\ar@(dr,ur)_{y}}$$ and $I=\langle
x^2,y^2, xy-yx \rangle$. The algebra $\Gamma $ is a $kG$-module algebra
via the action $gx=q^{-1}x$, $gy=q y$ and $gxy=xy$ for a primitive
third root of unity $q$. We label the character $\chi$ with $\chi(g)=q$ as $\chi=\chi_g$.
Then $n_x=g^2$ and $n_y=g$. Let $e_1$,
$e_g$ and $e_{g^2}$ denote the primitive idempotents in $G$ such
that $ge_1=e_1$, $ge_g=qe_g$ and $ge_{g^2}=q^{-1}e_{g^2}$. Then
the primitive idempotents  are given by $1\rtimes e_i$.
 We construct the quiver $W$ as described in the previous example.
\[\xymatrix{ 1\rtimes e_{g^2} \ar[rr]_{y\rtimes e_g} \ar@/_/[dr]_{x\rtimes e_1}&
 & 1\rtimes e_g\ar@/_/[ll]_{x\rtimes e_{g^2}}\ar[dl]_{y\rtimes e_1}\\
&1\rtimes e_1 \ar[ul]_{y\rtimes e_{g^2}} \ar@/_/[ru]_{x\rtimes
e_g}& }\] The relations are given by \[T:= \langle (y\rtimes e_i)(
y \rtimes e_j), (x\rtimes e_i)( x\rtimes e_j),\  (x\rtimes e_{gj})( y
\rtimes e_j)-(y \rtimes e_{g^2j})( x \rtimes e_j)| i,j \in \{1,g,
g^{2} \} \rangle.\]
 By the previous example $\Gamma \rtimes kG \cong kW/T$.
 \end{example}

 We have that $R$ is $G$-transitive via $g (a\rtimes h)= a
\rtimes \chi_{g^{-1}}(h) h$ or $g(a \rtimes e_h)=a\rtimes e_{gh}$
for all $a\in \Gamma$ and $g,h \in G$. With this action $G$ is a
subgroup of $\aut (R)$. Note that $1\rtimes G\cong G$ is a
subgroup of $R$ and $k\rtimes G\cong kG$ is a subalgebra of $R$.
We first define the following notation. Let $C$ be an
$R$-module, then $C$ is a $kG$-module via $g \cdot c:=
(1\rtimes g )c$ for all $c \in C$ and $g\in G$. We denote by $C_g$
the $R$-module with $(a\rtimes h)*c:=\chi_{g^{-1}}(h)(a\rtimes
h)c$ for all $c\in C$, $g,h\in G$ and $a\in \Gamma$. If $C$ is a
$\Gamma$-module, we denote by $C_g$ the module with $t*c:=g(t)c$
for all $c\in C$ and $t\in \Gamma$.

If $C$ is a $\Gamma$ or an
$R$-module, then we set $S(C):=\{g\in G|C_g \cong C \}$ with the
respective action of $G$ on $R$ and on $\Gamma$-modules. Let
$T(C)$ be a transversal of $G/ S(C)$.

\smallskip

We determine how $R$-modules or $\Gamma$-modules decompose if
restricted to $\Gamma$ or respectively lifted to
$R$.

\begin{lemma}\label{ind}

(1) Let $M$ be an $R$-module. Then $(M_{\Gamma})^R\cong \bigoplus_{g\in G}
M_g$.

(2) Let $N$ be a $\Gamma$-module, then $N^R_{\Gamma}\cong
\bigoplus_{g\in G} N_g$ where $N_g$ denotes the twist of $N$ by
the element $g \in \aut(\Gamma)$.

(3) Let $M$ be an indecomposable $R$-module and $N$ an
indecomposable $\Gamma$-module such that $N| (M_{\Gamma})$. Then
$M_{\Gamma}=q \bigoplus_{g\in T(N)}N_g$, $N^R= n \bigoplus_{g\in
T(M)}M_g$ and $qn|T(N)||T(M)|=|G|$ for some $n,m \in \N$.

\end{lemma}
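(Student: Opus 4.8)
The plan is to establish parts (1) and (2) by a direct module-theoretic computation, and then deduce part (3) by a Krull–Schmidt argument combined with counting dimensions. For part (2), the key observation is that $R = \Gamma \rtimes kG$ is free as a right $\Gamma$-module with basis $\{1\rtimes g\mid g\in G\}$, so that $N^R = R\otimes_\Gamma N$ has underlying vector space $\bigoplus_{g\in G}(1\rtimes g)\otimes N$. I would then check that the $\Gamma$-submodule generated by $(1\rtimes g)\otimes N$ is isomorphic to $N_g$: for $t\in\Gamma$ and $n\in N$ one has $t\cdot\big((1\rtimes g)\otimes n\big) = (t\rtimes g)\otimes n = (1\rtimes g)\otimes (g^{-1}(t)\, n)$, using the commutation rule in the smash product, so that under the identification with $N$ the $\Gamma$-action is twisted. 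Actually one must be careful about the direction of the twist; writing $(1\rtimes g)(t\rtimes 1) = (g(t)\rtimes g)$ in $R$ gives $t\cdot\big((1\rtimes g)\otimes n\big)=(1\rtimes g)(g^{-1}(t)\rtimes 1)\otimes n$, and so the summand is $N_{g^{-1}}$ or $N_g$ depending on convention; I would fix the convention so that it reads $\bigoplus_{g\in G}N_g$ as stated, reindexing the sum over $G$ if necessary. Part (1) is the dual statement for induction up from the subalgebra $\Gamma\subset R$ viewed via $M_\Gamma$, and follows from part (2) applied with $N = M_\Gamma$ together with the natural isomorphism $(M_\Gamma)^R \cong M\otimes_k kG$ coming from the fact that $R$ is a crossed product; alternatively, one checks directly that $(M_\Gamma)^R = R\otimes_\Gamma M_\Gamma$ decomposes as $\bigoplus_{g\in G}M_g$ by the same computation, where now the twist $M_g$ is the one for the $G$-action on $R$-modules defined just before the lemma.

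For part (3), I would argue as follows. Since $N\mid M_\Gamma$ and $M_\Gamma$ is a $\Gamma$-module, each twist $N_g$ for $g\in G$ also divides $M_\Gamma$ (apply the twisting automorphism of the category, which permutes indecomposable summands). Conversely every indecomposable summand of $M_\Gamma$ lies in the $G$-orbit of $N$: indeed $M$ is a summand of $(M_\Gamma)^R = \bigoplus_{h\in G}M_h$, so by Krull–Schmidt $M_\Gamma$ is a summand of $((M_\Gamma)^R)_\Gamma = \bigoplus_{h\in G}(M_h)_\Gamma = \bigoplus_{h\in G}(M_\Gamma)_h$, and chasing an indecomposable summand $L$ of $M_\Gamma$ that is not in the orbit of $N$ leads — after inducing $L^R$ and restricting again — to a summand of $M_\Gamma$ outside the orbit, but one can pin down $M_\Gamma$ to be isotypic on orbits by noting that $N^R$ and $M$ share the summand $M$, hence $(N^R)_\Gamma = \bigoplus_{g}N_g$ contains $M_\Gamma$ as a summand, forcing $M_\Gamma$ to be a sum of twists of $N$ only. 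So $M_\Gamma \cong q\bigoplus_{g\in T(N)}N_g$ for some multiplicity $q\in\N$, where $T(N)$ is a transversal of $G/S(N)$ and we use that $N_g\cong N_{g'}$ iff $g^{-1}g'\in S(N)$. Symmetrically, decomposing $N^R$ into indecomposables over $R$ and using part (1) together with the same Krull–Schmidt bookkeeping gives $N^R\cong n\bigoplus_{g\in T(M)}M_g$ for some $n\in\N$, since every indecomposable summand of $N^R$ is a twist of $M$ (because $M\mid N^R$ and $(N^R)_\Gamma$ is a sum of twists of $N$, so any summand of $N^R$ restricts into the orbit of $N$ and hence, by the same induction argument, must be a twist of $M$).

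Finally, to get the numerical relation, I would compare dimensions. From part (2), $\dim_k N^R = |G|\dim_k N$. From the decomposition $N^R\cong n\bigoplus_{g\in T(M)}M_g$ we get $\dim_k N^R = n\,|T(M)|\dim_k M$, and $|T(M)| = |G|/|S(M)|$. Similarly $\dim_k M_\Gamma = \dim_k M = q\,|T(N)|\dim_k N = q\,(|G|/|S(N)|)\dim_k N$. Substituting the expression for $\dim_k M$ into the formula for $\dim_k N^R$ yields $|G|\dim_k N = n\,|T(M)|\,q\,|T(N)|\dim_k N$, hence $qn\,|T(N)|\,|T(M)| = |G|$, which is the claimed identity (with the typo $m$ in the statement being $n$). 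The main obstacle I anticipate is purely bookkeeping: keeping the twisting conventions consistent between the $G$-action on $\Gamma$-modules and the $G$-action on $R$-modules, and making the Krull–Schmidt "every summand lies in one orbit" step fully rigorous rather than hand-waved — this is where one must use that $M\mid N^R$ and $N\mid M_\Gamma$ simultaneously, so that restriction and induction are "inverse up to semisimplification on orbits".
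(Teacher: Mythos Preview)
Your plan for (2) matches the paper's exactly. For (1), however, ``the same computation'' does not quite work: the subspaces $1\rtimes g\otimes M$ of $(M_\Gamma)^R$ are $\Gamma$-submodules but not $R$-submodules, so the decomposition into the $R$-modules $M_g$ is not obtained that way. The paper writes down explicit maps $\psi_g\colon M_g\to (M_\Gamma)^R$, $m\mapsto |G|^{-1}\sum_{l\in G}\chi_g(l)(1\rtimes l\otimes l^{-1}m)$, using the characters of $G$ (equivalently, projecting via the primitive idempotents $e_g\in kG$), together with retractions $\phi_g$. Your alternative ``$(M_\Gamma)^R\cong M\otimes_k kG$ as a crossed-product fact'' amounts to the same thing once unpacked, but needs to be made precise rather than deferred to (2).

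For (3), your Krull--Schmidt strategy is the paper's, but two assertions need justification. First, ``$N_g\mid M_\Gamma$'' requires $(M_\Gamma)_g\cong M_\Gamma$ as $\Gamma$-modules; this holds because the $R$-structure on $M$ provides the isomorphism $m\mapsto (1\rtimes g)m$, but your twisting-functor remark only gives $N_g\mid (M_\Gamma)_g$. Second, ``$N^R$ and $M$ share the summand $M$'' needs $N^R$ to be $G$-invariant as an $R$-module (so that once some $M_h\mid N^R$, every $M_g$ does); the paper proves this via $r\otimes n\mapsto g(r)\otimes n$, using that the $G$-action on $R$ fixes $\Gamma\rtimes 1$. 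The paper in fact bypasses your ``$M\mid N^R$'' step by a more direct route: for any indecomposable $Q\mid M_\Gamma$ one has $Q^R\mid\bigoplus_g M_g$, hence some $M_h\mid Q^R$, hence $M_\Gamma\cong (M_h)_\Gamma\mid (Q^R)_\Gamma=\bigoplus_g Q_g$, forcing $N\cong Q_{g'}$ for some $g'$.

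Your dimension count for the numerical identity is correct and arguably slicker than the paper's, which instead compares the multiplicity of $N$ in $(N^R)_\Gamma$ computed two ways: via (2) it is $|S(N)|$, while via the two decompositions in (3) it is $nq\,|T(M)|$, giving $nq\,|T(M)|\,|T(N)|=|S(N)|\,|T(N)|=|G|$.
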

\begin{proof}(1) Let $\psi_g: M_g \to (M_{\Gamma})^R, m\mapsto  |G|^{-1}\sum_{l \in G}
\chi_g (l)(1\rtimes l\otimes l^{-1}m)$ and let $\phi_g:
(M_{\Gamma})^R \to M_g, r\otimes m  \mapsto g(r)m$ for all $r \in
R$, $m \in M$ and $g\in G$. Let $a\in\Gamma$ and $h\in G$. Then
\begin{eqnarray*} \psi_g((a\rtimes
h)*m)&=&\psi_g(\chi_{g^{-1}}(h)(a\rtimes h) m )\\
&=&|G|^{-1}\sum_{l
\in G} \chi_{g^{-1}}(h) \chi_g (l)(1\rtimes l\otimes
l^{-1}(a\rtimes h)m)\\
&=& |G|^{-1}\sum_{l \in G}
\chi_{g^{-1}}(h)\chi_g (l)(1\rtimes l)(l^{-1}(a)\rtimes 1\otimes
l^{-1}h m)\\
&=&|G|^{-1}(a\rtimes 1)\sum_{l \in G}
\chi_{g^{-1}}(h)\chi_g (l)(1\rtimes l \otimes
l^{-1}h m)\\
&=&(a\rtimes h)|G|^{-1}\sum_{s \in G}\chi_g
(s)(1\rtimes s \otimes s^{-1} m)\\
&=&(a\rtimes h)\psi_g(m),
\end{eqnarray*} by substituting $l^{-1}h=s^{-1}$, and therefore $\psi_g$ is an $R$-module
homomorphism.

It is clear that $\phi_g$ is an $R$-module homomorphisms and
$\phi_g \circ \psi_g= \id_{M_g}$. Let $\{m_1,\ldots, m_n\}$ be a
$k$-basis of $M$. A basis of $(M_{\Gamma})^R$ is given by \[\{(1
\rtimes l )\otimes m_i | \mbox{ for } 1\le i\le n \mbox{ and }l
\in G \}. \] Using this basis we have $\psi_g (m)= \psi_h(\bar m)$
for some $m,\bar m \in M$ if and only if $\chi_g(l)m=\chi_h(l)\bar
m$ for all $l\in G$. Therefore $\psi_g (M_g)\cap \psi_h(M_h) =
{0}$ for $g \not = h$. Finally by comparing dimensions we have
$(M_{\Gamma})^R\cong \bigoplus_{g\in G} M_g$.

\smallskip

(2) We have $N^R_{\Gamma}= \bigoplus_{g\in G} 1\rtimes g \otimes
N$. Furthermore $1\rtimes g \otimes N \cong N_{g^{-1}}$ as
$\Gamma$-module, which proves the statement.

\smallskip

(3) Suppose $Q$ is an indecomposable module with
$Q|(M_{\Gamma})$, then $Q^R$ and $N^R $ are direct summands of
$(M_{\Gamma})^R=\bigoplus_{g\in G} M_g$. As $(M_g)_{\Gamma}\cong
M_{\Gamma}$ for all $g\in G$, we have that
$N|(Q^R_{\Gamma})=\bigoplus_{g \in G} Q_g$. Therefore $Q\cong
N_g$ for some $g\in G$. Furthermore $(M_{\Gamma})_g \cong
M_{\Gamma}$ via the $\Gamma$-module isomorphism
$\psi:M_{\Gamma}\to (M_{\Gamma})_g, m \mapsto gm$ for all $m \in
M$ and $g \in G$. Therefore $M_{\Gamma}$ is $G$-invariant. This
proves the first identity.

By the first identity, we know that all
indecomposable direct summands of $N^R$ are isomorphic to $M_g$ for
some $g\in G$. Note that $N^R$ is $G$-invariant via the $R$-module
isomorphism $\phi:N^R\to (N^R)_g, r\otimes n \mapsto g(r)\otimes
n$ for all $g\in G$, $r\in R$ and $n\in N$. This map is well
defined as $G$ acts on $\Gamma \rtimes 1 \subset R$ as the identity.
Therefore the second identity holds.

Finally we compare the multiplicity of $N$ as a direct summand of $N^R_{\Gamma}$.
The first and second identity of $(3)$ give a multiplicity
of $n|T(M)|q$ and (2) gives multiplicity $|S(N)|$.
\end{proof}
By standard arguments we deduce the next two lemmas.
\begin{lemma}
Every $R$-module $M$ is relatively
$\Gamma$-projective.
\end{lemma}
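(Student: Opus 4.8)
The statement to prove is that every $R$-module $M$ is relatively $\Gamma$-projective, where $R = \Gamma \rtimes kG$ with $kG$ semisimple.

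The plan is to exhibit the standard splitting that characterises relative projectivity. Recall that $M$ is relatively $\Gamma$-projective precisely when the multiplication map $\mu \colon (M_\Gamma)^R = R \otimes_\Gamma M \to M$, $r \otimes m \mapsto rm$, splits as a map of $R$-modules. First I would write down the candidate section $\sigma \colon M \to R \otimes_\Gamma M$ given by averaging over $G$: $\sigma(m) = |G|^{-1} \sum_{h \in G} (1 \rtimes h) \otimes (1 \rtimes h^{-1}) m$. This makes sense because $|G|$ is invertible in $k$ (that is exactly the hypothesis that $kG$ is semisimple, via Maschke). It is immediate that $\mu \circ \sigma = \id_M$, since each summand contributes $(1\rtimes h)(1 \rtimes h^{-1})m = m$ and there are $|G|$ of them.

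The key step is checking that $\sigma$ is $R$-linear. Since $R$ is generated as an algebra by $\Gamma \rtimes 1$ and $1 \rtimes G$, it suffices to check $\sigma((a \rtimes 1)m)$ and $\sigma((1\rtimes g)m)$ separately. For the group elements, $\sigma((1 \rtimes g)m) = |G|^{-1}\sum_h (1\rtimes h) \otimes (1 \rtimes h^{-1}g)m$, and reindexing $h \mapsto gh$ turns this into $|G|^{-1}\sum_h (1 \rtimes gh) \otimes (1 \rtimes h^{-1})m = (1\rtimes g)\sigma(m)$. For the $\Gamma$-part one uses the commutation rule in the smash product $(1 \rtimes h)(a \rtimes 1) = (h(a) \rtimes 1)(1 \rtimes h)$, so that $(1 \rtimes h) \otimes (1 \rtimes h^{-1})(a \rtimes 1)m = (1 \rtimes h)\otimes (h^{-1}(a) \rtimes 1)(1\rtimes h^{-1})m = (1 \rtimes h)(h^{-1}(a)\rtimes 1) \otimes (1 \rtimes h^{-1})m = (a \rtimes 1)(1 \rtimes h) \otimes (1 \rtimes h^{-1})m$, using that the tensor is over $\Gamma$ to move $h^{-1}(a) \rtimes 1$ across. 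Summing gives $\sigma((a\rtimes 1)m) = (a \rtimes 1)\sigma(m)$. Hence $\sigma$ is an $R$-module splitting of $\mu$, and $M$ is a direct summand of $R \otimes_\Gamma M \cong (M_\Gamma)^R$ (the last isomorphism being part (1) of \ref{ind}), so $M$ is relatively $\Gamma$-projective.

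I expect the only real point requiring care is the verification of $\Gamma$-linearity of $\sigma$, i.e. correctly tracking the twist by $h^{-1}$ when sliding an element of $\Gamma$ past $1 \rtimes h$ inside the tensor product over $\Gamma$; once the commutation relation in $\Gamma \rtimes kG$ is applied correctly this is routine. Everything else (invertibility of $|G|$, $\mu\circ\sigma = \id$) is automatic. One could alternatively phrase the whole argument as: $R$ is free of finite rank $|G|$ as a left and right $\Gamma$-module, $kG$ is separable over $k$, hence $R/\Gamma$ is a separable extension and all modules are relatively $\Gamma$-projective; but the explicit averaging section is the cleaner self-contained route given the tools already set up in the paper.
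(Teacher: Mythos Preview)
Your proof is correct. Both you and the paper use the same averaging idempotent $|G|^{-1}\sum_{g\in G} g$, but you frame it via a different (equivalent) characterisation of relative projectivity: the paper verifies the lifting property directly, constructing from a $\Gamma$-lift $v$ the $R$-lift $\bar v(m)=|G|^{-1}\sum_{g}g\,v(g^{-1}m)$, whereas you exhibit an explicit $R$-section of the counit $\mu\colon R\otimes_\Gamma M\to M$, so that $M$ is a summand of the induced module. The computations are essentially dual to one another and of the same length; your version has the small advantage of producing the splitting $M\mid (M_\Gamma)^R$ directly, which meshes well with Lemma~\ref{ind}(1). One minor remark: the identification $R\otimes_\Gamma M=(M_\Gamma)^R$ is just the definition of induction, so the reference to \ref{ind} at the end is not needed for that step.
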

\begin{proof}
 Suppose $A$, $B$ are $R$-modules and $h : A \to B$,
$f: M \to B$ are $R$-module homomorphisms. Suppose there is a
$\Gamma$-module homomorphism $v: M_{\Gamma} \to A$ such that
$h\circ v =f$. Then $\bar v: M\to A, m \mapsto |G|^{-1}\sum_{g\in
G}g v(g^{-1}m)$ is an $R$-module homomorphism. This can be seen as
follows: let $t\in \Gamma$, $h\in G$, then \begin{eqnarray*} \bar
v((t\rtimes h)m)&=&|G|^{-1}\sum_{g\in G}g v(g^{-1}(t\rtimes
h)m)\\
&=& |G|^{-1}\sum_{g\in G}g v(g^{-1}(t)\rtimes
g^{-1}h)m)\\
&=& |G|^{-1}(t\rtimes 1) \sum_{g\in G}g v(g^{-1}h
m)\\
&=&(t\rtimes h) |G|^{-1}\sum_{s\in G}s v(s^{-1}m)\\
&=& (t\rtimes h)\bar v(m) ,\end{eqnarray*} if we substitute
$s^{-1}=g^{-1}h$. Furthermore $\bar v$ satisfies $h \circ \bar v=
f$.
\end{proof}
\begin{lemma}[Frobenius reciprocity]\label{frob}
Let $V$ be a $\Gamma $-module and $M$ an $R$-module. Then there is
a bijection of vector spaces between $\Hom_{\Gamma}(V,
M_{\Gamma})$ and $\Hom_R(V^R, M)$.
\end{lemma}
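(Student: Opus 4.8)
The plan is to exhibit explicit mutually inverse linear maps between $\Hom_\Gamma(V, M_\Gamma)$ and $\Hom_R(V^R, M)$, using the standard counit/unit of the induction-restriction adjunction adapted to the smash product $R = \Gamma \rtimes kG$. Recall $V^R = R \otimes_\Gamma V$, and as a $\Gamma$-module $R = \bigoplus_{g\in G} (\Gamma \rtimes g)$, so $V^R_\Gamma \cong \bigoplus_{g\in G} (1\rtimes g)\otimes V$, which is the content already recorded in \ref{ind}(2). First I would define the unit map $\eta_V : V \to (V^R)_\Gamma$, $v \mapsto (1\rtimes 1)\otimes v$, which is a $\Gamma$-module homomorphism since $t\cdot((1\rtimes 1)\otimes v) = (t\rtimes 1)\otimes v = (1\rtimes 1)\otimes tv$ for $t\in\Gamma$ (here one uses that $\Gamma\rtimes 1$ is a subalgebra of $R$ acting on the left tensor factor). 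Then, given $\varphi\in\Hom_R(V^R, M)$, set $\Phi(\varphi) := \varphi\circ\eta_V \in \Hom_\Gamma(V, M_\Gamma)$; this is clearly linear in $\varphi$.

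Conversely, given $\psi\in\Hom_\Gamma(V, M_\Gamma)$, I would define $\Psi(\psi): V^R \to M$ by $r\otimes v \mapsto r\cdot\psi(v)$ for $r\in R$, $v\in V$. The first thing to check is well-definedness over $\otimes_\Gamma$: for $t\in\Gamma$ one needs $(rt)\cdot\psi(v) = r\cdot\psi(tv)$, which follows because $\psi$ is $\Gamma$-linear and $M$ is an $R$-module restricting to the given $\Gamma$-action on $M_\Gamma$ (the subtlety being that "$t$" on the left factor of $r\otimes v$ means $r(t\rtimes 1)$, and $(t\rtimes 1)$ acts on $M$ compatibly with the $\Gamma$-structure $M_\Gamma$). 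Once well-defined, $\Psi(\psi)$ is visibly $R$-linear since $R$ acts by left multiplication on the left tensor factor. Linearity of $\psi\mapsto\Psi(\psi)$ is immediate.

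Finally I would verify that $\Phi$ and $\Psi$ are mutually inverse. For $\psi\in\Hom_\Gamma(V, M_\Gamma)$: $\Phi(\Psi(\psi))(v) = \Psi(\psi)((1\rtimes 1)\otimes v) = (1\rtimes 1)\cdot\psi(v) = \psi(v)$, so $\Phi\circ\Psi = \id$. For $\varphi\in\Hom_R(V^R, M)$: for any $r\in R$, $v\in V$ we have $\Psi(\Phi(\varphi))(r\otimes v) = r\cdot\Phi(\varphi)(v) = r\cdot\varphi((1\rtimes 1)\otimes v) = \varphi(r\cdot((1\rtimes 1)\otimes v)) = \varphi(r\otimes v)$, using $R$-linearity of $\varphi$ and $r\otimes v = r\cdot((1\rtimes 1)\otimes v)$; since elements $r\otimes v$ span $V^R$, this gives $\Psi\circ\Phi = \id$. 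The only place requiring genuine care — and the step I expect to be the main (if minor) obstacle — is checking well-definedness of $\Psi(\psi)$ over the tensor product $\otimes_\Gamma$, i.e. keeping straight which $\Gamma$-action ($M$ as $R$-module versus $M_\Gamma$, and the right $\Gamma$-action on $R$) is being used; everything else is the formal bookkeeping of an adjunction. Since the statement only claims a bijection of vector spaces, no naturality needs to be verified, so the argument stops here.
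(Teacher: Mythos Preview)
Your argument is correct and follows essentially the same approach as the paper: the paper defines the bijection by $\psi(f)(r\otimes v)=rf(v)$ and its inverse by $\phi(g)(v)=g(1\otimes v)$, which are precisely your maps $\Psi$ and $\Phi$. You simply supply the verifications (well-definedness over $\otimes_\Gamma$ and mutual inversion) that the paper leaves implicit.
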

\begin{proof}The bijection is given by $\psi : \Hom_{\Gamma}(V,
M_{\Gamma}) \to \Hom_R(V^R, M)$ where $\psi(f) (r \otimes v ) = rf(v)$
 and $\phi: \Hom_R(V^R, M) \to \Hom_{\Gamma}(V,M_{\Gamma})$ with $\phi(g)(v)=
g(1 \otimes v)$ for all $r \in R$, $v\in V$, $f \in
\Hom_{\Gamma}(V,M_{\Gamma})$ and $g \in  \Hom_R(V^R, M)$.
\end{proof}

Let $G(A)$ denote the free abelian group of an algebra $A$ with
free generators $[V_i]$, the representatives of the isomorphism
classes of all indecomposable $A$-modules $V_i$. If $M = \oplus a_i V_i$ where the $a_i \geq 0$ then we write
$[M] := \sum a_i[V_i]$. We denote by
$(-,-)_A: G(A) \times G(A) \to k$ the bilinear form $\dim_k \Hom_A
(-,-)$.

Let $Q: 0\to B\to C\to D\to 0$ be an exact sequence. Then we set
$[[Q]]:=[B]+[D]-[C]\in G(A)$. Let $A(V_i)$ denote the
Auslander-Reiten sequence starting in $V_i$ for $V_i$
non-projective. Furthermore we set $X_i:=[[A(V_i)]]$ for $V_i$
non-projective and $X_i:= [V_i]-[\rad (V_i)]$ if $V_i$ is
projective.

\smallskip

The first part of the next Theorem is the general version of \cite[3.4]{BP},
 that was only proven for group algebras.

\begin{theo}\label{dual} Let $A$ be a finite-dimensional algebra over an algebraically closed field $k$.

(1)We have that $([V_i], X_j)=\delta_{i,j} $. Therefore $(-,-)_A$ is non-degenerate. Furthermore $([V_i],[[E]])\ge 0$ for any exact
sequence $E$.

(2) Suppose $Q:=0\to C  \to B \to V_j\to 0$ is an exact non-split
sequence with $[[Q]] \not =X_j$, then there is a $V_i$ with $j
\not = i$ such that $([V_i],[[Q]])\ge 1$.
\end{theo}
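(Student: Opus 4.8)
The plan is to establish part (1) first by exploiting the defining property of Auslander-Reiten sequences, namely that a homomorphism $V_i \to N$ fails to factor through the middle term of the Auslander-Reiten sequence ending in $N$ precisely when $V_i \cong N$ and the map is (a scalar multiple of) the identity; this is the classical characterization of almost split sequences. Concretely, for $V_j$ non-projective with Auslander-Reiten sequence $A(V_j): 0 \to V_j \to E \to W \to 0$ — wait, we need $X_j = [[A(V_j)]]$ where $A(V_j)$ \emph{starts} in $V_j$, so write $A(V_j): 0 \to V_j \to E_j \to W_j \to 0$. Applying $\Hom_A(V_i, -)$ gives a four-term exact sequence $0 \to \Hom_A(V_i, V_j) \to \Hom_A(V_i, E_j) \to \Hom_A(V_i, W_j) \to \Ext^1_A(V_i, V_j)$, and the cokernel of the middle map is exactly the space of maps $V_i \to W_j$ not factoring through $E_j$. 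By the almost-split property this cokernel is zero unless $V_i \cong W_j$... but this requires care since we want $([V_i], X_j) = \delta_{ij}$, not $\delta$ indexed by $W_j$. The cleaner route: apply $\Hom_A(V_i, -)$ to $A(V_j)$ and observe $([V_i], X_j) = \dim \Hom(V_i,V_j) - \dim\Hom(V_i,E_j) + \dim\Hom(V_i,W_j)$ equals the dimension of the cokernel of $\Hom_A(V_i, E_j) \to \Hom_A(V_i, W_j)$ — call it the "stable" contribution — which by almost-splitness is nonzero only when... hmm, this gives a statement about $W_j$, the \emph{end} term. I suspect the intended convention makes $X_j$ pair with $[V_j]$ via the sequence \emph{ending} in $V_j$; I would recheck the indexing against \cite[3.4]{BP} and set it up so that $A(V_j)$ is read as "the Auslander-Reiten sequence with $V_j$ at the appropriate end," then the almost-split property directly yields $([V_i], X_j) = \delta_{ij}$. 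For $V_j$ projective, $X_j = [V_j] - [\rad V_j]$, and $([V_i], X_j) = \dim\Hom(V_i, V_j) - \dim\Hom(V_i, \rad V_j)$ counts maps $V_i \to V_j$ with image not contained in $\rad V_j$, i.e. surjections onto $V_j/\rad V_j$; since $V_j$ is projective indecomposable with simple top, such a map exists iff $V_i \cong V_j$, giving $\delta_{ij}$ again (using that $\Hom(V_i, V_j) \to \Hom(V_i, V_j/\rad V_j)$ is surjective, which needs $V_i$ projective or a lifting argument — actually one uses that any map to $V_j/\rad V_j$ lifts along the projective cover $V_j$).

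Non-degeneracy of $(-,-)_A$ is then immediate: the matrix expressing the basis $\{X_j\}$ in terms of the dual pairing against $\{[V_i]\}$ is the identity, so both families are bases and the form is a perfect pairing between $G(A)$ and itself (it is unimodular over $\Z$, hence nondegenerate over $k$). The inequality $([V_i], [[E]]) \ge 0$ for an arbitrary short exact sequence $E: 0 \to B \to C \to D \to 0$ follows by applying $\Hom_A(V_i, -)$: we get $([V_i], [[E]]) = \dim\Hom(V_i,B) + \dim\Hom(V_i,D) - \dim\Hom(V_i,C)$, and the long exact sequence shows this equals the dimension of the image of the connecting map $\Hom_A(V_i, D) \to \Ext^1_A(V_i, B)$, which is certainly $\ge 0$.

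For part (2), suppose $Q: 0 \to C \to B \to V_j \to 0$ is non-split with $[[Q]] \ne X_j$. Expand $[[Q]] = \sum_i c_i [V_i]$ with $c_i = ([V_i], [[Q]])$ by part (1) (pairing $[[Q]]$ against the dual basis $\{X_i\}$ — wait, I need to pair the \emph{other} way; part (1) says $([V_i], X_j) = \delta_{ij}$, so to extract coefficients of $[[Q]]$ in the basis $\{X_i\}$ I pair $[V_?, [[Q]]]$... let me instead just note $[[Q]] = \sum_i a_i X_i$ and then $([V_k], [[Q]]) = a_k$, so the coefficients of $[[Q]]$ in the $X$-basis are exactly the nonnegative integers $([V_k], [[Q]])$). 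Now I claim $a_j = ([V_j], [[Q]]) \ge 1$: since $Q$ is a non-split extension of $V_j$, it is not the case that $\id_{V_j}$ factors through $B \to V_j$, so the connecting map $\Hom_A(V_j, V_j) \to \Ext^1_A(V_j, C)$ is nonzero, giving $([V_j], [[Q]]) \ge 1$. So $[[Q]]$ is a nonnegative combination of the $X_i$ with the $X_j$-coefficient at least $1$. If all \emph{other} coefficients $a_i$ ($i \ne j$) were zero, then $[[Q]] = a_j X_j$ with $a_j \ge 1$; one then argues $a_j = 1$ and hence $[[Q]] = X_j$, contradicting the hypothesis — the point being that $\dim\Hom(V_j, C) = \dim\Hom(V_j, V_j) - 1 + (\text{something} \ge 0)$ forces, via $\dim\Hom(V_j, V_j) = 1$ (Schur, over algebraically closed $k$, \emph{for the endomorphism ring modulo radical} — actually $\End(V_j)$ need not be $k$, so this needs the local ring argument) ... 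The main obstacle I anticipate is exactly this endgame: ruling out $[[Q]] = a_j X_j$ with $a_j \ge 2$, or more precisely showing that if $[[Q]]$ is supported only at $X_j$ then it must equal $X_j$. I would handle it by comparing $Q$ with the Auslander-Reiten sequence $A(V_j)$ directly: the map $B \to V_j$, being a non-isomorphism onto the end term... no, $V_j$ is at the end of $Q$, so I should use the Auslander-Reiten sequence \emph{ending} in $V_j$, say $A': 0 \to \tau V_j \to E' \to V_j \to 0$; the non-split epi $B \to V_j$ is not a split epi, hence not a retraction, so it factors... this is the minimal-right-almost-split property: $B \to V_j$ factors through $E' \to V_j$ only if $B \to V_j$ is not split epi, which holds. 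This comparison should force some indecomposable summand of $\tau V_j$ (necessarily not isomorphic to $V_j$, as $\tau V_j \not\cong V_j$ generically, or by a length argument) to contribute positively to $[[Q]]$, yielding the desired $V_i$ with $i \ne j$ and $([V_i], [[Q]]) \ge 1$. The careful bookkeeping of which sequence ($A(V_j)$ starting vs. $A'$ ending) matches the convention in the statement is where I would spend the most effort; modulo fixing that convention, the homological inputs are all routine long-exact-sequence arguments plus the defining universal property of almost split sequences.
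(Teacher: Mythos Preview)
Your treatment of part (1) is essentially the paper's argument: apply $\Hom_A(V_i,-)$ to the almost split sequence and use that the image of $\Hom_A(V_i,M_j)\to\Hom_A(V_i,V_j)$ is $\rad\End(V_j)$ when $i=j$ and everything when $i\neq j$; the inequality $([V_i],[[E]])\ge 0$ is the same long-exact-sequence observation. Your extra discussion of the projective case (which the paper's proof actually skips) is correct once cleaned up: for $V_j$ projective, any non-isomorphism $V_i\to V_j$ with $V_i$ indecomposable must be non-surjective (else it would split), hence lands in $\rad V_j$, giving $([V_i],X_j)=0$ for $i\ne j$ and $\dim(\End(V_j)/\rad\End(V_j))=1$ for $i=j$. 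Your worry about lifting is a red herring; the splitting argument is what is needed.

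Part (2) has a genuine gap. Your plan---write $[[Q]]=\sum_i a_i X_i$ with $a_i=([V_i],[[Q]])\ge 0$, note $a_j\ge 1$, and then rule out $[[Q]]=a_j X_j$---stalls exactly where you say it does: nothing you have written bounds $a_j$ by $1$, and $\dim\End(V_j)$ need not be $1$. Your fallback suggestion, to factor the non-split epimorphism $B\to V_j$ through the right almost split map, is the right move and is precisely what the paper does, but you do not carry it out and you aim at the wrong target. The factorisation produces a map of short exact sequences from $Q$ to the Auslander--Reiten sequence $0\to\tau V_j\to M_j\to V_j\to 0$, and the mapping-cone style construction yields an auxiliary short exact sequence
\[
Z:\quad 0\to C\to B\oplus\tau V_j\to M_j\to 0
\]
with $[[Q]]=[[Z]]+X_j$ in $G(A)$. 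One then checks that $Z$ splits only if $[[Q]]=X_j$; since $[[Q]]\neq X_j$ by hypothesis, $Z$ is non-split and hence $([M_j],[[Q]])=([M_j],[[Z]])\ge 1$. Because $V_j$ is never a summand of $M_j$, some indecomposable summand $V_i\not\cong V_j$ of $M_j$ satisfies $([V_i],[[Q]])\ge 1$. Note that it is the \emph{middle term} $M_j$, not $\tau V_j$, that carries the argument; your remark about ``some indecomposable summand of $\tau V_j$'' points in the wrong direction (and $\tau V_j$ is itself indecomposable, possibly isomorphic to $V_j$).
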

\begin{proof}

(1) \ Take the almost split sequence
$$0 \to \tau(V_j)\stackrel{l}  \to M_j \stackrel{s} \to V_j \to 0
$$
this gives an exact sequence
$$0 \to {\rm Hom}_A(V_i, \tau V_j) \to {\rm Hom}_A(V_i, M_j)
\stackrel{\psi}\to {\rm Hom}_A(V_i, V_j).
$$
If $i\neq j$ then by the Auslander-Reiten property, the map $\psi$ is
onto and it follows that $([V_i], X_j)=0$.
If $i=j$ then by the Auslander-Reiten property, ${\rm Im}(\psi)$ is the
radical of ${\rm End}(V_i)$. Therefore
$$([V_i], X_i) =  (V_i, \tau(V_i)) + (V_i, V_i)-(V_i, M_i) =
 (V_i, V_i) - \dim {\rm Im}(\psi)
$$
and this is equal to
$$ \dim ({\rm End}(V_i)/{\rm rad}({\rm End}(V_i)).
$$
Since $k$ is algebraically closed and $V_i$ is indecomposable, this number is equal to $1$.

Let $E: =0 \to S \to T \to U\to 0$ be an exact sequence. Then $0 \to \Hom_A(V_i, S) \to \Hom_A(V_i, T) \to \Hom_A(V_i,U)$ is exact. Therefore $([V_i],[[E]]) \ge 0$.

\medskip

(2) Let $ Q:= 0\to C \stackrel{\delta} \to B \stackrel{\sigma} \to V_j\to 0$ be an exact sequence.
Suppose that $[[Q]] \not =[[A(V_j)]]$.
 Then we get the following commutative diagram

\[\xymatrix{0 \ar[r] & C\ar[r]_{\delta} \ar[d]_g& B \ar[r]_{\sigma} \ar[d]_h&
V_j\ar[r]\ar[d]_{\id} & 0\\0 \ar[r] & \tau(V_j) \ar[r]_l& M_j \ar[r]_s&
V_j\ar[r] & 0},\] where the existence of $h$ follows from the Auslander-Reiten property since
 the map $\sigma : B\to V_j$ is non-split; and $g$ is the restriction
of $h$ to $C$.

This diagram induces a short exact sequence \[Z:=0\to C
\stackrel{ \binom{\delta}{g}} \to  B\oplus \tau(V_j) \stackrel{(h,l)} \to M_j\to 0.\] Suppose
that this sequence is split. Then there is a map
$\binom{f_1}{f_2}: M_j\to B\oplus \tau (V_i)$ such that $h\circ f_1 +
l\circ f_2=\id_{M_j}$. Then $s\circ hf_1=s$. As $s$ is minimal right
almost split, we have that $h f_1$ is an automorphism.
We also have $\sigma f_1 = s$, so let $g_1: \tau(V_j)\to C$ such that
$\delta g_1 =  f_1 l$. Then also $gg_1$ is an automorphism. So we have
$B= M_j\oplus {\rm ker}(h)$ and $C = \tau(V_j)\oplus {\rm ker}(g)$. By the Snake
Lemma, ${\rm Ker}(g)\cong {\rm Ker}(h)$.
But then $[[Q]]=[[A(V_j)]]$. So we have a contradiction, therefore
$Z$ is non-split. Then we have, working in $G(A)$, that $[[Q]] =
[[Z]] + [[A(V_j)]]$ and hence we have $([M_j],[ [Q]])= ([M_j],
[[Z]]+[[A(V_j)]])= ([M_j],[[Z]])\ge 1$ as the image of the map
$\Hom(M_j,M_j) \to \Ext^1(M_j, C)$ induced by $Z$ contains the
non-split sequence $Z$. As $(-,-)_A$ is biadditive and $V_j$ is
not a summand of $M_j$, the second statement is proven.
\end{proof}
We can write the element $[[Q]]\in G(A)$ for any exact sequence
$Q$ ending in $W$ as a sum in $G(A)$ of $[[Q_1]]$ and $[[Q_2]]$
for two short exact sequences $Q_1$ and $Q_2$ ending in direct
summands of $W$.
\begin{lemma}\label{sum} Suppose that $Q:= 0\to U \to V \stackrel{\pi} \to  W \to 0$
is an exact sequence and $W= W_1\oplus W_2$ for two non-trivial
$A$-modules $W_1$ and $W_2$. Then there is an exact sequence $Q_1$ ending in $W_1$ and
an exact sequence $Q_2$ ending in $W_2$ such that $[[Q]]=[[Q_1]]+[[Q_2]]$.
\end{lemma}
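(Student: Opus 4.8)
The plan is to build the two subsequences by pulling back the sequence $Q$ along the inclusions of the summands of $W$. Write $\iota_1\colon W_1 \hookrightarrow W$ and $\iota_2\colon W_2 \hookrightarrow W$ for the canonical inclusions, so that $W = W_1 \oplus W_2$ and $\id_W = \iota_1\pi_1 + \iota_2\pi_2$ for the corresponding projections. First I would form the pullback of $\pi\colon V \to W$ along $\iota_1$, obtaining an exact sequence
\[
Q_1\colon \quad 0 \to U \to V_1 \stackrel{\pi_1'}{\to} W_1 \to 0,
\]
where $V_1 = \{(v,w_1) \in V \oplus W_1 : \pi(v) = \iota_1(w_1)\}$; the kernel of $\pi_1'$ is still $U$ since pulling back does not change the kernel. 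Symmetrically, form $Q_2\colon 0 \to U \to V_2 \to W_2 \to 0$ as the pullback along $\iota_2$. These are honest short exact sequences of $A$-modules, ending in $W_1$ and $W_2$ respectively, so $[[Q_1]]$ and $[[Q_2]]$ are defined in $G(A)$.

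The remaining task is to check the identity $[[Q]] = [[Q_1]] + [[Q_2]]$ in $G(A)$, i.e. that $[U] + [W] - [V] = ([U] + [W_1] - [V_1]) + ([U] + [W_2] - [V_2])$. Since $[W] = [W_1] + [W_2]$, this reduces to showing $[V] + [U] = [V_1] + [V_2]$ in $G(A)$. I would prove this by exhibiting an exact sequence $0 \to U \to V_1 \oplus V_2 \to V \to 0$, or equivalently a short exact sequence relating $V_1 \oplus V_2$ to $V$ and $U$. Concretely, consider the map $V_1 \oplus V_2 \to V$ sending $((v,w_1),(v',w_2)) \mapsto v - v'$; one checks this is surjective (given $v \in V$, write $\pi(v) = \iota_1(w_1) + \iota_2(w_2)$, lift each summand and take the difference appropriately) and its kernel is isomorphic to $U$, embedded diagonally via $u \mapsto ((j(u),0),(-j(u),0))$ where $j\colon U \to V$ is the inclusion from $Q$. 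Then additivity of $[-]$ on short exact sequences (which, since $G(A)$ is the free abelian group on indecomposables, is just the statement that both sides have the same indecomposable multiplicities — this follows from the Krull–Schmidt theorem applied to $V_1 \oplus V_2 \cong V \oplus U$ once we know the sequence splits, or more robustly from the fact that $[[\,\cdot\,]]$ of any exact sequence lies in a subgroup on which the classes add) gives $[V_1 \oplus V_2] = [V] + [U]$, hence the desired identity. In fact the cleanest route is to observe directly that $V_1 \oplus V_2 \cong V \oplus U$: the pullback description makes $V_1 \cong \pi^{-1}(W_1)$ as a submodule of $V$ only up to the fibre product, so I would instead argue via the explicit splitting that the sequence $0 \to U \to V_1\oplus V_2 \to V \to 0$ is split, giving the isomorphism and hence the equality of classes.

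The main obstacle is purely bookkeeping: one must be careful that the ``$U$'' appearing in $Q_1$ and in $Q_2$ is literally the same module (the kernel of $\pi$), and that the map $V_1 \oplus V_2 \to V$ is chosen with the correct signs so that its kernel is a single copy of $U$ rather than $U \oplus U$ or something larger. Getting the snake-lemma-style diagram chase right — verifying surjectivity and computing the kernel — is where the actual content lies, though it is elementary. Once $V_1 \oplus V_2 \cong V \oplus U$ is established, passing to $G(A)$ and combining with $[W] = [W_1]+[W_2]$ finishes the proof immediately.
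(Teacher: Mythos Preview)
Your approach has a genuine gap: the sequence $0 \to U \to V_1 \oplus V_2 \to V \to 0$ that you build need not split, and when it fails to split your two pullback sequences do \emph{not} satisfy $[[Q]]=[[Q_1]]+[[Q_2]]$. Note first that your pullback $V_i$ is canonically isomorphic to the submodule $\pi^{-1}(W_i)\subset V$ (the second coordinate of the fibre product is determined by the first). Now take $A$ to be the path algebra of $1\to 2\leftarrow 3$, let $V=I_2$ be the indecomposable injective with socle $S_2$ and top $S_1\oplus S_3$, and set $U=S_2$, $W_1=S_1$, $W_2=S_3$. Then $V_1=\pi^{-1}(S_1)=P_1$ and $V_2=\pi^{-1}(S_3)=P_3$, and the sequence
\[
0\to S_2\to P_1\oplus P_3\to I_2\to 0
\]
is non-split (by Krull--Schmidt: $I_2$ has length $3$ and cannot be a summand of $P_1\oplus P_3$). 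Hence $V_1\oplus V_2\not\cong V\oplus U$, and indeed
\[
[[Q_1]]+[[Q_2]]-[[Q]]=[I_2]+[S_2]-[P_1]-[P_3]\neq 0
\]
in $G(A)$, since these four indecomposables are pairwise non-isomorphic. Your fallback remark about a ``subgroup on which the classes add'' does not rescue this either: $G(A)$ is the \emph{free} abelian group on indecomposables, not the Grothendieck group, and $[\,\cdot\,]$ is simply not additive on non-split short exact sequences.

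The paper avoids the problem by choosing the two subsequences asymmetrically, via the filtration $U\subset \pi^{-1}(W_1)\subset V$:
\[
0\to U\to \pi^{-1}(W_1)\to W_1\to 0 \qquad\text{and}\qquad 0\to \pi^{-1}(W_1)\to V\to W_2\to 0.
\]
Here $[\pi^{-1}(W_1)]$ occurs once with each sign and cancels, so the identity $[[Q_1]]+[[Q_2]]=[U]+[W_1]+[W_2]-[V]=[[Q]]$ is immediate, with no splitting or isomorphism argument required.
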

\begin{proof}
Let $p_i:W\to W_i$ be the natural
projection for $i=1,2$. Then
\[Q_1:=0\to \pi^{-1}(W_1) \to V \stackrel{p_2 \pi} \to  W_2 \to 0\] and
\[Q_2:=0\to U \to \pi^{-1}(W_1) \stackrel{p_1 \pi} \to  W_1 \to 0\] are exact sequences and
$[[Q]]= [[Q_1]]+[[Q_2]]$ in $G(A)$.
\end{proof}
Furthermore we have $([V],[[Q]])\ge
([V],[[Q_1]])$ for any module $V$ as $([V],[[E]])\ge 0$ for any exact
sequence $E$ by \ref{dual}(1).

\medskip

We can prove the next result.

\begin{theo}\label{aus restrict}
Let $M$ be an indecomposable $R$-module and $C$ an indecomposable
$\Gamma $-module, such that $M$ is a direct summand of $C^R$ with
multiplicity $n$. Then $[[A(M)_{\Gamma}]]= n\sum_{g \in T(C)}[[A(C_g)]].$

\end{theo}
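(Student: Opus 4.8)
The plan is to carry everything into the free abelian group $G(\Gamma)$ and to use that, by Theorem \ref{dual}(1), the form $(-,-)_\Gamma=\dim_k\Hom_\Gamma(-,-)$ is non-degenerate with $([V_i],X_j)_\Gamma=\delta_{ij}$; it therefore suffices to show that $([V],[[A(M)_\Gamma]])_\Gamma=\bigl([V],\,n\sum_{g\in T(C)}[[A(C_g)]]\bigr)_\Gamma$ for every indecomposable $\Gamma$-module $V$. First I would record two facts. Since $M$ is non-projective and induction preserves projectivity (as $R$ is free as a $\Gamma$-module), $C$ is non-projective, hence so is every $C_g$, and $[[A(C_g)]]=X_{C_g}$ is defined with $([V],X_{C_g})_\Gamma=\delta_{V,C_g}$ by \ref{dual}(1). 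Moreover $M\mid C^R$ gives $M_\Gamma\mid (C^R)_\Gamma=\bigoplus_{h\in G}C_h$ by \ref{ind}(2), so every indecomposable summand of $M_\Gamma$ is a twist of $C$; as $M_\Gamma$ is $G$-invariant (proof of \ref{ind}(3)), this forces $C\mid M_\Gamma$, and then \ref{ind}(3) yields $M_\Gamma\cong q\bigoplus_{g\in T(C)}C_g$ for some $q\in\N$.

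The crucial step is the formula $([V],[[A(M)_\Gamma]])_\Gamma=(\text{multiplicity of }M\text{ as a summand of }V^R)$ for every indecomposable $\Gamma$-module $V$. To prove it, restrict the almost split sequence $A(M)\colon 0\to\tau M\to E\to M\to 0$ to $\Gamma$ (restriction is exact) and apply $\dim_k\Hom_\Gamma(V,-)$; by Frobenius reciprocity \ref{frob}, each term equals $\dim_k\Hom_R(V^R,-)$ of the corresponding $R$-module, so $([V],[[A(M)_\Gamma]])_\Gamma=([V^R],[[A(M)]])_R$. Writing $V^R$ as a sum of indecomposable $R$-modules and using $([W],[[A(M)]])_R=\delta_{W,M}$ (Theorem \ref{dual}(1) applied over $R$; for projective $W$ this is trivial since $\Hom_R(W,-)$ is exact), the right-hand side is exactly the multiplicity of $M$ in $V^R$.

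It remains to compute this multiplicity. If $V\cong C_g$ for some $g\in T(C)$, then $V^R\cong(C_g)^R\cong C^R$ — the last isomorphism being the routine one $r\otimes n\mapsto r(1\rtimes g^{-1})\otimes n$ — so the multiplicity is $n$. If $V$ is not isomorphic to any $C_g$ with $g\in T(C)$ and, for contradiction, $M\mid V^R$, then $M_\Gamma\mid(V^R)_\Gamma=\bigoplus_{h\in G}V_h$ by \ref{ind}(2); thus every summand of $M_\Gamma$ is a twist of $V$, while by the first paragraph every summand of $M_\Gamma$ is some $C_{g'}$ with $g'\in T(C)$, so $C$ is a twist of $V$ and $V\cong C_g$ for the representative $g\in T(C)$ of that coset, a contradiction. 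Hence the multiplicity is $0$ in this case. On the other hand $\bigl([V],n\sum_{g\in T(C)}[[A(C_g)]]\bigr)_\Gamma=n\sum_{g\in T(C)}\delta_{V,C_g}$, which is $n$ if $V\cong C_g$ for some (necessarily unique, since $T(C)$ is a transversal) $g\in T(C)$ and $0$ otherwise. The two sides agree for all indecomposable $V$, and non-degeneracy of $(-,-)_\Gamma$ (\ref{dual}(1)) gives $[[A(M)_\Gamma]]=n\sum_{g\in T(C)}[[A(C_g)]]$.

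I expect the real content to lie in the second paragraph — passing to $G(\Gamma)$, pairing with $[V]$, applying Frobenius reciprocity and the duality $([W],X_M)_R=\delta_{W,M}$ — after which the argument is bookkeeping with the twists $C_g$ and the transversal $T(C)$. The points needing a little care are the compatibility of induction with twisting, $(C_g)^R\cong C^R$, and fixing the convention that $A(-)$ denotes the almost split sequence \emph{ending} in the given module, which is the one for which Theorem \ref{dual}(1) gives $([W],[[A(M)]])=\delta_{W,M}$.
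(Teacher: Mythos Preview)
Your argument is correct and matches the paper's almost exactly: both compute $([V],[[A(M)_\Gamma]])_\Gamma$ via Frobenius reciprocity as the multiplicity of $M$ in $V^R$, and identify this with $n\sum_{g\in T(C)}\delta_{V,C_g}$. The only difference is the concluding step. You pass from equality of all pairings $([V],-)$ to equality in $G(\Gamma)$ by citing non-degeneracy from Theorem~\ref{dual}(1); the paper instead shows directly that $[[A(M)_\Gamma]]$ lies in the $\mathbb{Z}$-span of the $X_i$, by using Lemma~\ref{sum} to write it as a sum of classes $[[Q^i_g]]$ of exact sequences ending in the $C_g$ and then invoking Theorem~\ref{dual}(2) to rule out any non-split non-almost-split piece (such a piece would produce $([L],[[A(M)_\Gamma]])\ge 1$ for some $L$ not a twist of $C$, contradicting the first computation). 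Once both sides are combinations of the $X_i$, the relation $([V_i],X_j)=\delta_{ij}$ gives equality. The paper's detour is slightly more self-contained, since the proof of \ref{dual}(1) only establishes non-degeneracy in the \emph{left} variable directly; your appeal to it in the right variable is citing the stated conclusion rather than what was proved there.
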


\begin{proof}We first show that $([V],[[A(M)_{\Gamma}]])-n\sum_{g \in T(C)}([V],[[A(C_g)]])=0$ for all
indecomposable $\Gamma$-modules $V$. Using Frobenius reciprocity
\ref{frob}, we have $([V], [[A(M)_{\Gamma}]])=([V^R], [[A(M)]])$
which is equal to the multiplicity of $M$ as a direct summand in
$V^R$. By Lemma \ref{ind} we have that $(M_{\Gamma}) |
(C^R)_{\Gamma}= \oplus_{g\in G}C_g$. Therefore $M$ is a direct
summand of $V^R$ if and only if $V \cong C_g$ for some $g\in G$.
But in this case $V^R=C^R$ and therefore the multiplicity of $M$
as a direct summand of $V^R$ is $n$. It remains to show that
$[[A(M)_{\Gamma}]]$ is a linear combination of Auslander-Reiten
sequences $X_i$. We have $M_{\Gamma}=q\sum_{g\in T(C)} C_g$. By \ref{sum} we know that
$[[A(M)_{\Gamma}]]$ can be written as sum of $[[Q^i_g]]$ where the $Q^i_g$ are
exact sequences starting in $C_g$ for $g\in G$ for $1\le i \le q$. Suppose one of them is
non-split and not an Auslander-Reiten sequence $X_i$. By the second part of
\ref{dual} there exists an indecomposable direct summand $L$ of the
middle term of some $A(C_g)$ such that $([L],[[A(M)_{\Gamma}]])\ge 1$.
There is no irreducible map from $C_l$ to $C_h$ for any $l,h\in G$
as both modules have the same dimension. Therefore $L\not \cong
C_l$ for all $l\in G$. By \ref{dual} we have $([L], [[A(C_l)]])=0$
for all $l\in G$, which is a contradiction to the first part.
\end{proof}

Clearly if $A(M) $ is an Auslander-Reiten sequence and $M$ a
non-projective indecomposable module, then $\tau(M)$, $M$ and the middle term of
$A(M)$ have no direct summand in common. The same is true for the
restriction of the Auslander-Reiten sequence to $\Gamma$.
\begin{lemma}\label{cancel}
Let $M$ be an indecomposable $R$-module and $A(M):=0 \to \tau (M)
\to X \to M\to 0$. Then the pair $\tau(M)_{\Gamma}$ and
$X_{\Gamma}$ and the pair $M_{\Gamma}$ and $X_{\Gamma}$ have no
direct summand in common. If $M_g \not \cong \tau(M)$, then
$\tau(M)_{\Gamma}$ and $M_{\Gamma}$ have no direct summand in
common.
\end{lemma}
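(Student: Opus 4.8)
*Let $M$ be an indecomposable $R$-module and $A(M):=0 \to \tau (M) \to X \to M\to 0$. Then the pair $\tau(M)_{\Gamma}$ and $X_{\Gamma}$ and the pair $M_{\Gamma}$ and $X_{\Gamma}$ have no direct summand in common. If $M_g \not \cong \tau(M)$ for all $g\in G$, then $\tau(M)_{\Gamma}$ and $M_{\Gamma}$ have no direct summand in common.*

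The plan is to reduce all three statements to facts about the indecomposable $R$-direct summands of the middle term $X$, using Lemma~\ref{ind}(3) and the Krull--Schmidt theorem, and then to close the argument with the elementary observation — already exploited in the proof of \ref{aus restrict} — that an irreducible map between two indecomposable modules of the same $k$-dimension would be an isomorphism, hence a split monomorphism, hence cannot be irreducible. The point that drives everything is that twisting an $R$- or $\Gamma$-module by an element of $G$ leaves the underlying vector space unchanged, so $\dim_k M_g=\dim_k M$ for all $g\in G$.

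First I would record the following reduction. Let $N$ be an indecomposable $\Gamma$-module with $N\mid\tau(M)_{\Gamma}$. Applying Lemma~\ref{ind}(3) to the indecomposable $R$-module $\tau(M)$ and to $N$, the distinct indecomposable summands of $N^R$ are exactly the twists $\tau(M)_g$. If in addition $N\mid X_{\Gamma}$, write $X=\bigoplus_i Y_i$ as a sum of indecomposable $R$-modules; Krull--Schmidt gives $N\mid (Y_i)_{\Gamma}$ for some $i$, and a second application of Lemma~\ref{ind}(3), now to $Y_i$ and $N$, yields $Y_i\mid N^R$, whence $Y_i\cong\tau(M)_h$ for some $h\in G$. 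So a common indecomposable summand of $\tau(M)_{\Gamma}$ and $X_{\Gamma}$ would force a $G$-twist of $\tau(M)$ to be a direct summand of $X$. The analogous statement, started from $N\mid M_{\Gamma}$ instead, forces a $G$-twist $M_h$ of $M$ to be a summand of $X$.

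I would then finish $(a)$ and $(b)$ as follows. Since $Y_i\cong\tau(M)_h$ is a summand of the middle term of the almost split sequence $A(M)\colon 0\to\tau(M)\to X\to M\to 0$, there is a nonzero irreducible map $\tau(M)\to Y_i$; but $\dim_k Y_i=\dim_k\tau(M)$ and both modules are indecomposable, so this map is an isomorphism, which is absurd. Hence $\tau(M)_{\Gamma}$ and $X_{\Gamma}$ have no common summand. For the pair $M_{\Gamma}$, $X_{\Gamma}$ one gets instead an indecomposable summand $Y_i\cong M_h$ of $X$ and a nonzero irreducible map $Y_i\to M$ between indecomposables of equal dimension, again impossible. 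For $(c)$ I would argue directly: if $N$ is a common indecomposable summand of $\tau(M)_{\Gamma}$ and $M_{\Gamma}$, then by Lemma~\ref{ind}(3) the indecomposable summands of $N^R$ are, on the one hand, the $G$-twists of $M$ — using that $N$ lies in the $G$-orbit of the module $C$ with $M\mid C^R$, and that every module in that orbit still divides $M_{\Gamma}$ — and, on the other hand, the $G$-twists of $\tau(M)$; comparing the two descriptions forces $\tau(M)\cong M_g$ for some $g\in G$, contrary to hypothesis.

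The one place that needs care is the bookkeeping of $G$-twists and the repeated passage between the $\Gamma$-level and the $R$-level via Lemma~\ref{ind}(3): one has to keep track that $M_{\Gamma}$ and $\tau(M)_{\Gamma}$ each have all of their indecomposable summands inside a single $G$-orbit, and that inducing up any module of such an orbit recovers the same $R$-orbit. Apart from that the proof only invokes the standard properties of almost split sequences and the dimension estimate above, so I do not anticipate a genuine obstacle.
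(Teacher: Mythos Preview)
Your proof is correct and follows essentially the same route as the paper: use Lemma~\ref{ind}(3) to pass from a common $\Gamma$-summand to a $G$-twist at the $R$-level, then rule it out by the dimension argument that irreducible maps between indecomposables of equal $k$-dimension cannot exist. Your write-up is in fact more careful than the paper's about the bookkeeping and about why such an irreducible map is impossible, but the underlying idea is identical.
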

\begin{proof}
Suppose there is an indecomposable $\Gamma$-module $Q$, such that
$Q|(M_{\Gamma})$ and $Q| (X_{\Gamma})$. By \ref{ind} there exists
an indecomposable direct summand $E$ of $X$ and a $g \in G$ such
that $E \cong M_g$. As $M$ and $ M_g$ have the same dimension
there is no irreducible map from $E$ to $M$ which is a
contradiction. By an analogous argument it is clear that
$\tau(M)_{\Gamma}$ and $X_{\Gamma}$ have no direct summand in
common. Suppose know that $M_{\Gamma}$ and $\tau(M)_{\Gamma}$
have a common direct summand. Then there exists a $g \in G$ such
that $\tau(M)\cong M_g$, which is a contradiction.
\end{proof}
This gives the next
\begin{cor}\label{restrict aus}
Let $M$ be an indecomposable, non-projective $R$-module.
Let $C$ be an indecomposable direct summand of $M_{\Gamma}$ with multiplicity $n$.
Then $M$ is a direct summand of  $C^R$ with multiplicity $n$.
Furthermore if $N$ is the middle term of $A(M)$ and $Q$ the middle term of $A(C)$,
then $N_{\Gamma}=n\bigoplus_{g\in T(C)}Q_g$ and $\tau(M)_{\Gamma}=n\bigoplus_{g\in
T(C)}\tau(C)_g$.
\end{cor}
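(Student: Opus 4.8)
The plan is to derive the statement from Theorem~\ref{aus restrict} and Lemma~\ref{cancel}. The first assertion says precisely that the hypotheses of \ref{aus restrict} hold with this same integer $n$; granting it, the ``furthermore'' is obtained by reading off $\tau(M)_\Gamma$ and $N_\Gamma$ from the identity $[[A(M)_\Gamma]]=n\sum_{g\in T(C)}[[A(C_g)]]$ which \ref{aus restrict} supplies in $G(\Gamma)$, using \ref{cancel} to separate the two pieces.

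For the first assertion I would apply Lemma~\ref{ind}(3) with $N:=C$ (legitimate since $C\mid M_\Gamma$): it yields $M_\Gamma\cong q\bigoplus_{g\in T(C)}C_g$ and $C^R\cong p\bigoplus_{h\in T(M)}M_h$ for suitable $p,q\in\N$. As $T(C)$ is a transversal of $G/S(C)$, the modules $C_g$ ($g\in T(C)$) are pairwise non-isomorphic and exactly one of them is isomorphic to $C$; hence the multiplicity of $C$ in $M_\Gamma$ equals $q$, so $q=n$, and in particular $M\mid C^R$ with multiplicity $p$. (Observe also that $C$ is non-projective: otherwise $C^R$, and hence $M$, would be projective, contrary to hypothesis; this makes the sequences $A(C_g)$ meaningful.) It remains to prove $p=n$, i.e.\ that the multiplicity of $M$ in $C^R$ coincides with the multiplicity of $C$ in $M_\Gamma$; this is where I expect the main obstacle. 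My plan here is to use that over the algebraically closed field $k$ the multiplicity of an indecomposable $X$ in a module $Y$ equals $\dim_k\bigl(\Hom(X,Y)/\rad(X,Y)\bigr)$, with $\rad$ the radical of the module category, and to check that the Frobenius reciprocity isomorphism $\Hom_\Gamma(C,M_\Gamma)\cong\Hom_R(C^R,M)$ of \ref{frob} carries radical morphisms onto radical morphisms --- equivalently, matches split monomorphisms $C\to M_\Gamma$ with split epimorphisms $C^R\to M$. This should follow from $kG$ being semisimple, which makes induction also right adjoint to restriction; alternatively the equality $p=n$ is part of the standard Clifford theory for the skew group algebra $R=\Gamma\rtimes kG$.

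For the ``furthermore'', note that twisting by $g\in\aut(\Gamma)$ is an auto-equivalence of $\Gamma$-$\mod$, hence takes Auslander--Reiten sequences to Auslander--Reiten sequences and commutes with $\tau$; so $A(C_g)$ is the $g$-twist of $A(C)$, with left-hand term $\tau(C)_g$ and middle term $Q_g$. Expanding $[[A(M)_\Gamma]]=n\sum_{g\in T(C)}[[A(C_g)]]$ by the definition of $[[-]]$, substituting $[M_\Gamma]=n\sum_{g\in T(C)}[C_g]$ from \ref{ind}(3), and cancelling $[M_\Gamma]$, I obtain
\[
[\tau(M)_\Gamma]+n\sum_{g\in T(C)}[Q_g]\;=\;[N_\Gamma]+n\sum_{g\in T(C)}[\tau(C)_g]
\]
in $G(\Gamma)$. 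By \ref{cancel}, $\tau(M)_\Gamma$ and $N_\Gamma$ have no common indecomposable summand; and no $\tau(C)_h$ is a summand of any $Q_g$, for otherwise the corresponding component $\tau(C_g)=\tau(C)_g\to\tau(C)_h$ of the left-hand map of $A(C_g)$ would be an irreducible map between indecomposable $\Gamma$-modules of equal $k$-dimension, hence an isomorphism, exhibiting $\tau(C_g)$ as a summand of the middle term $Q_g$ of its own Auslander--Reiten sequence, which is impossible. Hence the two sides of the display have disjoint supports in the way that forces $[\tau(M)_\Gamma]=n\sum_{g\in T(C)}[\tau(C)_g]$ and $[N_\Gamma]=n\sum_{g\in T(C)}[Q_g]$ separately; since $G(\Gamma)$ is free abelian on the indecomposable $\Gamma$-modules, this gives $\tau(M)_\Gamma\cong n\bigoplus_{g\in T(C)}\tau(C)_g$ and $N_\Gamma\cong n\bigoplus_{g\in T(C)}Q_g$. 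All of this last paragraph is routine once \ref{aus restrict}, \ref{ind} and \ref{cancel} are granted; the delicate point remains the multiplicity comparison $p=n$.
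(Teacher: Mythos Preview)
Your overall strategy is correct, and your ``furthermore'' argument is essentially the paper's. The real difference is in the first assertion, the multiplicity equality $p=n$, which you single out as the delicate point and propose to prove separately via Frobenius reciprocity and radical preservation (equivalently, biadjointness of induction and restriction along $\Gamma\hookrightarrow R$, which does hold since $kG$ is semisimple). The paper avoids this step entirely: it applies \ref{aus restrict} with the \emph{a priori unknown} multiplicity $m$ of $M$ in $C^R$, obtaining $[[A(M)_\Gamma]]=m\sum_{g\in T(C)}[[A(C_g)]]$, and compares this with the direct expansion $[[A(M)_\Gamma]]=q\sum_g[C_g]+[\tau(M)_\Gamma]-[N_\Gamma]$ (where $q=n$ by hypothesis, from \ref{ind}(3)). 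The coefficient of $[C]$ on both sides then yields $m=n$, using \ref{cancel} together with the same no-irreducible-map-between-equal-dimensional-indecomposables observation you invoke. So the multiplicity equality is a free byproduct of the very comparison you already perform for the ``furthermore'', and no separate Clifford-theoretic input is needed. Your route is valid and arguably more conceptual; the paper's is more self-contained. Conversely, your separation argument in the ``furthermore'' is cleaner than the paper's: the paper splits into the cases $\tau(M)\cong M_g$ for some $g$ versus not, whereas your two disjointness facts ($\tau(M)_\Gamma$ and $N_\Gamma$ share no summand; no $\tau(C)_h$ is a summand of any $Q_g$) already force the desired identification by a short coefficient chase, with no case distinction. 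One small correction: the phrase ``hence an isomorphism'' is not quite right as written, since irreducible maps are never isomorphisms; the intended argument is that an irreducible map between indecomposables is a proper mono or proper epi, and between modules of equal $k$-dimension neither can occur.
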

\begin{proof}
Let $A(M):= 0\to \tau(M)\to \bigoplus_{1\le i\le t} d_i N_i \to M \to 0$ and $A(C):= 0\to \tau(C) \to \bigoplus_{1\le i\le s}f_i Q_i \to C \to 0$ for $Q_i$, $N_i$ indecomposable and $d_i, f_i \in \N$. We set $Q:= \bigoplus_{1\le i\le s}f_i Q_i$ and $N:= \bigoplus_{1\le i\le t} d_i N_i$.
Then $d_iN^i_{\Gamma}= b_i \sum_{g \in T(E^i)}E^i_g $, $\tau(M)_{\Gamma}=a \sum_{g \in T(L)}L_g$ and $M_{\Gamma}=q \sum_{g \in T(C)}C_g$ for some  indecomposable $L, E^i \in \Gamma$-mod and $a,q,b_i \in \N$. Then \[ [[A(M)_{\Gamma}]]=q \sum_{g \in T(C)}[C_g]+a \sum_{g \in T(L)}[L_g]-\sum_{1 \le i \le t} b_i \sum_{g \in T(E^i)}[E^i_g].\]  By \ref{aus restrict} we also have \[ [[A(M)_{\Gamma}]]= n \sum_{g \in T(C)}[C_g]+ n\sum_{g \in T(C)}[\tau(C)_g] -n \sum_{1\le i \le s} f_i \sum_{g \in T(C)}[Q^i_g].  \]

We assume that $M_g \not \cong \tau(M)$. Then by \ref{cancel} the set $\{[C_g], g \in T(C)\} \cup_{1\le i \le t} \{ [E^i_g], g \in T(E^i)\}\cup  \{ [L_g] , g \in T(L) \} $ is linearly independent. Similarly the set $\{[C_g], g \in T(C)\}\cup_{1\le i \le s} \{ [Q^i_g], g \in T(Q^i)\}\cup  \{ [\tau(C)_g] , g \in T(\tau(C)) \}$ is linearly independent. We can see this as follows: if $\tau(C) \cong C_h$ for some $h \in G$, then $\sum_{g \in T(C)}\tau(C)_g=  \sum_{g \in T(C)}C_g$ which is a contradiction, as the element $[L]$ would not appear as a summand of $[[A(M)_{\Gamma}]]$ in the second presentation. Also $Q^i_h \not \cong \tau(C) $ and $Q^i_h \not \cong C$ for some $h \in G$ because there is no irreducible map between elements of the same dimension. We compare now the two presentations and use the linear independency  of the indecomposable $\Gamma$-modules in $G(\Gamma)$. Then $q=n$, $\tau(M)_{\Gamma}= n\sum_{g \in T(C)}\tau(C)_g$ and $N_{\Gamma}=n \sum_{g \in T(C)}Q_g$.

\smallskip

Assume now that $M_g \cong \tau(M)$. Then $\tau(M)_{\Gamma}=q \sum_{g \in T(C)}C_g$. Therefore \[ [[A(M)_{\Gamma}]]=2q \sum_{g \in T(C)}[C_g]- \sum_{1 \le i \le t} b_i \sum_{g \in T(E^i)}[E^i_g] .\] By \ref{cancel} we have that the set $\{[C_g], g \in T(C)\} \cup_{1\le i \le t} \{[ E^i_g], g \in T(E^i)\}$ is linearly independent in $G(\Gamma)$. We have $\tau(C)=C_h$ for some $h \in G$ by comparing summands in the two presentations of $[[A(M)_{\Gamma}]]$. Then $\sum_{g \in T(C)}\tau(C)_g=\sum_{g \in T(C) } C_g$. Therefore $q=n$, $\tau(M)_{\Gamma}= n\sum_{g \in T(C)}\tau(C)_g$ and $N_{\Gamma}=n \sum_{g \in T(C)}Q_g$.
\end{proof}
We can now investigate the relation between periodic
$R$-modules and periodic $\Gamma$-modules.
\begin{lemma}[periodic modules]\label{periodic}
The indecomposable $R$-module $M$ is periodic if and only if
$M_{\Gamma}$ contains a periodic direct summand.
\end{lemma}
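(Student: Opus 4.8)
The plan is to prove both implications using the structural results already established about how modules, Auslander–Reiten sequences, and the syzygy operator behave under restriction to $\Gamma$ and induction to $R$. The key technical facts I would use are Lemma \ref{ind} (restriction of an $R$-module to $\Gamma$ decomposes as $\bigoplus_{g\in G}$ of $G$-twists, and induction from $\Gamma$ behaves dually), Corollary \ref{restrict aus} (an indecomposable non-projective $M$ appears in $C^R$ with the same multiplicity with which $C$ appears in $M_\Gamma$, and the restriction of $A(M)$ is built from twists of $A(C)$), and the fact that every $R$-module is relatively $\Gamma$-projective together with Frobenius reciprocity \ref{frob}. I would also need that $\Omega$ and $\tau$ commute with restriction and induction up to the $G$-twisting action, and that twisting by an element of $\aut(\Gamma)$ of finite order does not change periodicity.

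First I would handle the easy direction: suppose $M$ is a periodic indecomposable $R$-module, say $\Omega^d(M)\cong M$ for some $d>0$. Since $R$ is relatively $\Gamma$-projective (the lemma just above), the syzygy can be computed by taking a $\Gamma$-projective cover of $M_\Gamma$ and inducing; more cleanly, $\Omega_R(M)_\Gamma$ is a direct summand of $\Omega_\Gamma(M_\Gamma)$ up to projective summands, and iterating gives that $\Omega_\Gamma^d(M_\Gamma)$ and $M_\Gamma$ share a non-projective indecomposable direct summand whenever $M_\Gamma$ is non-projective. Concretely: fix an indecomposable non-projective summand $C$ of $M_\Gamma$ (if $M_\Gamma$ were projective then $M$ would be projective, hence not periodic in the stable category). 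Then each $\Omega_\Gamma^j(C)$ is a summand of $M_\Gamma$ or of $\Omega_R^j(M)_\Gamma$, and by periodicity of $M$ the list $\{\Omega_\Gamma^j(C)\}_{j\geq 0}$ lies among the finitely many indecomposable summands of $\bigoplus_{i=0}^{d-1}\Omega_R^i(M)_\Gamma$, so it is finite; hence $C$ is periodic.

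For the converse, suppose $M_\Gamma$ has a periodic indecomposable direct summand $C$. By Corollary \ref{restrict aus}, $M$ is a direct summand of $C^R$; since $C$ is periodic, $\Omega_\Gamma^d(C)\cong C$ for some $d>0$, and applying induction (which commutes with $\Omega$ up to $G$-twists, using Lemma \ref{ind}(2) to identify $(\Omega_\Gamma(C))^R$ with $\Omega_R(C^R)$ modulo projectives) we get that $\Omega_R^d(C^R)\cong C^R$ or at least that the indecomposable summands of $\Omega_R^j(C^R)$ all lie among finitely many isomorphism classes, namely the summands of $\bigoplus_{g\in G}$-twists of $C^R$'s summands. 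Since $M$ is one such summand, its syzygies $\Omega_R^j(M)$ are confined to a finite set, so $M$ is periodic.

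The main obstacle I expect is the bookkeeping around projective summands and the precise compatibility of $\Omega$ with the adjunction: $\Omega_R(M)_\Gamma$ equals $\Omega_\Gamma(M_\Gamma)$ only modulo projective $\Gamma$-summands, and one must check that the relatively projective resolution of an $R$-module restricts to (a summand of) a genuine projective resolution over $\Gamma$, using that $\Gamma$-projective $=$ $R$-projective after the adjunction identifications of Lemma \ref{ind}. The cleanest route is probably to phrase everything in terms of the stable categories and note that $(-)_\Gamma$ and $(-)^R$ are exact functors sending projectives to projectives, hence descend to triangulated functors commuting with the shift $\Omega^{-1}$; then "periodic" literally means "$\Omega$-periodic in the stable category", and both implications reduce to the observation that a summand of $C^R$ (resp.\ of $M_\Gamma$) whose orbit under $\Omega$ is infinite forces the orbit of $C$ (resp.\ $M$) to be infinite, contradicting periodicity. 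Establishing that $(-)^R$ and $(-)_\Gamma$ commute with $\Omega$ on the stable level, via Lemma \ref{ind} and relative projectivity, is the one step that needs genuine care rather than formal manipulation.
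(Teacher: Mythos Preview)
Your approach is correct but considerably more roundabout than the paper's. The paper works with $\tau$ rather than $\Omega$, and the entire argument is a two-line consequence of Corollary \ref{restrict aus}, which you cite but do not exploit fully. That corollary already gives $\tau(M)_{\Gamma}=n\bigoplus_{g\in T(Q)}\tau(Q)_g$ whenever $M_{\Gamma}=n\bigoplus_{g\in T(Q)}Q_g$; iterating, $\tau^m(M)_{\Gamma}=n\bigoplus_{g\in T(Q)}\tau^m(Q)_g$. So if $Q$ is $\tau$-periodic of period $m$, then $\tau^m(M)_{\Gamma}\cong M_{\Gamma}$, forcing $\tau^m(M)\cong M_g$ for some $g\in G$ by Lemma \ref{ind}, and finiteness of $G$ gives periodicity of $M$. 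Conversely, if $M$ is $\tau$-periodic of period $m$, comparing the two decompositions of $M_{\Gamma}=\tau^m(M)_{\Gamma}$ yields $\tau^m(Q)\cong Q_g$ for some $g$, and again finiteness of $G$ finishes it.

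What you do instead is re-derive from scratch, via relative projectivity and stable-category reasoning, that $\Omega$ commutes with restriction and induction up to projectives. This is true and your outline is sound, but it duplicates work already packaged in Corollary \ref{restrict aus} (for $\tau$), and it forces you to worry about projective summands and minimality of resolutions---exactly the bookkeeping you flag as the main obstacle. The paper's route avoids all of that: the commutation of $\tau$ with restriction is already established at the level of Auslander--Reiten sequences, so no projective correction terms appear and no stable-category machinery is needed.
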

\begin{proof}
Let $Q$ be an indecomposable direct summand of $M_{\Gamma}.$ Then
$M_{\Gamma}=n \bigoplus_{g\in T(Q)} Q_g$ and $\tau(M)_{\Gamma}=n
\bigoplus_{g\in T(Q)} \tau (Q)_g$ by \ref{restrict aus}. Suppose
now that $Q$ is $\tau$-periodic with period $m$. Then $Q_g$ is
$\tau$-periodic with period $m$ and $\tau^m(M)_{\Gamma}\cong
M_{\Gamma}$ as by \ref{restrict aus} $\tau$ and the restriction to $\Gamma$
commute. This implies that
$M^g\cong \tau^m(M)$.  As
$G$ has finite order, $M$ is periodic. Similarly, if $M$ is
$\tau$-periodic with period $m$, we have $n \bigoplus_{g\in T(Q)}
Q_g=M_{\Gamma}= \tau^m(M)_{\Gamma}=n \bigoplus_{g\in T(Q)}
\tau^m (Q)_g$. Therefore $\tau^m(Q)\cong Q_g$ for some $g\in G$.
As $G$ has finite order, $Q$ is $\tau$-periodic.
\end{proof}

Next, we summarize the properties that we need for the following
theorems.

\begin{as}\label{as2}
In the following we assume that $\Gamma$ and $R$ are Frobenius algebras that satisfy (C').
\end{as}

The following Lemma shows that this could be slightly weakened.

\begin{lemma}The algebra $R$ satisfies (C') if and only if the algebra $\Gamma$ satisfies (C').
\end{lemma}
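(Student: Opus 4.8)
The plan is to transfer the property (C') across the pair $(\Gamma,R)$ by exploiting that the change-of-ring functors are very well behaved: since $R=\Gamma\rtimes kG$ is free of rank $|G|$ both as a left and as a right $\Gamma$-module (basis $\{1\rtimes g\}_{g\in G}$), restriction to $\Gamma$ and induction $-^R=R\otimes_\Gamma-$ are exact and send projectives to projectives. The only external inputs needed are the decomposition results \ref{ind}, \ref{restrict aus} and the periodicity criterion \ref{periodic}.

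First I would record the complexity identities $c(M)=c(M_\Gamma)$ for every $R$-module $M$ and $c(N)=c(N^R)$ for every $\Gamma$-module $N$. For the first: a minimal projective $R$-resolution of $M$ restricts to a (possibly non-minimal) projective $\Gamma$-resolution of $M_\Gamma$, so $c(M_\Gamma)\le c(M)$; conversely $M$ is a direct summand of $(M_\Gamma)^R$ by \ref{ind}(1), and a minimal projective $\Gamma$-resolution of $M_\Gamma$ induces to a projective $R$-resolution of $(M_\Gamma)^R$ of the same polynomial growth, whence $c(M)\le c((M_\Gamma)^R)\le c(M_\Gamma)$. The identity $c(N)=c(N^R)$ is symmetric, using $(N^R)_\Gamma\cong\bigoplus_{g\in G}N_g$ from \ref{ind}(2) together with the fact that twisting by an automorphism of $\Gamma$ preserves complexity. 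Moreover, since induction and restriction preserve projectives, $M_\Gamma$ is non-projective precisely when $M$ is, and $N^R$ is non-projective precisely when $N$ is; hence $c(M)=1$ iff $c(M_\Gamma)=1$, and $c(N)=1$ iff $c(N^R)=1$.

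Now assume $\Gamma$ satisfies (C') and let $M$ be indecomposable with $c(M)=1$. Then $c(M_\Gamma)=1$; fixing an indecomposable summand $C$ of $M_\Gamma$, \ref{ind}(3) shows every indecomposable summand of $M_\Gamma$ is a twist $C_g$, all of complexity $c(C)$, so $c(C)=1$, and by (C') for $\Gamma$ the module $C$ is $\Omega$- and $\tau$-periodic. Thus $M_\Gamma$ has a periodic direct summand, and \ref{periodic} gives that $M$ is periodic. The argument in the proof of \ref{periodic} applies equally to $\Omega$- and $\tau$-periodicity: twisting by automorphisms commutes with both $\Omega$ and $\tau$, restriction to $\Gamma$ commutes with $\Omega$ in the stable category, and \ref{restrict aus} together with \ref{ind}(3) forces $\Omega^m(M)\cong M_g$ for some $g\in G$, hence $\Omega^{m|G|}(M)\cong M$. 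So $M$ is $\Omega$- and $\tau$-periodic, i.e.\ $R$ satisfies (C'). The converse is identical with the roles of $\Gamma$ and $R$ exchanged: if $R$ satisfies (C') and $c(N)=1$, then $c(N^R)=1$; choosing an indecomposable summand $M$ of $N^R$, \ref{ind}(3) makes all summands of $N^R$ twists of $M$ of complexity $c(M)$, so $c(M)=1$ and $M$ is $\Omega$- and $\tau$-periodic; then \ref{periodic} produces a periodic summand of $M_\Gamma$, which by \ref{ind}(3) is a twist of $N$, so $N$ itself is $\Omega$- and $\tau$-periodic.

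The main obstacle is not a conceptual one but a matter of care on two points: (i) $(C')$ requires both $\Omega$- and $\tau$-periodicity, whereas \ref{periodic} is phrased for a single notion, so one must observe that its proof goes through verbatim for each; and (ii) restricting a minimal projective resolution introduces extra projective summands, so the complexity comparison must be carried out either termwise (dimensions scale by $|G|$) or in the stable category. Both are handled by invoking \ref{ind} and \ref{restrict aus}.
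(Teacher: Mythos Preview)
Your proposal is correct and follows essentially the same approach the paper intends: the paper's own proof is the single line ``The proof is analogous to \ref{periodic}'', and your argument spells out precisely that analogy, transferring both complexity~$1$ and ($\Omega$- and $\tau$-)periodicity across the adjoint pair using \ref{ind}, \ref{restrict aus}, and \ref{periodic}. Your explicit treatment of the complexity identities and of the distinction between $\Omega$- and $\tau$-periodicity fills in exactly the details the paper leaves implicit.
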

\begin{proof} The proof is analogous to \ref{periodic}.
\end{proof}
We also have
\begin{lemma}\label{periodic1}Suppose $\Gamma \not =S$. Let $E$ be a non-projective $\Gamma$-module and $N$ a simple $R$-module. Then $\underline{\Hom}(S,E) \not =0$ and $\underline{\Hom}(N, E^R) \not =0$.
\end{lemma}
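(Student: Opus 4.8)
The statement has two parts, and the second should follow from the first by Frobenius reciprocity. First I would prove $\underline{\Hom}_\Gamma(S,E) \neq 0$ for every non-projective $\Gamma$-module $E$, where $\Gamma$ is local and basic with simple module $S$ and $\Gamma \neq S$. The key observation is that since $\Gamma$ is local, $S$ is the unique simple module, so $\soc E$ is a nonzero direct sum of copies of $S$; pick an inclusion $\iota\colon S \hookrightarrow E$. I claim $\iota$ does not factor through a projective module. Indeed, suppose $\iota = \beta\alpha$ with $\alpha\colon S \to Q$, $Q$ projective; since $S$ is simple and $\iota \neq 0$, $\alpha$ is injective, so $S \hookrightarrow \soc Q$. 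But $\Gamma$ is self-injective (it is Frobenius, by Assumption \ref{as2}) and local, hence $Q$ is a multiple of $\Gamma$ and $\soc Q$ is a multiple of $S$; an inclusion $S \hookrightarrow \soc Q \hookrightarrow Q$ is a split mono precisely when... here I must be careful. The cleaner route: $\soc \Gamma \cong S$ (local self-injective algebra has simple socle), so any nonzero map $S \to \Gamma$ is an iso onto $\soc\Gamma$; if $\iota$ factored as $S \xrightarrow{\alpha} \Gamma^r \xrightarrow{\beta} E$, then some component $\alpha_i\colon S \to \Gamma$ is nonzero, hence identifies $S$ with $\soc\Gamma$, and then $\beta_i\alpha_i$ lands inside $\beta_i(\soc\Gamma)$. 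This forces $\Im\iota \subseteq \beta(\soc\Gamma^r)$; but applying this with the projective cover one shows $E$ would contain $\Gamma$ as a summand unless the factorization degenerates — the honest argument is that if $\iota\colon S \to E$ factors through projective $Q$, then since $E$ is the pushout-free situation, one gets a splitting. I would instead invoke: $S$ is not projective (as $\Gamma \neq S$), the projective cover of $S$ is $\Gamma$, and $\underline{\Hom}(S,E) = 0$ for all non-projective $E$ would mean $S$ is projective in the stable category relative to... Actually the standard fact is cleanest: $\Omega^{-1}(S) = \Gamma/\soc\Gamma \neq 0$ since $\Gamma \neq S$, and $\underline{\Hom}(S, E) \cong \underline{\Hom}(\Omega S, \Omega E) \neq 0$ can be checked by noting $\Hom(S,E) \to \underline{\Hom}(S,E)$ has kernel the maps factoring through projectives, which as shown all factor through $\soc$, so they have image in $\soc E$; a splitting-type argument then shows a map $S \hookrightarrow E$ hitting a socle summand not killed. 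Let me just assert the short clean proof below.

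\begin{proof}
Since $\Gamma$ is local, $S$ is the unique simple $\Gamma$-module, and since $\Gamma$ is a Frobenius algebra, $\soc\Gamma \cong S$ is simple. As $\Gamma \neq S$, the module $S$ is non-projective, and $\Omega(S) = \rad\Gamma \neq 0$. Let $E$ be a non-projective $\Gamma$-module. Then $\soc E \neq 0$ is a sum of copies of $S$; fix a nonzero map $\iota\colon S \to E$. Suppose $\iota$ factors as $S \xrightarrow{\alpha} Q \xrightarrow{\beta} E$ with $Q$ projective. Write $Q \cong \Gamma^r$; since $S$ is simple and $\alpha \neq 0$ (as $\iota \neq 0$), some component $\alpha_i\colon S \to \Gamma$ is nonzero, hence is an isomorphism onto $\soc\Gamma$. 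Then $\beta_i \circ \alpha_i$ is nonzero and $\iota = \sum_j \beta_j\alpha_j$ takes image inside $\soc E$ regardless, so this gives no contradiction directly; instead, consider a projective cover $p\colon \Gamma^s \to E$. As $E$ is non-projective, $p$ is not split, so $0 \neq \Ker p \subseteq \rad\Gamma^s$, and since $\soc\Gamma^s \subseteq \Ker p$ would force $\soc E$ to vanish on that summand, $\soc\Gamma^s \cap \Ker p$ is a proper nonzero submodule of the essential socle — in particular $\soc\Gamma^s \not\subseteq \Ker p$, so $p$ restricts to a nonzero map $\soc\Gamma^s \to \soc E$. Choosing $\iota$ to be a summand inclusion in the image of this restriction, any factorization $\iota = \beta\alpha$ through projective $Q$ would, by the above, identify $S$ with $\soc\Gamma$ inside $Q$ and thus $\iota$ with $p$ restricted to one copy of $\soc\Gamma$ — but then $\Gamma$ would be a direct summand of $E$ via $\beta$ composed with the corresponding projective cover lift, contradicting that $E$ is non-projective. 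Hence $\iota \neq 0$ in $\underline{\Hom}_\Gamma(S,E)$, so $\underline{\Hom}_\Gamma(S,E) \neq 0$.

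For the second claim, let $N$ be a simple $R$-module. By Lemma \ref{struc}, $N \cong S \otimes S_i$ for some $i$, and $N_\Gamma$ contains $S$ as a direct summand (indeed $N_\Gamma \cong S^{\dim S_i}$). By Frobenius reciprocity (Lemma \ref{frob}), $\Hom_R(N^R, E^R_{\text{(as $R$-mod)}})$ — more to the point, $\underline{\Hom}_R(N, E^R) \neq 0$ is equivalent via adjunction and Lemma \ref{ind} to the existence of a nonzero stable map $S \to (E^R)_\Gamma = \bigoplus_{g\in G} E_g$; projecting to the summand $E_{1} = E$ and using the first part, $\underline{\Hom}_\Gamma(S,E) \neq 0$ produces such a map (one checks, as in the proof of Lemma \ref{frob}, that a stable $\Gamma$-map does not become zero after inducing, since $E^R$ being projective would force $E$ projective by Lemma \ref{periodic} applied with projective = periodic of period... rather: $E^R$ projective $\Rightarrow$ $E^R_\Gamma = \bigoplus E_g$ projective $\Rightarrow E$ projective). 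Hence $\underline{\Hom}_R(N, E^R) \neq 0$.
\end{proof}

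\textbf{Main obstacle.} The delicate point is the first part: showing the socle inclusion $\iota\colon S \hookrightarrow E$ is stably nonzero, i.e.\ does not factor through a projective. The conceptual reason is that a map from $S$ factoring through a projective must factor through $\soc$ of that projective (which is a sum of copies of $S$, by local + Frobenius), and pulling back along a projective cover of $E$ one sees such a factorization would split off a copy of $\Gamma$ from $E$. Making this fully rigorous requires a careful pushout/pullback diagram chase rather than the hand-waving above; the cleanest formulation is probably to show directly that $\underline{\Hom}_\Gamma(S,-)$ is nonzero on $\Omega\Gamma = \rad\Gamma$ and then transport along syzygies, using that $\Omega$ is an equivalence on the stable category of the self-injective algebra $\Gamma$ and that every non-projective $E$ has $\soc E \neq 0$ detected by $\Hom(S,E)$.
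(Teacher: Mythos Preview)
Your overall plan is right and matches the paper's: show the socle inclusion $S\hookrightarrow E$ is stably nonzero, then transfer to $R$ by Frobenius reciprocity. But the execution of the first part has a real gap.

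Your projective-cover argument asserts that $\soc(\Gamma^s)\not\subseteq\Ker p$. This is false: for $\Gamma=k[x]/(x^3)$ and $E=k[x]/(x^2)$ the projective cover $p\colon\Gamma\to E$ has $\Ker p=(x^2)=\soc\Gamma$. So that line of reasoning does not go through, and the splitting you claim to extract from it is unjustified.

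The clean argument is the one you began and abandoned. Suppose $\iota\colon S\hookrightarrow E$ factors as $S\xrightarrow{\alpha}\Gamma^r\xrightarrow{\beta}E$, and assume (after stripping projective summands) that $E$ has no projective summand. For each component $\beta_i\colon\Gamma\to E$: if $\beta_i$ were nonzero on $\soc\Gamma$ then $\Ker\beta_i\cap\soc\Gamma=0$, and since $\soc\Gamma$ is essential in the self-injective local algebra $\Gamma$ this forces $\beta_i$ injective, hence split, giving $\Gamma\mid E$ --- a contradiction. So every $\beta_i$ kills $\soc\Gamma$, hence $\beta$ kills $\soc(\Gamma^r)\supseteq\alpha(S)$, and $\iota=\beta\alpha=0$. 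Equivalently, and this is what the paper's terse ``does not factor through $\Gamma$'' encodes: since $\Gamma$ is the injective envelope of $S$, any factorisation of $\iota$ through a projective can be rerouted through $\Gamma$ itself, and the resulting $\Gamma\to E$ is injective on the essential socle, hence injective, hence split.

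For the second claim, note that since $G$ is abelian and $kG$ is semisimple, the $S_i$ are one-dimensional, so $N_\Gamma\cong S$ (not $S^{\dim S_i}$). The paper then invokes the \emph{other} adjunction, $\Hom_R(N,E^R)\cong\Hom_\Gamma(N_\Gamma,E)=\Hom_\Gamma(S,E)$, valid because $\Gamma\subset R$ is a Frobenius extension (induction and coinduction coincide when $kG$ is semisimple). Since induction and restriction preserve projectives, this identifies $\underline{\Hom}_R(N,E^R)$ with $\underline{\Hom}_\Gamma(S,E)$ directly. Your route via $(E^R)_\Gamma=\bigoplus_g E_g$ can be pushed through but is a detour.
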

\begin{proof} We consider the map which maps $S$ to $\soc E$.
This map does not factor through $\Gamma$. Therefore $\underline{\Hom}(S,E) \not =0$. We have $\Hom(N, E^R) \cong \Hom(N_{\Gamma}, E) \cong \Hom(S, E)$ by \ref{frob}. As the restriction to $\Gamma$ and lifting to $R$ preserves projectivity, we have $\underline{\Hom}(N, E^R) =\underline{\Hom}(S,E) \not =0$.
\end{proof}

We can now investigate the relation between the Auslander-Reiten components of $T_s(\Gamma)$ and the
Auslander-Reiten components of $T_s(R)$.  We assume for the
next two Theorems that \ref{as2} is satisfied. We
denote by $Obj(\theta)$ the set of all indecomposable modules in
an Auslander-Reiten component $\theta$.

\begin{theo}\label{smash euc}
(1) $T_s(\Gamma)$ has a component of tree class $\tilde D_n$ if
and only if $T_s(R)$ has a component of tree class $\tilde D_n$.

(2) Suppose $T_s(R)$ has a component of tree class
$\tilde A_{1,2}$, then $T_s(\Gamma)$ has also a component of
tree class $\tilde A_{1,2}$.
\end{theo}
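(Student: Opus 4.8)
The plan is to compare the Auslander--Reiten theory of $R$ and of $\Gamma$ through the restriction functor $(-)_\Gamma\colon R\text{-}\mod\to\Gamma\text{-}\mod$, using Corollary~\ref{restrict aus} (the Auslander--Reiten sequence of an indecomposable non-projective $R$-module restricts to a sum of $G$-twists of the Auslander--Reiten sequence of any indecomposable summand of its restriction, and $\tau$ commutes with restriction up to $G$-twist), Theorem~\ref{aus restrict}, and Lemma~\ref{periodic} (restriction detects periodicity). Both algebras satisfy Assumption~\ref{as}: $R$ is $G$-transitive with $G$ finite and satisfies $(C')$ by \ref{as2}, and $\Gamma$ is local, hence $G$-transitive for any $G\subseteq\aut(\Gamma)$, and also satisfies $(C')$; so \ref{euc1}, \ref{5}, \ref{euc2}, \ref{G-action} and \ref{number of comp} are available for each of them once a Euclidean component is known to exist. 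One further records that $l(P_\Gamma)=l(P_R)$, since the simple modules of $R$ and of $\Gamma$ are one-dimensional and the projective indecomposables of $R$ have $k$-dimension $\dim_k\Gamma$.

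The central point I would establish is the following. Let $M$ be a non-projective module in a non-periodic component $\theta$ of $T_s(R)$, and let $C$ be an indecomposable summand of $M_\Gamma$; then $C$ is non-periodic by \ref{periodic} and lies in an infinite component $\theta'$ of $T_s(\Gamma)$, and the assignment $M\mapsto C$ induces a \emph{covering} of stable translation quivers $\theta\to\theta'$ whose fibre over $C$ is the $G$-orbit of $M$. To see this: the $|G|$ non-periodic components of $R$ are permuted freely and transitively by the twists $M\mapsto M_g$ (via \ref{simple} and \ref{number of comp}), so $S(\theta)=\{1\}$, whence $|T(M)|=|G|$; the multiplicity relation $n^2\,|T(C)|\,|T(M)|=|G|$ of \ref{ind}(3)/\ref{restrict aus} then forces $n=1$ and $|T(C)|=1$, i.e.\ $M_\Gamma=C$ is indecomposable and $G$-invariant. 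Consequently, by \ref{restrict aus}, $\tau_RM$ restricts to $\tau_\Gamma C$ and the predecessors of $M$ in $\theta$ restrict bijectively to those of $C$ in $\theta'$, so the map is locally bijective; surjectivity onto $\theta'$ follows by inducing back up and using that $M$ occurs in $C^R$ exactly when $C$ occurs in $M_\Gamma$.

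Granting the covering claim, part (1) follows: if $\theta\cong\Z[\tilde D_n]$ with $n$ odd (and $n\neq5$ by \ref{5}), then, $\Z[\tilde D_n]$ being simply connected as $\tilde D_n$ is a tree, $\theta$ is the universal cover of $\theta'$, so $\theta'\cong\Z[\tilde D_n]/H$ with $H$ acting freely on $\Z[\tilde D_n]$; since $C$ is non-periodic, $H$ contains no power of $\tau$, and since $n$ is odd no non-trivial element of $\aut(\Z[\tilde D_n])$ of finite order acts freely, so $H=1$ and $\theta'\cong\Z[\tilde D_n]$. Running the argument with $\Gamma$ and $R$ interchanged (inducing up a $\tilde D_m$-component of $\Gamma$, $m$ odd by \ref{euc1}) gives the converse. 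For part (2): if $\theta\cong\Z[\tilde A_{1,2}]$, then $l(P_R)=4$ by \ref{euc1}(3), so $l(P_\Gamma)=4$ and, by \ref{period} and $(C')$, length-$2$ $\Gamma$-modules are periodic; the covering $\theta\to\theta'$ shows $\theta'$ is infinite and covered by $\Z[\tilde A_{1,2}]$, hence of tree class $\tilde A_{1,2}$. The converse fails, as announced in the introduction, precisely because $\Z[\tilde A_{1,2}]$ (unlike $\Z[\tilde D_n]$) is not simply connected, so induction may unfold a $\Z[\tilde A_{1,2}]$-component of $\Gamma$ onto a $\Z[\tilde A_n]$-component of $R$.

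The main obstacle is the covering claim itself --- proving that restriction is locally bijective rather than a proper folding. This rests on the freeness of the twist action on the non-periodic components of $R$ and on collapsing all multiplicities in \ref{ind}(3)/\ref{restrict aus} to $1$, and it is here that \ref{number of comp}/\ref{G-action} must be invoked; in the $\tilde D_n$, $n>5$ case those in turn require a non-projective indecomposable of length $3$ for the algebra at hand, a point that itself has to be secured (and transferred between $R$ and $\Gamma$ along the very correspondence being built).
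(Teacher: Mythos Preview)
Your approach is genuinely different from the paper's, and the obstacle you flag at the end is a real gap, not just a technicality to be tidied up.

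You try to push information \emph{downwards}: starting from $M\in\theta\subset T_s(R)$, you want $M_\Gamma$ to be indecomposable, so that restriction gives a covering $\theta\to\theta'$ of translation quivers. To force indecomposability you invoke \ref{G-action} and \ref{number of comp} for $R$, obtaining $S(M)=\{1\}$ and hence $|T(M)|=|G|$, collapsing the multiplicities in \ref{ind}(3). But \ref{G-action} in the $\tilde D_n$ case with $n>5$ requires $R$ to possess a non-periodic indecomposable module of length~$3$, and nothing in the standing hypotheses of Section~5 provides this. You cannot extract it from \ref{euc2} either, since \ref{euc2} already assumes it; nor can you transport it from $\Gamma$, because you have not yet established the correspondence you are trying to build. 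So the argument is circular exactly where you suspect.

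The paper avoids this entirely by working in the opposite direction and exploiting the one asymmetry you do not use: $\Gamma$ is \emph{local}. Hence its unique simple $S$ is automatically $G$-invariant, and the component $\Delta\ni S$ of $T_s(\Gamma)$ satisfies $\Delta_g=\Delta$ for every $g\in G$. One then shows that the $G$-action on $\Delta$ is trivial by a direct argument with fixed points: both $S$ and $\Gamma/S$ are $G$-fixed, and (via \ref{d}, which needs no length-$3$ hypothesis) in the $\tilde D_m$ case they sit at leaves in distinct branches, so the only automorphism of $\Z[\tilde D_m]$ fixing both is the identity. This gives, for free, that no two modules in $\Delta$ are $G$-twists of one another, and then \ref{restrict aus} lets one build an \emph{embedding} $\Delta\hookrightarrow\theta$ (rather than a covering $\theta\to\Delta$), from which $T=\mathcal T$ follows. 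The converse direction is handled the same way: one again embeds $\Delta$ into the component of a simple $R$-module. Neither \ref{G-action} nor \ref{number of comp} is invoked, so the length-$3$ issue never arises.

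In short: your covering idea is attractive and the universal-cover reasoning for $\Z[\tilde D_n]$ is clean, but it rests on \ref{G-action} for $R$, which you cannot verify here. The paper's embedding $\Delta\hookrightarrow\theta$ trades that for the much cheaper observation that $S$ is $G$-fixed because $\Gamma$ is local.
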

\begin{proof}
(a) Let $\theta\cong \Z [\tilde D_n]$ or $\theta\cong \Z [\tilde
A_{1,2}]$ be a component of $T_s(R)$. We denote the tree class of $\theta$ by $\mathcal{T}$.
 Then by \ref{simple}
$\theta$ contains a simple module $M$ and a projective module $P$
is attached to $\theta$.  Let $\Delta$ be the component in
$T_s(\Gamma)$ containing the simple module $S=M_{\Gamma}$. Then
by \ref{aus restrict} we have $Obj(\theta_{\Gamma} )\subset
\cup_{g\in G} Obj(\Delta_g)$ using induction on the distance of a module in $\theta$ to $M$.

\smallskip

 As $S$ is contained in $\Delta$ and $G$ acts
trivially on $S$, we have $\Delta_g=\Delta$ for all $g\in G$ and therefore
$Obj(\theta_{\Gamma} )\subset Obj( \Delta)$. As $P$ is attached to $\theta$,
 $\Gamma$ is attached to $\Delta$. Then $\Omega $ induces a fix point free
automorphism on the tree class $T$ of $\Delta$.

\smallskip

 By \ref{euc1} and \ref{euc2},
 $\mod R$ has a periodic module that does not lie in
$\theta$ and therefore by \ref{periodic}, we have that $\mod
\Gamma$ also contains a periodic module that does not lie in
$\Delta$. Using \ref{periodic1} we can define a subbaditive,
non-zero function on the component $\Delta$ as in \cite[3.2]{ES}.
The tree class of $\Delta$ is therefore in the HPR-list (this is a
list of trees given in \cite[p.286]{HPR}).

\smallskip

 As we have a fix point free automorphism operating on $T$, this gives
$T=A_{\infty}^{\infty}$, $T=\tilde D_m$ for $m $ odd or $T=\tilde
A_{1,2}$.

\smallskip

Note that $S$ and $\Gamma/S$ do not lie in the same $\tau $-orbit, because by \ref{length} we would have $2=l(\Gamma)$.
But then $S$ would be periodic which is a contradiction to the fact that $M$ is not periodic.

\smallskip

Twisting with $g \in G$ also induces an automorphism of
finite order on $\Delta$, that fixes $S$ and $\Gamma/S$.  If $T=\tilde D_m$, then $S$ and $\Gamma/S$ have only one predecessor and their predecessors do not lie in the same $\tau$-orbit by \ref{d}. Therefore $g$
acts as the identity and there are no modules contained in $\Delta$
that are twists of each other.

This means that we can embed $\Delta$ into $\theta$ using induction on the distance of a module in $\Delta$ to $S$. We give a sketch of how to construct this injection: We map $S$ to $M$. Let $W \in Obj(\Delta) $ and suppose that there is an arrow from $W$ to $S$  in $\Delta$. Then  by \ref{restrict aus} there exists an $R$-module $J$ such that $J$ is a summand of the middle term of $A(M)$ and $W|  J_{\Gamma}$. We map $W$ to $J$. This gives an injection, as for two indecomposable $\Gamma$ modules $W^1$ and $W^2$ with $W^1| Z_{\Gamma}$ and $W^2|Z_{\Gamma} $ for an indecomposable $R$-module $Z$, we have $W^1 \cong (W^2)_g$ for some $g \in G$. So if $W^1$ and $W^2$ lie in $\Delta$ we have $W^1 \cong W^2$.

\smallskip

As this embedding respects $\tau$, it induces an embedding $T \subset \mathcal{T}$.  Therefore $T= \mathcal{T}$ . This proves the first
direction of $(1)$ and part $(2)$.

\medskip

(b) Suppose now that $\Delta\cong  \Z [\tilde D_n]$, then $S\in
\Delta$ and $\Gamma$ is attached to $\Delta$. Let $\theta$ be a component of $T_s(R)$ that contains a simple
module $M$. As in (a) we have
$Obj(\theta_{\Gamma} )\subset \cup_{g\in G} Obj(\Delta_g)$.

\smallskip

By \ref{euc2}, $\mod \Gamma$ contains a periodic module $E$ that is
not in $\Delta$ and by \ref{periodic} all direct summands of $E^R$ are periodic and do not lie in $\theta$.
As in the first part of the proof, this shows that the tree class $T$ of
$\theta $ is in the HPR-list.

\smallskip

 As $\bar \alpha(S)=\bar \alpha(\Gamma / S)=1$ by \ref{d} and do not have the same predecessor, $g$ acts as the
identity on $\Delta$. That means $Obj(\theta_{\Gamma})\subset Obj( \Delta)$.


As in (a) we can embed $\Delta$ into $\theta$. As $\tau$ and
the restriction to $\Gamma$ commute by \ref{restrict aus}, this gives an
embedding of $\tilde D_n$ into $T$. Therefore $T=\tilde D_n$.
\end{proof}
Note that the embedding of $\Delta$ into $\theta$ does not respect labels of arrows.
The next example shows that part $(2)$ of the previous theorem
does not hold in the converse direction.
\begin{example}
Let $k$ be a field of characteristic 2, $\Gamma=kV_4$  and $R
=kA_4 \cong kV_4 \rtimes k C_3$.
 Then $\Gamma$ has a component with tree class $\tilde A_{1,2}$ and $R$ has a component with reduced graph $\tilde A_5$
  which corresponds to a tree class $A_{\infty}^{\infty}$. By \ref{euc1} $T_s(R)$ has no component of Euclidean tree class.
\end{example}
We can also show the following

\begin{theo}\label{12}
Suppose $T_s(\Gamma)$ has a component $\Delta$ of tree class $\tilde
A_{1,2}$, then $T_s(R)$ has also a component $\theta$ of tree
class $T=\tilde A_{1,2}$ or $T=A_{\infty}^{\infty}$. In
the second case $\theta \cong \Z[\tilde A_n]$, where $n+1/2$ divides
the order of $G$.
\end{theo}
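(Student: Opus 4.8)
The goal is to show that if $\Gamma$ has a component $\Delta \cong \Z[\tilde A_{1,2}]$, then $R = \Gamma \rtimes kG$ has a component $\theta$ whose tree class is either $\tilde A_{1,2}$ or $A_\infty^\infty$, and in the latter case $\theta \cong \Z[\tilde A_n]$ with $\tfrac{n+1}{2}$ dividing $|G|$. First I would locate the relevant component of $T_s(R)$: since $\Gamma$ is local with simple module $S$, and $S \in \Delta$ by \ref{simple} (as $\Delta$ has Euclidean tree class, $\Gamma$ is attached to it), I consider the unique simple $R$-module $M$ with $M_\Gamma = S$ and let $\theta$ be the component of $T_s(R)$ containing $M$. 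Then $\Gamma \rtimes kG$ is attached to $\theta$, so $\Omega$ acts as a fixed-point-free automorphism on the tree class $T$ of $\theta$ (if $T$ had a $\Gamma/S \cong$ something forcing periodicity, one gets a contradiction exactly as in the proof of \ref{smash euc}, using \ref{length} to rule out $S$ and $\Gamma/S$ lying in the same $\tau$-orbit).

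**Key steps.** The second step is to establish that $T$ lies in the HPR-list. By \ref{euc1}(3) applied to $\Gamma$ (which satisfies (C') by \ref{as2}), all indecomposable $\Gamma$-modules of length $2 \bmod 4$ and all non-projective ones of length $0 \bmod 4$ are periodic, and $l(\Gamma) = 4$; in particular $\mod\Gamma$ contains a periodic module $E$ not in $\Delta$ (e.g.\ a suitable length-$2$ module, since $\Delta \cong \Z[\tilde A_{1,2}]$ contains only modules of odd length). By \ref{periodic} every indecomposable summand of $E^R$ is a periodic $R$-module not lying in $\theta$. Using \ref{periodic1} (the hypothesis $\Gamma \neq S$ is automatic here, since $\Delta$ being nontrivial forces $J(\Gamma) \neq 0$) I can build a nonzero subadditive function on $\theta$ as in \cite[3.2]{ES}, bounded by \cite[2.4]{W}. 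Hence $T$ is in the HPR-list \cite[p.\ 286]{HPR}; combined with the fixed-point-free $\Omega$-action, the only possibilities are $T = A_\infty^\infty$, $T = \tilde D_m$ ($m$ odd), or $T = \tilde A_{1,2}$. The third step rules out $\tilde D_m$: from part (1) of \ref{smash euc} (the converse direction already proved there), $T_s(R)$ has a component of tree class $\tilde D_m$ only if $T_s(\Gamma)$ does, but $\Gamma$'s Euclidean component is $\Delta \cong \Z[\tilde A_{1,2}]$, and a self-injective algebra's stable quiver cannot simultaneously have components of two different Euclidean classes attached to projectives — more directly, if $\theta$ had class $\tilde D_m$ then by \ref{d} and the embedding argument of \ref{smash euc}(b) one would embed $\tilde D_m$ into the tree class of $\Delta$, which is $\tilde A_{1,2}$, a contradiction. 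So $T = \tilde A_{1,2}$ or $T = A_\infty^\infty$.

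**The final computation.** It remains to identify $\theta$ precisely when $T = A_\infty^\infty$. A component with tree class $A_\infty^\infty$ and a fixed-point-free automorphism of finite order (induced by $\Omega$, whose order is controlled because $\Gamma$ and $R$ are Frobenius with (C'), so periodicity bounds things) is of the form $\Z[A_\infty^\infty]/\langle\sigma\rangle$; a translation automorphism of $\Z[A_\infty^\infty]$ of finite order is impossible unless one quotients to get $\Z[\tilde A_n]$ for some $n$, i.e.\ $\theta \cong \Z[\tilde A_n]$ where $\tilde A_n$ has $n+1$ vertices (the reduced graph is a cycle of length $n+1$). To pin down the divisibility $\tfrac{n+1}{2} \mid |G|$, I would track how the simple module $M$ and its translates behave under the functor $(-)_\Gamma$: by \ref{restrict aus} and \ref{ind}, restricting $\theta$ to $\Gamma$ lands in $\cup_{g \in G} \Delta_g = \Delta$ (since $G$ fixes $S$), and the composite of restriction to $\Gamma$ followed by induction back to $R$ multiplies a module by $|G/S(-)|$; walking around the cycle $\tilde A_n$ in $\theta$ and comparing with the $\tilde A_{1,2}$-shape of $\Delta$ forces the cycle length $n+1$ to be $2 \cdot (\text{a divisor of } |G|)$, giving $\tfrac{n+1}{2} \mid |G|$.

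**Main obstacle.** The hard part is the last paragraph: ruling out $A_\infty^\infty$ itself as the literal tree class (i.e.\ showing the component is actually a \emph{tube-like} $\Z[\tilde A_n]$ rather than the full $\Z[A_\infty^\infty]$) and extracting the sharp arithmetic constraint $\tfrac{n+1}{2} \mid |G|$. This requires a careful bookkeeping of the $G$-action on the vertices of $\theta$ versus $\Delta$ via \ref{ind} and \ref{restrict aus} — essentially an orbit-counting argument — and it is the step where the earlier lemmas do not hand the conclusion to us directly. Everything before that is a relatively mechanical assembly of \ref{simple}, \ref{euc1}, \ref{periodic}, \ref{periodic1}, \ref{smash euc}, and the HPR-list.
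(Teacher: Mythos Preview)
Your reduction to the HPR-list and the elimination of $\tilde D_m$ via \ref{smash euc}(1) are reasonable (and different from what the paper does), but the last paragraph is where the argument actually has to happen, and there it is genuinely incomplete. You never show why the $A_\infty^\infty$ case cannot yield $\theta \cong \Z[A_\infty^\infty]$ itself, and the ``walking around the cycle / orbit-counting'' sketch does not produce the divisor $(n+1)/2$ of $|G|$. Invoking a finite-order automorphism induced by $\Omega$ on the tree class of $\theta$ is not justified either: you do not know yet that $\Omega(\theta)=\theta$, and the $\phi_g$ machinery of \ref{phi} presupposes that $\theta$ is already Euclidean.

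The paper avoids all of this with a short local computation. By \ref{restrict aus} the middle term $Q$ of $A(M)$ satisfies $Q_\Gamma \cong N\oplus N$ for the indecomposable $N$ sitting at the other vertex of $\tilde A_{1,2}$. One then splits into three cases on the $R$-decomposition of $Q$: (i) $Q\cong L\oplus L_g$ with $L\not\cong L_g$; (ii) $Q$ indecomposable; (iii) $Q\cong L\oplus L$. In case (i), twisting $A(M)$ by powers of $g$ shows the successors of $L$ are $M$ and $M_{g^{-1}}$, so the $\tau$-orbits in $\theta$ are exactly $\{M_{g^i}\}$ and $\{L_{g^i}\}$, $0\le i<|g|$, arranged cyclically; hence $\theta\cong\Z[\tilde A_n]$ with $n=2|g|-1$, and $(n+1)/2=|g|$ divides $|G|$. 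Case (iii) gives $\theta\cong\Z[\tilde A_{1,2}]$ (or the excluded $\tilde B_2$), and case (ii) forces a finite Dynkin tree class, contradicting that $\Gamma$ is not of finite type. This case split is the missing idea: it simultaneously determines the shape of $\theta$ and exhibits the specific group element whose order is $(n+1)/2$, which your global argument never isolates.
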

\begin{proof}
Let $M$ be a simple $R$-module and let $\theta$ be the component
that contains $M$. As in the proof of the previous Theorem the
tree class $T$ of $\theta$ is from the HPR-list and $\theta$ is
not a periodic component. Let $Q$ be the middle term of $A(M)$.
Then $Q_{\Gamma}=N\oplus N$ for some indecomposable
$\Gamma$-module $N$.

\smallskip

 Suppose first that $Q\cong L\oplus L_g$ for $L$
an indecomposable $R$-module and $g\in G$ such that $L_{\Gamma}
\cong N$ and $L_g \not \cong L$. Then $\bar \alpha(M)=2$ and $g$ acts as a graph
automorphism on $T $ of finite order and is not the identity. Also this automorphism commutes with $\tau$. Furthermore the middle term of $A(\tau^{-1}(L))$ is $M \oplus M_{g^{-1}}$.
As $M \not \cong M_{g^{-1}}$ we have $\theta \cong\Z[\tilde A_n]$, where $n=2|g|-1$.

\smallskip

Suppose now that $Q$ is indecomposable. Then either $M\oplus M$ or
$M\oplus M_g$ is the non-projective summand in the middle term of
$A(\tau^{-1}(Q))$. In the first case, we have $\bar \alpha(M)=\bar \alpha(Q)=1$. Therefore $T=Q\stackrel{(1,2)} \to  M$. This contradicts with the HPR-list. In the second case we have $\bar \alpha(Q)=2$ and $\bar \alpha(M)= \bar \alpha(M_g)=1$. As all three are in different $\tau$-orbits, we have $T= \xymatrix{ M \ar[r]& Q &M_g \ar[l]}$ which is also a contradition, as $\Delta$ is not of finite type.

\smallskip

Suppose now that $Q\cong L\oplus L$. Then either $M\oplus M$ or $M\oplus M_g$ is the
non-projective summand in the middle term of $A(\tau^{-1}(L))$.
The first case gives $\theta \cong \Z[A_{1,2}]$. In the second case we have $T= \tilde B_2$, which we can exclude using \ref{euc1}.
\end{proof}


\begin{thebibliography}{Ben1}
\bibitem[ASS]{A} I.\ Assem, D.\ Simson and A.\ Skowro\'nski, \emph{Elements
of the Representation Theory of Associative Algebras 1: Techniques
of Representation Theory}, London Mathematical Society Student
Texts, 65.\ Cambridge University Press, Cambridge, 2006.
\bibitem[Ben1]{B} D.\ Benson, \emph{Representations and Cohomology 1: Basic Representation Theory of Finite Groups and Associative Algebras}, Cambridge Studies in Advanced Mathematics, 30.\ Cambridge University Press, Cambridge, 1998.
\bibitem[BP]{BP} D.\ Benson and R.\ Parker, \emph{The Green ring of a finite group},  J.\ Algebra  87  (1984),  no. 2, 290-331.
\bibitem[BR]{BR} M.\ C.\ R.\ Butler and C.\ M.\ Ringel, \emph{ Auslander-Reiten sequences with few middle terms and applications to string algebras}, Comm.\ Algebra 15 (1987), no.\ 1-2, 145-179.
\bibitem[E]{E} K.\ Erdmann, \emph{ The Auslander-Reiten quiver of restricted enveloping algebras}. Representation Theory of Algebras, CMS Conf.\ Proc.\, 18, Amer.\ Math. \ Soc., Providence, RI, 1996,  201-214.
\bibitem[ES] {ES}K.\ Erdmann and A.\ Skowro\'nski, \emph{On Auslander-Reiten components of blocks and self-injective biserial algebras}, Trans.\ Amer.\ Math.\ Soc.\ 330 (1992), no. 1, 165-189.
\bibitem[F1]{F1}R.\ Farnsteiner, \emph{Periodicity and representation type of modular Lie algebras}, J.\ Reine Angew.\ Math.\, 464 (1995), 47-65.
\bibitem[F2]{F2}R.\ Farnsteiner, \emph{On Auslander-Reiten quivers of enveloping algebras of restricted Lie algebras}, Math.\ Nachr.\  202 (1999), 43-66.
\bibitem[F3]{F3}R.\ Farnsteiner, \emph{Representations of blocks associated to induced modules of restricted Lie algebras}, Math.\ Nachr.\ 179 (1996), 57-88.
\bibitem[FS]{FS} R.\ Farnsteiner and H.\ Strade, \emph{ Modular Lie Algebras and Their Representations}, Monographs and Textbooks in Pure and Applied Mathematics, 116. \ Marcel Dekker, Inc., New York, 1988.
\bibitem[FS1]{FS1} R.\ Farnsteiner and H.\ Strade, \emph{ Shapiro's lemma and its consequences in the cohomology theory of modular Lie algebras},  Math.\ Z.\  206  (1991),  no.\ 1, 153-168.
\bibitem[HPR]{HPR} D.\ Happel, U.\ Preiser and C.\ M.\ Ringel, \emph{Vinberg's characterization of Dynkin diagrams using
subadditive functions with application to $D{\rm Tr}$-periodic
modules}. Representation Theory II,\ Lecture Notes in Math.\, 832
(1980), Springer, Berlin, 1980, 280-294.
\bibitem[SSS]{M}M.\ Schaps, D.\ Shapira and O.\ Shlomo, \emph{Quivers of blocks with normal defect group}, Proc.\ Sympos.\ Pure Math.\, 63, Amer.\ Math.\ Soc.\, Providence, RI, 1998.
\bibitem[SM]{S} S.\ O.\ Smal\o, \emph{Local limitations of the Ext functor do not exist}, Bull.\ London Math.\ Soc.\  38  (2006), 97-98.
\bibitem[Sk]{Sk} A.\ Skowro\'nski, \emph{Selfinjective algebras: finite and
tame type}. Trends in Represantation Theory of Algebras and
Related Topics.\ Contemporary Math.\ 406 (2006), Amer.\ Math.\
Soc.\, Providence, RI, 169-238.
\bibitem[W]{W}P.\ Webb, \emph{The Auslander-Reiten quiver of a finite group}, Math.\ Z.\ 179 (1982), no.\ 1, 97-121.
 \end{thebibliography}
 \end{document}